\documentclass[preprint,12pt]{elsarticle}



\usepackage{amssymb}


\usepackage[utf8]{inputenc}
\usepackage{amsmath}
\usepackage{amsfonts}
\usepackage{amssymb}
\usepackage{graphicx}
\usepackage{mathabx}
\usepackage{mathtools}
\usepackage{breqn}
\usepackage{mathrsfs}
\usepackage{bm}
\usepackage{amsmath,amsfonts,amssymb,amsthm,epsfig,epstopdf,url,array}
\usepackage[font=small,labelfont=bf]{caption}%
\usepackage{hyperref}
\usepackage{subcaption}
\usepackage{tikz}
\usetikzlibrary{matrix}
\usepackage{cleveref}

\newtheorem{mydef}{Definition}[subsection]
\newtheorem{prop}{Proposition}[subsection]
\newtheorem{thm}{Theorem}[subsection]
\newtheorem{lem}{Lemma}[subsection]
\newtheorem{cor}{Corollary}[subsection]
\newtheorem{ex}{Example}[subsection]
\newtheorem{cond}{Condition}[subsection]
\newtheorem*{remark}{Remark}

\newcommand{\R}{\mathbb{R}}
\newcommand{\Q}{\mathbb{Q}}
\newcommand{\Z}{\mathbb{Z}}
\newcommand{\WH}{\widetilde{H}}
\newcommand{\booktitle}[1]{\textit{#1}}

\DeclareMathOperator*{\im}{im}
\DeclareMathOperator*{\rk}{rk}
\DeclareMathOperator*{\wt}{wt}
\DeclareMathOperator*{\supp}{supp}

\DeclareMathOperator*{\Torsion}{Torsion}

\journal{XXX}

\begin{document}

\begin{frontmatter}



\title{Winding number and Cutting number of Harmonic cycle}


\author[1]{Younng-Jin Kim}
\ead{sptz@snu.ac.kr}
\author[1]{Woong Kook}
\ead{woongkook@snu.ac.kr}

\address[1]{Department of Mathematical Sciences, Seoul National University}

\begin{abstract}
A harmonic cycle $\lambda$, also called a discrete harmonic form, is a solution of the Laplace's equation with the combinatorial Laplace operator obtained from the boundary operators of a chain complex. By the combinatorial Hodge theory, harmonic spaces are isomorphic to the homology groups with real coefficients. 
In particular, if a cell complex has a one dimensional reduced homology, it has a unique harmonic cycle up to scalar, which we call the \emph{standard harmonic cycle}.
In this paper, we will present a formula for the standard harmonic cycle $\lambda$ of a cell complex based on a high-dimensional generalization of cycletrees. 
Moreover, by using duality, we will define the standard harmonic cocycle $\lambda^*$, and show intriguing combinatorial properties of $\lambda$ and $\lambda^*$ in relation to (dual) spanning trees, (dual) cycletrees, winding numbers $w(\cdot)$ and cutting numbers $c(\cdot)$ in high dimensions.
\end{abstract}

\begin{keyword}
Harmonic cycle \sep Winding number \sep Cutting number \sep Combinatorial Hodge theory \sep Tree number \sep Cycletree


\MSC[2010] 05C30 \sep 05C50 \sep 52C99 \sep 55M25 \sep 65F40

\end{keyword}

\end{frontmatter}




\section{Introduction} \label{section_introduction}

A harmonic cycle is an element of the kernel of the combinatorial Laplacians associated with a finite chain complex. The purpose of this paper is to prove combinatorial formulas for harmonic cycles of cell complexes. In particular, this paper includes generalizations of the harmonic cycles for graphs \cite{KW}. Moreover, our formulas are constructed based on explicit combinatorial structures that reflect the \emph{duality} of a harmonic cycle as a cycle and a cocycle. 

An important motivation for studying harmonic cycles is the combinatorial Hodge theory (refer to \cite{E} and \cite{D}), which states that the harmonic space is isomorphic to homology groups with rational coefficients.  A harmonic cycle is energy minimizing among its homologous cycles, and as we shall see, allows intriguing combinatorial interpretations. For a basic material about harmonic cycles, check \cite{LIM}. See \cite{NS} and \cite{Ca2} for previous studies concerning harmonic spaces as a projection of cycle spaces. Originally, Hodge theory \cite{Ho} was developed using DeRham cohomology on a geometric manifold. Note that this methodology can be used for hole analysis in a brain network \cite{Lee}.

There are two important aspects in our current work as generalizations of our previous results \cite{KW}. One is a high-dimensional generalization of cycletrees as a basis for a formula of harmonic cycles of a cell complexes. A cycletree in a graph is also called a unicycle spanning graph or a cycle-rooted spanning tree (\cite{Be, Ke, KW}).  A high-dimensional cycletree is obtained by adding a cell to a high-dimensional spanning tree. See \Cref{section_cycletree} for details and refer to \cite{Ka} and \cite{Ca2, DKM, Ko} for high-dimensional spanning trees.

The cycletrees also play an important role in defining high-dimensional rational \emph{winding numbers} for cycles. See \Cref{section_winding}. We will define the standard harmonic cycle and discuss relations between the winding number and the standard harmonic cycle in \Cref{section_harmonic_cycle}. For the background of the winding numbers in graphs whose vertices in the plane, refer to \cite{G}.



The other aspect is \emph{dualization} and \emph{complementation} of spanning trees and cycletrees to obtain further formulas for harmonic cycles. For a dual of spanning trees, refer to \cite{Ca2, DKM}. In \Cref{section_dual_spanning_tree} and \Cref{section_dual_cycletree}, we will derive explicit relations among the resulting combinatorial objects including dual spanning trees and dual cycletrees, and introduce the notion of cutting number of cocycles including bonds which are cuts of special type. 

In \Cref{section_harmonic_cocycle}, we will present the cocycle decomposition of a harmonic cycle.
By introducing the standard harmonic cocycle, we drive a formula between cutting numbers and harmonic cycles. Finally, we define the normalized harmonic cycle from the relation between the standard harmonic cycle and cocycle. We show that the norm of the normalized harmonic cycle is the product of the number of spanning trees and the number of dual spanning trees.

\section{Preliminaries}
\subsection{Review of finite chain complex and (co)homology}
In this paper, we will assume the basic knowledge of finite chain complexes and homology groups with coefficients in $\Z$ or $\Q$ as computational tools. Familiarity with cell complexes will be helpful. For details, one may refer to standard texts in algebraic topology such as \cite{Ha, Mu} for details.

For $d>0$, let $X=X_0 \coprod \cdots \coprod X_d$ where $X_i$ is a nonempty finite set for each $i\in [0,d]$. We will refer to $X$ as a \emph{complex} of dimension $d$. (For example, $X$ may be a cell complex of dimension $d$ with $(X)_i$ the set of $i$-dimensional cells in X.) The $i$-th chain group of $X$ is the free abelian group $C_i=C_i(X)\cong\Z^{\rvert X_i \lvert}$ generated by $X_i$. The $i$-th boundary map $\partial_i=\partial_{i} (X):C_i\rightarrow C_{i-1}$ is an integer matrix whose rows and columns are indexed by $X_{i-1}$ and $X_i$, respectively, satisfying $\partial_{i-1}\partial_{i}=0$ for all $i$. We define $\partial_{i}=0$ for $i\notin [0,d]$. See below for $\partial_{0}$. The $i$-th coboundary map $\partial^t_{i}:C_{i-1}\rightarrow C_i$ is the transpose of $\partial_i$. We will often regard $X$ as the generators of the chain complex $\{C_i,\partial_i \}_{i\in [0,d]}$.

The elements of $Z_i=\ker\partial_i$ and $Z^i=\ker\partial^t_{i+1}$ are called $i$-cycles and $i$-cocycles, respectively. And the elements of $B_i=\im\partial_{i+1}$ and $B^i=\im\partial^t_{i-1}$ are called $i$-boundaries and $i$-coboundaries, respectively. The $i$-th homology and cohomology groups of $X$ are defined by $H_i(X)=Z_i / B_i$ and $H^i(X)=Z^i / B^i$. The $i$-th chain group and (co)homology group of $X$ with $\Q$-coefficients are denoted by $C_i(X;\Q)$ and $H_i(X;\Q)$ ($H^i(X;\Q)$.) As we shall see, we have  $H_i(X;\Q)\cong H^i(X;\Q)$ for all $i$ as $\Q$-vector spaces.

We have the augmented chain complex $\{C_i,\partial_i \}_{i\in [-1,d]}$ with $C_{-1}=\Z$ generated by the empty set and $\partial_{0}=\overrightarrow{1}:C_0\rightarrow C_{-1}$ defined by $\partial_{0}(x)=1$ for every $x\in X_{0}$. 
The homology of the augmented chain complex is the reduced homology $\WH_{i}(X)$. It is trivial to check that $H_0(X) = \WH_0(X)\oplus \Z$ and $\WH_i(X)=H_i(X)$ for $i\geq 1$. In this paper, we will usually work with reduced homology rather than homology.

\subsection{High dimensional spanning trees}
Let $X$ be a cell complex with the set of $i$-cells $(X)_i$ and the $i$-skeleton $X^i=(X)_{0}\cup\cdots \cup (X)_{i}$. 
Let $\mathcal{S}_i$ be a collection of subcomplexes of $X$ defined by $\mathcal{S}_i = \{Y\mid X^{i-1}\subseteq Y \subseteq X^{i}\}$. Then for any $Y\in \mathcal{S}_i$, we may write $Y=X^{i-1}\cup I(Y)$ where $I(Y)$ is a subset of  $(X)_i$ determined by $Y$.

Suppose the rank of  $\WH_{i-1}(X)$ is zero, i.e., $X$ is \emph{connected in dimension $i$}. From the definition \cite{DKM, Ka}, an $i$-dimensional spanning tree (or, $i$-tree, for short) $T$ is an element of $\mathcal{S}_i$ such that the columns of $\partial_i$ indexed by $I(T)$ form a $\mathbb{Q}$-basis of the column space $\im\partial_{i}$. Equivalently, an $i$-tree $T$ is an element of $\mathcal{S}_i$ such that $\WH_i(T)=0$ and $\WH_{i-1}(T)$ is finite. Let $\mathcal{T}_i=\mathcal{T}_{i}(X)$ denote the set of all $i$-dimensional spanning trees in $X$.

Define the weight of an $i$-tree $T$ to be $\wt(T)\equiv \lvert\WH_{i-1}(T)\rvert$. The  $i$-dimensional spanning tree number (or, $i$-tree number, for short) $k_i(X)$ is defined by 
$$k_i(X)=\sum_{T\in \mathcal{T}_{i}}\wt(T)^{2}$$
where the sum is over all $i$-trees $T\in \mathcal{T}_{i}$. Note that for an $i$-tree $T$, it is easy to check that $\lvert I(T)\rvert=\rk Z_{i-1}$. If a subcomplex $T\in \mathcal{S}_i$ satisfies $\lvert I(T)\rvert= \rk Z_{i-1}$, but $\lvert\WH_{i-1}(T)\rvert$ is infinite, then we will call $T$ a spanning tree with zero weight and define $\wt(T)=0$. 


The condition that $\WH_{i-1}(X)$ is finite is a generalization of connectedness in high dimensions. In other words, to define a high dimensional spanning forest $T$, the conditions that $\WH_{i-1}(T)$ and $\WH_{i-1}(X)$ are finite are not necessary, and the weight for $T$ is defined as $\lvert\Torsion(\WH_{i-1}(T))\rvert$. 

In this paper, we represent the $i$-tree number $k_i(X)$ as a determinant. To that end, note that the boundary group $B_{i-1}(X)=\im \partial_i$ is a subgroup of $Z_{i-1}(X)=\ker \partial_{i-1}$, both of which are free, being subgroups of the free abelian group  $C_{i-1}(X)$. Therefore, we can represent the columns of $\partial_i$ with respect to a basis $Z$ of $Z_{i-1}(X)$, which we will denote it by $[\partial_i]_Z$. Then, the weight of an $i$-tree $T$ is $\wt(T)= \lvert\det([\partial_i|_{I(T)}]_Z)\rvert$ where $\partial_i|_{I(T)}$ is the submatrix of $\partial_{i}$ consisting of the columns indexed by $I(T)$, and $k_i(X)$ is the sum of the squares of the $r\!\times\! r$ minors of $[\partial_i]_Z$ where $r=\rk Z_{i-1}(X)$. If there is no such minor, we will define $k_i(X)=0$. In other words, by using the Cauchy-Binet formula, we get the following proposition.

\begin{prop}\label{treenumber}
$$k_i(X)=\det([\partial_i]_Z[\partial_i]_Z^t).$$
\end{prop}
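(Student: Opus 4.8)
The plan is to read the statement as a direct application of the Cauchy--Binet formula, exactly as the preceding discussion anticipates. Write $M := [\partial_i]_Z$, the integer matrix whose rows are indexed by a basis $Z$ of $Z_{i-1}(X)$ and whose columns are indexed by $X_i$; since $Z_{i-1}(X)$ is free of rank $r=\rk Z_{i-1}(X)$ and each column of $\partial_i$ lies in $B_{i-1}\subseteq Z_{i-1}$, this $M$ has $r$ rows and $|X_i|$ columns. The target is $\det(MM^t)$, where $MM^t$ is the $r\times r$ Gram-type matrix of $M$.

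First I would apply Cauchy--Binet to $\det(MM^t)$. Since $MM^t$ factors through the $r$-rowed matrix $M$ and its transpose, the formula gives
$$\det(MM^t) = \sum_{S} \det(M_S)\,\det\big((M^t)^S\big),$$
where $S$ ranges over all $r$-element subsets of the column index set $X_i$, $M_S$ is the square submatrix of $M$ on columns $S$, and $(M^t)^S$ is the submatrix of $M^t$ on rows $S$. Because the rows of $M^t$ are the columns of $M$, we have $(M^t)^S = (M_S)^t$, so the two minors in each summand are equal and each term is $(\det M_S)^2$; hence $\det(MM^t) = \sum_S (\det M_S)^2$. If $r > |X_i|$ there are no such subsets and both sides vanish, since $MM^t$ then has rank at most $|X_i| < r$, matching the convention $k_i(X)=0$.

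Next I would translate each summand into the language of $i$-trees. An $r$-element subset $S\subseteq X_i$ determines the subcomplex $Y = X^{i-1}\cup S \in \mathcal{S}_i$ with $I(Y)=S$, $|I(Y)| = r = \rk Z_{i-1}$, and $M_S = [\partial_i|_S]_Z$. By the $\mathbb{Q}$-basis characterization of $i$-trees recalled above, there are two cases: if the columns of $\partial_i$ indexed by $S$ form a $\mathbb{Q}$-basis of $\im\partial_i$, then $Y$ is an $i$-tree and $|\det M_S| = \wt(Y)$; otherwise those columns are $\mathbb{Q}$-linearly dependent, so $\det M_S = 0$, consistent with the zero-weight convention. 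Consequently the nonzero summands are indexed exactly by $\mathcal{T}_i$, and $\det(MM^t) = \sum_{S}(\det M_S)^2 = \sum_{T\in\mathcal{T}_i}\wt(T)^2 = k_i(X)$.

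The computation is essentially formal once the setup is in place; the only point requiring care is the identification $|\det M_S| = \wt(Y) = |\WH_{i-1}(Y)|$ for an actual tree, which expresses the torsion order of $\WH_{i-1}(Y)$ as the absolute value of a maximal minor of $[\partial_i]_Z$. Since this weight--minor relation is already asserted in the paragraph defining $k_i(X)$, the remaining work is purely bookkeeping: confirming that the Cauchy--Binet sum ranges over all $r$-subsets, that transposition makes each term a perfect square, and that exactly the non-tree (and zero-weight) subsets contribute zero. I anticipate no genuine obstacle beyond keeping the correspondence between column subsets and elements of $\mathcal{S}_i$ precise.
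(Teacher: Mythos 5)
Your proposal is correct and follows essentially the same route as the paper's proof: apply Cauchy--Binet to $\det([\partial_i]_Z[\partial_i]_Z^t)$ to obtain the sum of squared maximal minors $\det([\partial_i|_I]_Z)^2$, then identify the nonzero minors with tree weights $\wt(T)=\lvert\WH_{i-1}(T)\rvert$ and the vanishing minors with non-trees or zero-weight trees. Your write-up is somewhat more explicit about the transposition step and the degenerate case $r>\lvert X_i\rvert$, but the argument is the same.
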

\begin{proof}
By the Cauchy-Binet formula, the right-hand side of the equation is the sum of $\det([\partial_i|_I]_Z)^2$  where $I$ is an $r$-subset of the indexing set for the columns of $[\partial_i]_Z$. If $\det ([\partial_i|_I]_Z)=0$, there is no spanning tree corresponding to  $I$ (or, we have a spanning tree of zero weight.) Otherwise, there is a spanning tree $T$ with its weight $\wt(T)=\lvert \WH_{i-1}(T)\rvert=\lvert\det([\partial_i|_I]_Z)\rvert$ where $I=I(T)$.
\end{proof}

\subsection{Harmonic space and combinatorial Hodge theory} 
In this subsection, we define harmonic cycles, a main object of study of this paper, via combinatorial Laplacians, and recall combinatorial Hodge theory relating harmonic cycles and homology groups. For readers who search for a basic material introducing harmonic spaces, refer to \cite{LIM}. For an application of this theory to the ranking system of items, can be found in \cite{JX}.

Given a cell complex $X$, the $i$-th combinatorial Laplacian $\Delta_{i}:C_{i}(X;\mathbb{Q})\rightarrow C_{i}(X;\mathbb{Q})$ is given by
$$\Delta_{i}=\partial_{i}^t \partial_{i} + \partial_{i+1} \partial^t_{i+1}$$
for each dimension $i$. The $i$-th \emph{harmonic space} $\mathscr{H}_i(X)$ of $X$ is defined to be 
$$\mathscr{H}_i(X)=\ker \Delta_{i}$$
and its elements are called $i$-\emph{harmonic cycles}.

Regard $C_{i}(X;\mathbb{Q})$ as a 
$\mathbb{Q}$-vector space endowed with a standard inner product $\circ$ so that the set $(X)_i$ of its generators forms an orthonormal basis. From the orthogonal decomposition (refer to \cite{Fr})
\begin{equation}\label{decompostion}
C_{i}(X;\mathbb{Q})=\mathscr{H}_{i}(X)\oplus\im\partial_{i}^{t}\oplus\im\partial_{i+1}
\end{equation}
one can deduce 
\begin{equation}\label{harmonics}
\mathscr{H}_i(X)=\ker\partial_{i} \cap \ker\partial^t_{i+1}
\end{equation}
via $(\im M^{t})^{\perp}=\ker M$ for a matrix $M$.  Hence, we have an important fact that \emph{a harmonic cycle is both a cycle and a cocycle}.

From the above orthogonal decomposition for $C_{i}(X;\mathbb{Q})$, we also see that $\ker\partial_{i}=\mathscr{H}_{i}(X)\oplus\im\partial_{i+1}$. Hence we have the following main statement of the combinatorial Hodge theory:  for each dimension $i$
\begin{equation}\label{hodge}
\mathscr{H}_i(X)\cong \WH_{i}(X;\mathbb{Q})
\end{equation}
as $\mathbb{Q}$-vector spaces. In particular $\rk\mathscr{H}_{i}(X)=\rk \WH_{i}(X;\mathbb{Q})$ for all $i$. (One can show that this isomorphism maps a harmonic cycle $h$ to its homology class $\bar{h}$.) 

The following \emph{energy minimizing property} of a harmonic cycle is a consequence of \Cref{hodge}: For $h\in \mathscr{H}_{i}(X)$ and $x\in \bar{h}$, 
\begin{equation}\label{energyminimizing}
h\circ h\leq x\circ x.
\end{equation} 
Indeed, this inequality follows easily from the facts $x=h+\partial_{i+1}y$ for some $y\in C_{i+1}(X;\mathbb{Q})$ and $h\circ (\partial_{i+1}y)=(\partial_{i+1}^{t}h)\circ y=0$ because $h\in\ker\partial^t_{i+1}$.

\section{Cycletree and its minimal cycle}\label{section_cycletree}
\subsection{Cycletree}
A \emph{cycletree} in a graph is a connected spanning subgraph such that the number of edges equals the number of vertices. Equivalently, a cycletree is the union of a spanning tree and an edge not in the spanning tree. Thus, a cycletree contains a unique cycle. In the literature, a cycletree is also called a unicycle graph, or a cycle rooted spanning tree. This object may be called a co-tree \cite{Ca}, or a relative acyclic complex to a given cycle. In our previous work \cite{KW}, we defined cycletrees in dimension 1. Refer to \cite{KW} for examples.

In the case of the complete graphs, the number of cycletrees is known. Refer to A057500 in On-Line Encyclopedia of Integer Sequences (OEIS, \cite{Sl}). An \emph{edge-rooted} cycletree is a cycletree together with an edge on its unique cycle. 

\begin{thm}
The number of cycletrees in a complete graph $K_n$ is $$\binom{n-1}{2}\, e^n\, \Gamma(n-2,n)$$
where $\Gamma$ is the incomplete gamma function, i.e., $\Gamma(s,x)=\int^\infty_x t^{s-1}e^{-t} \, \mathrm{d}t$.
\end{thm}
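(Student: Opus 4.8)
The plan is to enumerate the cycletrees of $K_n$ directly, stratifying by the length of the unique cycle, and then to match the resulting finite sum against the closed form of the incomplete gamma function at an integer argument. First I would fix the length $k$ of the unique cycle, which ranges over $3\le k\le n$. The number of ways to choose $k$ of the $n$ vertices and arrange them into an (undirected) cycle is $\binom{n}{k}\tfrac{(k-1)!}{2}$, the factor $\tfrac{1}{2}$ accounting for the two orientations. Having fixed this cycle, a cycletree is completed by adjoining the remaining $n-k$ vertices so that the whole subgraph is connected and no second cycle is created. The key observation is that deleting the $k$ cycle edges from a cycletree leaves a spanning forest with $n-k$ edges, hence $k$ components, and that each component must contain exactly one cycle vertex: two cycle vertices joined by a noncycle path would, together with an arc of the cycle, produce a second cycle, contradicting unicyclicity. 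Conversely any cycle on a chosen $k$-set together with such a forest reassembles to a cycletree, so this is a genuine bijection.

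The essential enumerative input is the generalized Cayley formula: the number of spanning forests of $K_n$ consisting of $k$ trees, with a prescribed set of $k$ vertices lying in distinct trees, equals $k\,n^{n-1-k}$. (The extremes $k=1$ and $k=n$ recover the $n^{n-2}$ spanning trees and the unique edgeless forest.) Multiplying the cycle count by the forest count and summing gives
\begin{equation*}
\#\{\text{cycletrees of }K_n\}=\sum_{k=3}^{n}\binom{n}{k}\frac{(k-1)!}{2}\,k\,n^{n-1-k}.
\end{equation*}
Since $\binom{n}{k}\tfrac{(k-1)!}{2}\,k=\tfrac{n!}{2(n-k)!}$, the substitution $j=n-k$ collapses the sum to
\begin{equation*}
\frac{(n-1)!}{2}\sum_{j=0}^{n-3}\frac{n^{j}}{j!}.
\end{equation*}

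It then remains to reconcile this with the stated expression. For a positive integer $m$ the incomplete gamma function satisfies $\Gamma(m,x)=(m-1)!\,e^{-x}\sum_{j=0}^{m-1}\frac{x^{j}}{j!}$, proved by repeated integration by parts; applying it with $m=n-2$ and $x=n$ gives $e^{n}\Gamma(n-2,n)=(n-3)!\sum_{j=0}^{n-3}\frac{n^{j}}{j!}$, and multiplying by $\binom{n-1}{2}=\frac{(n-1)(n-2)}{2}$ reproduces exactly $\frac{(n-1)!}{2}\sum_{j=0}^{n-3}\frac{n^{j}}{j!}$ (with both sides vanishing for $n<3$). The main obstacle I anticipate is the forest-counting step together with the verification that the deletion-of-cycle-edges map is a bijection onto connected unicyclic spanning subgraphs without over- or undercounting; a clean route is an exponential-generating-function argument for rooted forests, or an appeal to the matrix–tree theorem in its forest form. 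The remaining steps are routine factorial algebra and the standard integer-argument identity for $\Gamma(m,x)$.
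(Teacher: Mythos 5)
Your proof is correct, but it takes a genuinely different route from the paper's. The paper's proof is bijective and leans on an external result: it cites Bergeron et al.\ for the count $e^n\,\Gamma(n-2,n)$ of spanning trees of $K_n$ rooted at vertex $1$ in which $2$ is a descendant of $3$, then transfers this to edge-rooted cycletrees via the add/delete operation on the edge joining $2$ to the ancestor of $3$, with the factor $\binom{n-1}{2}$ arising as the choice of the labels playing the roles of $2$ and $3$ (essentially the OEIS A057500 argument). You instead compute the count from scratch: stratify by the cycle length $k$, count cycles as $\binom{n}{k}\frac{(k-1)!}{2}$, attach forests via the generalized Cayley formula $k\,n^{n-1-k}$ for spanning forests with $k$ prescribed vertices in distinct trees, and collapse the sum to $\frac{(n-1)!}{2}\sum_{j=0}^{n-3}\frac{n^j}{j!}$, which matches $\binom{n-1}{2}e^n\Gamma(n-2,n)$ by the integer-argument identity $\Gamma(m,x)=(m-1)!\,e^{-x}\sum_{j=0}^{m-1}\frac{x^j}{j!}$. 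Your bijection between cycletrees and pairs (cycle, rooted forest) is sound --- the counting argument that each forest component contains exactly one cycle vertex is exactly right, and the forest formula is classical (provable by the matrix--tree theorem or Pr\"ufer-type codes), so no real gap remains. Each approach buys something: the paper's explains the factor $\binom{n-1}{2}$ combinatorially and reuses known machinery, while yours is self-contained, makes the appearance of the incomplete gamma function transparent rather than inherited, and as a bonus yields the refined counts $l_k=\binom{n}{k}\frac{(k-1)!}{2}\,k\,n^{n-1-k}$ of cycletrees by cycle length, which connect directly to the quantities $l_j$ the paper studies immediately after this theorem.
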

\begin{proof}
The main bottleneck of the proof is done by \cite{Be}. In that paper, there is an enumerative formula $e^n\, \Gamma(n-2,n)$ for the number of spanning trees in the complete graph with labelled vertices, 1, 2, $\cdots$, $n$, such that a spanning tree is vertex-rooted at 1, and 2 is a descendent of 3. There is a bijection between the spanning tree, and an edge-rooted cycletree whose rooted edge connects 2 and the ancestor of 3 with respect the vertex root 1 . To be specific, the bijection is an operation which adds or deletes an edge between 2 and the ancestor of 3. There are $\binom{n-1}{2}$ ways to choose vertices 2 and 3 in the complete graph. Refer A057500 in OEIS, \cite{Sl} for details.
\end{proof}

It is unknown whether there is a polynomial time algorithm to calculate the number of cycletrees, in general. However, some weighted algorithm can be devised. Let $l_j$ be the number of cycletrees each with a cycle of length $j$. 

\begin{prop}
$$(m-n+1) k_1(G)= \sum^n_{j=1}l_j j$$
where $m$ is the number of edges in $G$.
\end{prop}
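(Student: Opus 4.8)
The plan is to prove this identity by a double-counting argument, interpreting both sides as the number of \emph{edge-rooted cycletrees} of $G$. Throughout, $n$ denotes the number of vertices of the connected graph $G$. First I would observe that every spanning tree of $G$ has exactly $n-1$ edges, so the number of edges of $G$ lying outside a given spanning tree is $m-(n-1)=m-n+1$. Moreover, for a graph each spanning tree $T$ satisfies $\WH_0(T)=0$, hence $\wt(T)=\lvert\WH_0(T)\rvert=1$, so that $k_1(G)=\sum_{T}\wt(T)^2$ is precisely the number $\tau(G)$ of spanning trees. Consequently the left-hand side $(m-n+1)\,k_1(G)$ counts the pairs $(T,e)$ in which $T$ is a spanning tree and $e$ is an edge of $G$ not in $T$.

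Next I would reinterpret the right-hand side. A cycletree $C$ whose unique cycle has length $j$ has exactly $j$ edges on that cycle, so $\sum_{j=1}^{n} l_j\, j$ counts the pairs $(C,f)$ in which $C$ is a cycletree and $f$ is an edge on its unique cycle; these are exactly the edge-rooted cycletrees.

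The heart of the argument is the bijection
$$\Phi\colon (T,e)\longmapsto (T\cup\{e\},\,e),\qquad \Psi\colon (C,f)\longmapsto (C\setminus\{f\},\,f)$$
between these two sets of pairs. For $\Phi$, adding a non-tree edge $e$ to a spanning tree $T$ yields a connected spanning subgraph with $n$ edges, i.e.\ a cycletree, and the unique cycle it creates passes through $e$; thus $(T\cup\{e\},e)$ is an edge-rooted cycletree. For $\Psi$, deleting an edge $f$ on the unique cycle of a cycletree $C$ leaves a connected spanning subgraph with $n-1$ edges, i.e.\ a spanning tree, with $f$ now a non-tree edge. A direct check shows that $\Psi\circ\Phi$ and $\Phi\circ\Psi$ are the identity maps, so $\Phi$ is a bijection; equating the two cardinalities gives the claimed identity.

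The steps are elementary, so there is no serious obstacle; the only points requiring care are the two well-definedness verifications for $\Phi$ and $\Psi$, namely that adding a non-tree edge creates exactly one cycle and that it passes through the added edge, and that deleting a cycle edge of a cycletree restores a connected spanning tree without disconnecting the graph. Both follow from the standard characterization of a spanning tree as a maximal acyclic (equivalently, minimal connected) spanning subgraph, which I would invoke rather than reprove.
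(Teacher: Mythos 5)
Your proof is correct and follows essentially the same route as the paper, whose one-line proof is precisely the bijection between edge-rooted cycletrees and pairs (spanning tree, non-tree edge); you have simply made explicit the counting of each side (including that $\wt(T)=1$ for graphs, so $k_1(G)$ is the spanning-tree number) and the well-definedness checks for the add/delete maps. No gaps to report.
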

\begin{proof}
There is a bijection from the set of all edge-rooted  cycletrees to the set of all pairs each consisting of a spanning tree and an edge not in the tree.
\end{proof}

By THEOREM 3 from \cite{Ke}, we get the following theorem for a cycletree-number with the weight of a squared cycle-length.

\begin{thm}
$$\det \triangle^{down}_1(t) = \sum^n_{j=1}l_j j^2t^2 + O(t^4)$$
where $t \in \R^* = \R-\{0\}$ and $\triangle^{down}_1(t)$ equals $\partial_1^t \partial_1$ except that a nonzero upper diagonal component is $-e^{it}$ and a nonzero lower diagonal component is $-e^{-it}$.
Therefore, the limit $\lim\limits_{t \to 0}\dfrac{\det \triangle^{down}_1(t)}{t^2}$ equals $\sum\limits^n_{j=1}l_j j^2$.
\end{thm}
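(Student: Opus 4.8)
The plan is to analyze the determinant of the perturbed down-Laplacian $\triangle^{down}_1(t) = \partial_1^t \partial_1$ with the modified off-diagonal entries and extract the coefficient of $t^2$ in its Taylor expansion around $t=0$. The key observation is that $\triangle^{down}_1(0)$ is exactly $\partial_1^t \partial_1$, the ordinary down-Laplacian on edges, which is singular: its kernel is precisely the cocycle space, and more relevantly its rank is $n-1$ on a connected graph, so $\det \triangle^{down}_1(0) = 0$. This explains the absence of a constant term and why the leading behavior is $O(t^2)$. First I would invoke the matrix-tree-style interpretation from THEOREM 3 of \cite{Ke}, which expresses such a weighted determinant as a sum over spanning structures of the graph where each structure contributes a product of edge weights. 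The perturbation $-e^{\pm it}$ on a single pair of off-diagonal entries encodes a phase (a ``winding'' contribution) associated to a distinguished edge, so that cycles passing through that edge acquire factors of $e^{it}$ and $e^{-it}$ depending on orientation.

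The main computation is to expand the weighted all-minors determinant combinatorially. I would use the Cauchy--Binet expansion: $\det \triangle^{down}_1(t)$ is a sum over $(n-1)$-subsets of edges, and each nonzero term corresponds to a subgraph, with the phase factors $e^{\pm it}$ accumulating according to how the distinguished edge sits relative to a cycle in that subgraph. The crucial point is that an edge-rooted cycletree with a cycle of length $j$ contributes a phase of the form $e^{ijt}$ or $1-\cos(jt)$-type terms once the two orientations are summed, because traversing the cycle of length $j$ accumulates the phase $j$ times. After symmetrizing over the $\pm$ orientations, the contribution of each cycletree with cycle length $j$ produces a factor behaving like $(1-\cos(jt)) \sim \tfrac{1}{2} j^2 t^2$ to leading order, while spanning-tree-only terms (with no cycle through the marked edge) contribute $0$ because the phase cancels. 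Summing over all cycletrees grouped by cycle length $j$, and accounting for the multiplicity $l_j$, yields the coefficient $\sum_{j=1}^n l_j j^2$ of $t^2$, with all odd powers of $t$ vanishing by the $t \mapsto -t$ symmetry of $\triangle^{down}_1(t)$.

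The final step is routine: once the coefficient of $t^2$ is identified as $\sum_{j=1}^n l_j j^2$, the limit statement $\lim_{t\to 0} \det\triangle^{down}_1(t)/t^2 = \sum_{j=1}^n l_j j^2$ follows immediately from the Taylor expansion, since there is no constant or linear term and the $t^2$ coefficient is exactly this sum.

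The hard part will be making the combinatorial bookkeeping of the phase factors precise: I must verify that a cycletree whose unique cycle has length $j$ and passes through the distinguished edge contributes a net weight proportional to $(e^{ijt} + e^{-ijt} - 2)$, and that cycletrees whose cycle avoids the marked edge, together with all non-cycletree subgraphs in the Cauchy--Binet expansion, contribute no $t^2$ term. This requires carefully translating THEOREM 3 of \cite{Ke} into the present normalization and confirming that the orientation conventions for $-e^{it}$ versus $-e^{-it}$ on the upper/lower diagonal entries produce exactly the cosine cancellation that manufactures the $j^2$ weighting.
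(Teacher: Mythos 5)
Your overall route---quote Kenyon's Theorem~3 and Taylor-expand the resulting per-cycle monodromy factors---is exactly the paper's route; indeed the paper offers no argument beyond the citation, so in outline you and the paper agree. But your setup contains a genuine inconsistency that would derail the computation. You place the phases $-e^{\pm it}$ on \emph{a single pair} of off-diagonal entries, attached to a ``distinguished edge.'' Under that reading, a cycle through the marked edge acquires the phase exactly once, so its Kenyon factor is $2-e^{it}-e^{-it}=2(1-\cos t)\sim t^{2}$ \emph{independently of its length} $j$, and the coefficient of $t^{2}$ would count cycletrees whose cycle passes through that one fixed edge---not $\sum_{j} l_j j^{2}$. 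The $j^{2}$ weighting you assert later (``accumulates the phase $j$ times,'' net factor $e^{ijt}+e^{-ijt}-2$) requires the connection to carry the phase $e^{it}$ on \emph{every} edge, i.e., every nonzero above-diagonal entry of the Laplacian becomes $-e^{it}$ and every below-diagonal one $-e^{-it}$. That is the intended reading of the theorem's terse hypothesis, and it is the only reading under which the monodromy of a $j$-cycle is $w(\gamma)=e^{ijt}$ and the factor $2-w(\gamma)-w(\gamma)^{-1}=2(1-\cos jt)=j^{2}t^{2}+O(t^{4})$ appears. You cannot simultaneously hold the single-marked-edge model and the $j$-fold phase accumulation; as written, your ``hard part'' paragraph asks you to verify a claim that is false in your own setup.

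A second, related slip is the size of the Cauchy--Binet expansion. Summing over $(n-1)$-subsets of edges is the matrix-tree expansion of a \emph{reduced} Laplacian; the determinant in this theorem is that of the full $n\times n$ twisted (vertex) Laplacian, whose Cauchy--Binet expansion runs over $n$-edge subsets. Spanning trees, having $n-1$ edges, never appear in that expansion at all, so there is no ``phase cancellation'' needed to kill them: the contributing subsets are precisely the spanning subgraphs all of whose components are unicyclic (Kenyon's cycle-rooted spanning forests). A subset with an acyclic component contributes $0$; a connected unicyclic subgraph, i.e.\ a cycletree with cycle length $j$, contributes $2(1-\cos jt)$; and a CRSF with $k\geq 2$ cycles contributes $\prod_{\gamma}2(1-\cos j_{\gamma}t)=O(t^{4})$---this last class is exactly where the $O(t^{4})$ error term comes from, and your proposal never accounts for it. (Also a minor point: $\ker(\partial_1^{t}\partial_1)=\ker\partial_1$ is the \emph{cycle} space, not the cocycle space.) Once the phase is placed on all edges and the expansion is run over $n$-subsets, your remaining steps are sound: the determinant is real and even in $t$ by Hermiticity, so odd powers vanish, and the limit statement follows immediately from the identified $t^{2}$ coefficient.
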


Recall that a high dimensional spanning tree $T\in \mathcal{S}_i$ can be described as a collection of column indices of $\partial_i$ corresponding to a basis for the column space of $\partial_i$. Likewise, we can define a high dimensional cycletree $Y\in \mathcal{S}_i$ as a collection of column indices having exactly one more element than that of a spanning tree $T$. Let $\mathcal{U}_i=\mathcal{U}_i(X)$ denote the set of all $i$-dimensional cycletrees of $X$. 



Now, let us define a cycletree $U$ and its weight in detail, which will be similar to those of a spanning tree.

\begin{mydef}\label{ctweight}
Let $X$ be a cell complex with $\rk\WH_{i-1}(X)=0$ for $i\geq 0$. An $i$-dimensional cycletree ($i$-cycletree) $U$ is an element in $\mathcal{S}_i$ such that $\lvert I(U)\rvert =\rk Z_{i-1}(X)+1$. The weight of $U$ is defined to be
$$\wt(U)=\lvert\WH_{i-1}(U)\rvert$$
when $\lvert\WH_{i-1}(U)\rvert$ is finite. Otherwise, define $\wt(U) = 0$.

\end{mydef}

Suppose $U$ is an $i$-cycletree with nonzero weight. The corresponding columns in the matrix $\partial_i$ to a cycletree is linearly dependent, and there is a unique non-trivial cycle $C_U$ in $Z_{i}(X)$ up to scalar multiplication, supported by the cycletree $U$. In order to define $C_U$ uniquely, we will construct the cycle part of a cycletree $C_U$ systemically after \Cref{cyclepart}.


\subsection{Minimal cycle}
In this subsection, we will fix the cycle part $C_U$ of a cycletree $U$ and regard it as a vector
We will show that $C_U$ is indeed a minimal cycle, which will be made clear subsequently. To get the cycle part $C_U$, we need the following lemma. 

\begin{lem}[Orthogonal complement]\label{cyclepart}
For a matrix $A$ of size $n\!\times\!(n+1)$, let $[A]_i$ denote $A$ minus the $i$-th column. Define a (row) vector $v$ whose components are given by $v_i=(-1)^i\cdot \det([A]_i)$. If $A$ has rank $n$, then we have the full rank $(n+1)\!\times\!(n+1)$ matrix $M$ which consists of $A$ and $v$ as the last row. If the rank of $A$ is less than $n$, then $v$ is the zero vector. Moreover, $v^t\in \ker A$.
\end{lem}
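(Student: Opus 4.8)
The plan is to prove all three assertions by Laplace (cofactor) expansion along a single appended row, since the vector $v$ is nothing but the signed list of maximal $n\times n$ minors of $A$, i.e.\ the standard ``generalized cross product'' whose defining property is precisely that it spans $\ker A$. The three claims then become three readings of the same determinant identity, and the only thing requiring care is the sign bookkeeping coming from the indexing convention $v_i=(-1)^i\det([A]_i)$.

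First I would establish the kernel claim $v^t\in\ker A$, which in fact requires no hypothesis on the rank. For each row index $j\in[1,n]$, form the $(n+1)\!\times\!(n+1)$ matrix obtained by appending the $j$-th row of $A$ as a new last row. This matrix has a repeated row, so its determinant vanishes. Expanding that determinant along the appended last row, and noting that deleting the last row together with column $i$ recovers exactly $[A]_i$, yields $0=\sum_i (-1)^{(n+1)+i}a_{ji}\det([A]_i)=\pm\sum_i a_{ji}v_i$, where the global sign $(-1)^{n+1}$ factors out and is harmless. Hence the $j$-th entry of $Av^t$ is zero for every $j$, giving $Av^t=0$ as required.

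Next, the rank-deficient case is immediate: if $\rk A<n$, then every $n\times n$ submatrix of $A$ is singular, so $\det([A]_i)=0$ for all $i$ and thus $v=0$. For the full-rank case I would compute $\det(M)$ directly, where $M$ stacks $A$ on top of $v$. Expanding $\det(M)$ along its last row $v$ and substituting $v_i=(-1)^i\det([A]_i)$ gives $\det(M)=\sum_i (-1)^{(n+1)+i}v_i\det([A]_i)=(-1)^{n+1}\sum_i \det([A]_i)^2$. Since $\rk A=n$ forces at least one maximal minor to be nonzero, the sum $\sum_i\det([A]_i)^2$ is strictly positive, so $\det(M)\neq 0$ and $M$ has full rank $n+1$. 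The only genuine point of attention throughout is tracking the parity factors in each Laplace expansion and confirming that the overall sign $(-1)^{n+1}$ cancels uniformly in both the kernel computation and the evaluation of $\det(M)$; there is no substantive obstacle beyond this.
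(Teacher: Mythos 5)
Your proposal is correct and follows essentially the same route as the paper's proof: both evaluate $\det(M)=(-1)^{n+1}\sum_i\det([A]_i)^2$ by cofactor expansion along the appended row to settle the full-rank and rank-deficient cases, and both prove $v^t\in\ker A$ via a vanishing determinant with a repeated row (the paper writes it in transposed form as $v\circ A_j^t=(-1)^{n+1}\det(A^t\,|\,A_j^t)=0$, which is the same identity you expand). Your sign bookkeeping is accurate throughout, and your observation that the kernel claim needs no rank hypothesis is a small but genuine clarification over the paper's terser wording.
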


\begin{proof} 
Let $A$ be an $n\!\times\!(n+1)$ matrix. Then, $\det(M)=(-1)^{n+1} \sum_i \det([A]_i)^2$. Therefore, if $\rk A$ is $n$, then there is a nonzero summand $\det([A]_i)$. However, if $\rk A$ is less then $n-1$, all summands $\det([A]_i)$ are zero, and $v=0$. To show the last statement, let $A_j$ is the $j$-th row of $A$, and note that for all $j$, the inner product $v \circ A_j^t=(-1)^{(n+1)}\det (A^t| A_j^t) =0$.
\end{proof}

We will let the vector $v$ in \Cref{cyclepart} be denoted by $v_A$ when necessary. In general, let $A$ be an $n\!\times\!m$ matrix with its rank $n(< m)$. With a column indexing set $I$ representing a cycletree, $A|_{I}$ is an $n\!\times\! (n+1)$ submatrix. Then, by applying \Cref{cyclepart}, we have a vector $v_{A|_{I}}$ of length $n+1$. Finally, we will extend this vector $v_{A|_{I}}$ to have length $m$ by adding 0 for the components that are not indexed by $I$. Check that $A\circ v_{A|_{I}}=0$.

By letting $A=[\partial_i]_Z$ where $Z$ is a basis of $\ker\partial_{i-1}$, we see that $v_{A|_{I(U)}}$ is the desired cycle $C_U$ for each cycletree $U$. It is worth noting that the components of the cycle $C_U$ in a cycletree $U$ are the weights of spanning trees included in $U$. Note that if a cycletree $U$ has zero weight, then  $C_U=0$ by this construction.




For a cycletree $U$, define $\gcd(C_U)$ to be the gcd of all components of the integer vector $C_U$. Note that $\gcd(C_U)$ is invariant either as $C_U\in C_i(X)$ or $C_U\in Z_i(X)$. Denote $\widehat{C_U}$ to be the integer vector $\dfrac{1}{\gcd(C_U)}C_U$. In the following proposition, we will see that the definition of $\wt(U)$ makes sense.

\begin{prop}\label{prop_gcd_wt}
For any cycletree $U$, we have $$\gcd(C_U)=\lvert\WH_{i-1}(U)\rvert.$$
\end{prop}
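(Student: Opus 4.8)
The plan is to reduce everything to the single integer matrix $B := [\partial_i|_{I(U)}]_Z$, namely the $r\times(r+1)$ matrix obtained by expressing the columns of $\partial_i$ indexed by $I(U)$ in a basis $Z$ of $Z_{i-1}(X)$, where $r=\rk Z_{i-1}(X)$. By the construction preceding the statement, $C_U = v_{A|_{I(U)}}$ with $A=[\partial_i]_Z$, so \Cref{cyclepart} applied to $A|_{I(U)}=B$ says precisely that the entries of $C_U$ (on the coordinates indexed by $I(U)$) are the signed maximal minors $(-1)^j\det([B]_j)$, and all other entries are $0$. Hence $\gcd(C_U)$ equals the greatest common divisor of all $r\times r$ minors of $B$. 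This is the quantity I will compute by other means.

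Next I would identify $\WH_{i-1}(U)$ as a cokernel of $B$. Since $U=X^{i-1}\cup I(U)$ shares its $(i-1)$-skeleton with $X$, we have $C_{i-1}(U)=C_{i-1}(X)$ and $C_{i-2}(U)=C_{i-2}(X)$, so $Z_{i-1}(U)=\ker\partial_{i-1}=Z_{i-1}(X)$, while $B_{i-1}(U)=\im(\partial_i|_{I(U)})$. Writing both in the basis $Z$ gives the isomorphism
$$\WH_{i-1}(U)=Z_{i-1}(X)\big/\im(\partial_i|_{I(U)})\;\cong\;\Z^{r}\big/B\,\Z^{r+1}=\operatorname{coker}B.$$

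The key step is then a Smith normal form computation. Assume first $\wt(U)\neq 0$, so that $\WH_{i-1}(U)$ is finite, equivalently $\rk B=r$; let $d_1\mid d_2\mid\cdots\mid d_r$ be the invariant factors of $B$. On the one hand, $\lvert\operatorname{coker}B\rvert=d_1d_2\cdots d_r$. On the other hand, the determinantal-divisor theorem states that the gcd of all $r\times r$ minors of $B$ equals the product $d_1d_2\cdots d_r$ of the first (here, all) invariant factors. Combining this with the previous two observations yields $\gcd(C_U)=d_1\cdots d_r=\lvert\operatorname{coker}B\rvert=\lvert\WH_{i-1}(U)\rvert$, as claimed.

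Finally I would dispose of the degenerate case $\rk B<r$ (zero weight): here every $r\times r$ minor of $B$ vanishes, so $C_U=0$ and $\gcd(C_U)=0$, consistent with the convention $\wt(U)=0$ when $\lvert\WH_{i-1}(U)\rvert$ is infinite. The proof involves no hard estimate; the main point requiring care is to state the determinantal-divisor identity $\gcd\{r\times r\text{ minors of }B\}=d_1\cdots d_r$ correctly for a \emph{non-square} $r\times(r+1)$ matrix, and to confirm that the minors generated by \Cref{cyclepart} are exactly these $r\times r$ minors with the correct signs. Both facts follow from the Smith normal form, but the bookkeeping of signs and of the index set $I(U)$ should be made explicit.
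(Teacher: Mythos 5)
Your proof is correct and takes essentially the same route as the paper: both arguments identify the entries of $C_U$ with the signed maximal minors of $B=[\partial_i|_{I(U)}]_Z$ via \Cref{cyclepart}, identify $\WH_{i-1}(U)$ with the cokernel $\Z^r/B\,\Z^{r+1}$, and conclude through the Smith normal form --- the paper by carrying out the columnwise integer elimination by hand (noting that it preserves both $\gcd(v_A)$ and the cokernel), you by citing the determinantal-divisor identity $\gcd\{r\times r\text{ minors of }B\}=d_1\cdots d_r$, which packages the same computation as a standard theorem. Your version has the minor advantage of making the cokernel identification explicit and of handling the rank-deficient case cleanly, where the paper's phrasing (and its expression $\Z^n/\im A^t$, apparently a typo for $\Z^n/\im A$) is looser.
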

\begin{proof}
It is enough to show that $\gcd(v_A)=\lvert\Z^n/\im A^t\rvert$ in \Cref{cyclepart}. If $\rk A < n$, we have $\gcd(v_A)=\lvert\Z^n/\im A^t\rvert$, and they are infinite. Therefore, suppose $\rk A = n$. Note that $\gcd(v_A)$ and $\Z^n/\im A^t$ are invariant under a columnwise Gauss elimination over $\Z$ for $A$. Hence, it suffices to consider the case where $A$ is the Smith normal form without pivoting via the Gauss elimination. Then, the nonzero entries of $A$ are $d_1, \cdots, d_n$. We can check that $\gcd(v_A)=d_1\times\cdots\times d_n=\lvert\Z^n/\im A^t\rvert$.
\end{proof}
\begin{cor}
For any cycletree $U$, $$C_U = \wt(U) \widehat{C_U}.$$
\end{cor}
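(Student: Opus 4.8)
The plan is to read the identity off directly from \Cref{prop_gcd_wt} together with the definition of $\widehat{C_U}$, so that the argument reduces to a single substitution once the nonzero-weight case is isolated. First I would unwind the definition of the normalized cycle: by construction $\widehat{C_U}=\frac{1}{\gcd(C_U)}C_U$, which, whenever $\gcd(C_U)\neq 0$, rearranges to $C_U=\gcd(C_U)\,\widehat{C_U}$. The only remaining content is then to identify the scalar $\gcd(C_U)$ with $\wt(U)$, and this is precisely the chain of equalities $\gcd(C_U)=\lvert\WH_{i-1}(U)\rvert=\wt(U)$ supplied by \Cref{prop_gcd_wt} and the definition of the cycletree weight in \Cref{ctweight}. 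Substituting this scalar into the rearranged expression yields $C_U=\wt(U)\,\widehat{C_U}$, as desired.

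The second step is to dispose of the degenerate case $\wt(U)=0$ separately, since there the normalizing formula for $\widehat{C_U}$ is not directly meaningful. When $U$ has zero weight, the construction of the cycle part recorded in the remark following the extension of $v_{A|_{I}}$ gives $C_U=0$ outright, while $\wt(U)=0$ simultaneously forces the right-hand side $\wt(U)\,\widehat{C_U}$ to vanish. Hence the displayed identity holds trivially as $0=0$, covering the degenerate regime and letting the single equation stand uniformly across all cycletrees $U$.

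I do not anticipate a genuine obstacle here: the corollary is a bookkeeping consequence of the proposition just proved, and the proof is essentially immediate. The only point that warrants care is the coherence of the two edge conventions, namely that $C_U=0$ when the weight vanishes and that $\widehat{C_U}$ is well defined (and integral) when it does not, so that both cases are subsumed by the one formula without ambiguity.
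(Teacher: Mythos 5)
Your proof is correct and matches the paper's treatment: the corollary is left unproved there precisely because it follows immediately from \Cref{prop_gcd_wt} together with the definitions $\widehat{C_U}=\frac{1}{\gcd(C_U)}C_U$ and $\wt(U)=\lvert\WH_{i-1}(U)\rvert$. Your separate handling of the zero-weight case (where $C_U=0$ by construction, so both sides vanish) is exactly the right bookkeeping and is consistent with the paper's convention.
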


To define a minimal cycle, we introduce a \emph{minimally supported vector} (or, minimal vector) in a vector space. Assume $V$ is a subspace of a vector space $\Q^n$. For a vector $v\in V$, the \emph{support} of $v$ is $\supp(v)=\{i\mid v_i\neq 0 \}$. Hence the set of supports of vectors in $V$ can be regarded as a subposet of the power set $2^{[n]}$. Since the support of a vector is invariant up to nonzero scalar multiplication, we can introduce a poset structure on $V/\Q^*$ similarly, where $\Q^*=\Q-\{0\}$. A minimal vector of $V$ is a vector in an equivalence class which is a minimal element in $V/\Q^*$.
\begin{mydef}\label{minimal}
An $i$-minimal cycle is a minimal vector in the cycle space $Z_i$. Define an $i$-minimal cocycle and an $i$-minimal (co)boundary, similarly.
\end{mydef}
Since a minimal cycle is a minimal vector in a kernel, we get the following.
\begin{prop}\label{minimal2}
The support of an $i$-minimal cycle corresponds to a collection of columns of $\partial_i$ which forms a minimal linearly dependent set.
\end{prop}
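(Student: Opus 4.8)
The plan is to set up a direct dictionary between vectors in the kernel $Z_i=\ker\partial_i$ and linear dependences among the columns of $\partial_i$, and then translate support-minimality on the cycle side into minimality of the dependent set on the matroid side. The key observation is that a vector $v\in\Q^{\lvert X_i\rvert}$ lies in $Z_i$ precisely when $\sum_j v_j\,(\partial_i)_j=0$, where $(\partial_i)_j$ denotes the $j$-th column of $\partial_i$; equivalently, the columns indexed by $\supp(v)$ are linearly dependent, the entries $v_j$ for $j\in\supp(v)$ furnishing an explicit dependence relation. Conversely, any linear dependence among a set $S$ of columns is the restriction to $S$ of some nonzero $w\in Z_i$ with $\supp(w)\subseteq S$. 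This correspondence is what makes \Cref{minimal} and \Cref{minimal2} two descriptions of the same object.

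First I would prove the forward direction. Let $v$ be an $i$-minimal cycle, so $v\in Z_i$ is nonzero and its class is minimal in $Z_i/\Q^*$. By the dictionary above, the columns indexed by $\supp(v)$ form a linearly dependent set. Suppose this set were \emph{not} minimal; then some proper subset $S\subsetneq\supp(v)$ of columns would already be dependent, yielding a nonzero $w\in Z_i$ with $\supp(w)\subseteq S\subsetneq\supp(v)$. This contradicts the minimality of $v$ in $Z_i/\Q^*$. Hence $\supp(v)$ indexes a minimal linearly dependent set of columns of $\partial_i$.

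Next I would prove the reverse direction. Suppose $v\in Z_i$ is nonzero and the columns indexed by $\supp(v)$ form a minimal dependent set. If the class of $v$ were not minimal in $Z_i/\Q^*$, there would exist a nonzero $w\in Z_i$ with $\supp(w)\subsetneq\supp(v)$. Then the columns indexed by $\supp(w)$ would be a dependent set properly contained in $\supp(v)$, contradicting the assumed minimality of the latter. Therefore $v$ is an $i$-minimal cycle, and the two notions coincide.

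The entire content is the matroid-theoretic fact that the minimal-support (elementary) vectors of $\ker\partial_i$ are exactly the circuits of the matroid defined by the columns of $\partial_i$, so I do not anticipate a genuine obstacle. The only care required is in the support--dependence dictionary: one must check that ``some proper subset of $\supp(v)$ indexes a dependent set'' is precisely the negation of ``$\supp(v)$ indexes a minimal dependent set,'' and that passing from a dependence relation to a kernel vector (and back) preserves the relevant support containments. Since $\supp$ is invariant under nonzero scalar multiplication, it makes no difference whether one argues in $Z_i$ or in $Z_i/\Q^*$, which is what lets the minimality comparison go through cleanly.
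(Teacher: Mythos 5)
Your proof is correct, and it is precisely the elaboration of what the paper intends: the paper's own proof is the one-line ``Clear from the definitions,'' and your support--dependence dictionary with the two contradiction arguments is the standard way of making that clear. No genuine difference in approach, just the details written out.
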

\begin{proof}
Clear from the definitions.
\end{proof}

Now, we characterize the unique cycle in a cycletree as a minimal cycle.
\begin{prop}\label{prop_mincycle1}
The cycle part of an $i$-cycletree with nonzero weight is an $i$-minimal cycle, and vice versa.
\end{prop}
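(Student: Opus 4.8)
The plan is to prove both implications through the matroid-theoretic notion of a \emph{circuit}, i.e., a minimal linearly dependent set of columns of $\partial_i$. By \Cref{minimal2}, being an $i$-minimal cycle is equivalent to having support equal to such a circuit, so the task reduces to showing that $\supp(C_U)$ is a circuit when $\wt(U)\neq 0$, and that conversely every circuit arises in this way. Throughout I set $A=[\partial_i]_Z$ with $Z$ a basis of $\ker\partial_{i-1}$, so that $A$ is an $n\times m$ matrix of full row rank $n=\rk Z_{i-1}(X)$; here full row rank uses the standing hypothesis $\rk\WH_{i-1}(X)=0$, which forces $\rk\im\partial_i=\rk Z_{i-1}=n$.

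For the forward direction, fix an $i$-cycletree $U$ with $\wt(U)\neq 0$. Then $A|_{I(U)}$ is $n\times(n+1)$, and by \Cref{cyclepart} together with \Cref{prop_gcd_wt} the condition $\wt(U)\neq 0$ is equivalent to $\rk A|_{I(U)}=n$, which in turn forces $\ker A|_{I(U)}$ to be one-dimensional and spanned by $C_U$. I would then argue that $\supp(C_U)$ is a circuit: if some proper subset of $\supp(C_U)$ carried a nonzero dependency, extending it by zeros would produce a nonzero element of the one-dimensional space $\ker A|_{I(U)}$ whose support is strictly smaller than $\supp(C_U)$, contradicting that every such element is a scalar multiple of $C_U$. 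Hence no cycle has support strictly inside $\supp(C_U)$, so $C_U$ is an $i$-minimal cycle in the sense of \Cref{minimal}.

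For the converse, start with an $i$-minimal cycle $w$; by \Cref{minimal2} its support $S$ is a circuit, so the columns indexed by $S$ have rank $|S|-1$. Choosing any $x\in S$, the set $S\setminus\{x\}$ is independent, and I would extend it to a basis $B$ of $\im\partial_i$, which has exactly $n$ elements by the opening remark. Setting $I(U)=B\cup\{x\}$ gives $|I(U)|=n+1$ and $\rk A|_{I(U)}=n$, so $U$ is an $i$-cycletree with $\wt(U)\neq 0$. Because $S$ is a circuit, the unique dependency among the columns of $B\cup\{x\}$ expresses $x$ through $S\setminus\{x\}$ with all coefficients nonzero, whence $\supp(C_U)=S$; as the cycles supported on a circuit form a one-dimensional space, $C_U$ is a nonzero scalar multiple of $w$, which is the required identification up to the scalar ambiguity inherent in \Cref{minimal}.

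The main obstacle I anticipate is the bookkeeping around the one-dimensionality of $\ker A|_{I(U)}$ and its identification with the span of $C_U$: this is exactly what converts the determinantal construction of \Cref{cyclepart} into a support-minimality statement, and it is where the hypothesis $\wt(U)\neq 0$ (equivalently $\rk A|_{I(U)}=n$, equivalently $C_U\neq 0$) must be invoked precisely. A secondary point needing care is the use of $\rk\WH_{i-1}(X)=0$ to guarantee that a basis of $\im\partial_i$ has exactly $n$ elements, so that the extension $S\setminus\{x\}\subseteq B$ indeed lands in a cycletree of the correct cardinality $n+1$.
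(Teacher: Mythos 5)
Your proof is correct and takes essentially the same route as the paper's: the forward direction rests on the one-dimensionality of $\ker\bigl([\partial_i|_{I(U)}]_Z\bigr)$ (the paper's ``unique solution up to nonzero scalar multiplication''), and your converse---deleting a cell $x$ from the circuit $S$, extending $S\setminus\{x\}$ to a column basis, and adjoining $x$---is exactly the paper's construction of a spanning tree $T$ containing $C-\{e\}$ and the cycletree $U=T\coprod\{e\}$. You simply make explicit some details the paper leaves implicit, such as the identification of $C_U$ with the given minimal cycle up to scalar and the verification that $\wt(U)\neq 0$.
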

\begin{proof}
The cycle part $C_U$ of a cycletree $U$ is the unique solution of a linear system up to nonzero scalar multiplication. Thus, we cannot find the nonzero cycle in $C_U$ with a smaller support, which means that $C_U$ is a minimal cycle.

For the converse, suppose we have a minimal cycle $C$. Then, delete a cell $e$ in $C$. One can construct a spanning tree $T$ including $C-\{e\}$. By the definition of spanning tree, we have $e\notin T$, but $C-\{e\}\subseteq T$. Finally, we have a cycletree $U=T\coprod \{e\}$ with its cycle part $C$.
\end{proof}

\begin{prop}
The set of $i$-minimal cycles generates the cycle space $Z_i(X)$.
\end{prop}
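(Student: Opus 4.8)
The plan is to show that every element of the cycle space $Z_i(X)=\ker\partial_i$ is a $\mathbb{Q}$-linear combination of $i$-minimal cycles, by induction on the size of the support. This is the linear-algebraic incarnation of the fact that the circuits of a matroid span its null space: by \Cref{minimal2} the support of a minimal cycle is precisely a minimal linearly dependent set of columns of $\partial_i$, i.e.\ a circuit of the matroid represented by $\partial_i$, and the claim is that the associated circuit vectors span all of $Z_i(X)$.

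First I would record the key structural observation: every nonzero cycle $v\in Z_i(X)$ contains a minimal cycle within its support, in the sense that there exists an $i$-minimal cycle $w$ with $\supp(w)\subseteq\supp(v)$. This holds because the collection $\{\,\supp(u)\mid u\in Z_i(X),\ u\neq 0,\ \supp(u)\subseteq\supp(v)\,\}$ is a nonempty finite subposet of $2^{[n]}$, hence has a minimal element. Any cycle realizing that minimal support is in fact a minimal vector of the \emph{full} space $Z_i(X)$: a strictly smaller nonzero cycle would also be supported inside $\supp(v)$, contradicting minimality in the subposet. Thus such a $w$ is an $i$-minimal cycle.

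The induction then proceeds as follows. If $v$ is itself a minimal cycle there is nothing to prove. Otherwise choose a minimal cycle $w$ with $\supp(w)\subsetneq\supp(v)$ as above, pick an index $j\in\supp(w)$, and set
$$v' = v - \frac{v_j}{w_j}\,w .$$
Since $Z_i(X)$ is a $\mathbb{Q}$-subspace, $v'\in Z_i(X)$; moreover $\supp(v')\subseteq\supp(v)$ while $v'_j=0$, so $\supp(v')\subsetneq\supp(v)$. By the inductive hypothesis $v'$ is a combination of minimal cycles, and therefore so is $v=v'+\frac{v_j}{w_j}\,w$. This shows the $i$-minimal cycles generate $Z_i(X)$.

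The argument is routine once the right quantity to induct on is identified, so I do not expect a genuine obstacle. The only points requiring care are the finiteness argument guaranteeing a minimal cycle inside a given support (which relies on supports being finite subsets of $[n]$, so that the support poset has minimal elements) and the verification that the support strictly decreases at each step. One could alternatively phrase the whole argument through cycletrees using \Cref{prop_mincycle1}, replacing ``minimal cycle'' by ``cycle part $C_U$ of a cycletree of nonzero weight,'' but the support-reduction argument above is self-contained and avoids selecting a spanning tree at each stage.
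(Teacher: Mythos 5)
Your proof is correct, but it follows a genuinely different route from the paper's. The paper's entire argument is the one-line observation that the minimal cycles include the fundamental cycles of a spanning tree: fixing an $i$-tree $T$, each cell $e\notin T$ gives a cycletree $U=T\coprod\{e\}$ whose cycle part $C_U$ is an $i$-minimal cycle by \Cref{prop_mincycle1}, and these fundamental cycles form a basis of $Z_i(X;\Q)$. So the paper actually exhibits a \emph{basis} consisting of minimal cycles, at the cost of invoking \Cref{prop_mincycle1} and the existence of an $i$-tree, hence the standing hypothesis $\rk\WH_{i-1}(X)=0$. Your support-reduction induction is the matroid-circuit decomposition argument: it is self-contained, uses no spanning tree and no connectivity hypothesis (it works for the kernel of any matrix over $\Q$), and yields a complementary strengthening, namely that every cycle $v$ is a $\Q$-combination of minimal cycles each supported inside $\supp(v)$. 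The two delicate points are handled correctly: a cycle realizing a minimal support in the subposet $\{\supp(u)\mid 0\neq u\in Z_i(X),\ \supp(u)\subseteq\supp(v)\}$ is minimal in all of $Z_i(X)$ (any strictly smaller witness would lie in the same subposet), and the strict inclusion $\supp(w)\subsetneq\supp(v)$ in the induction step is available precisely because the case $v$ minimal is split off first, while $j\in\supp(w)\subsetneq\supp(v)$ guarantees $v_j\neq 0$ so the support genuinely shrinks. In short: the paper's proof buys an explicit basis (and ties the statement to the cycletree machinery it develops), while yours buys generality and a support-respecting decomposition; your closing remark that the cycletree route recovers the paper's argument is exactly right.
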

\begin{proof}
The set of minimal cycles includes the fundamental cycles of a spanning tree.
\end{proof}

\begin{prop}\label{prop_mincycle2}
A minimal boundary in $B_i(X)$ is either a minimal cycle in $Z_i(X)$ or the sum of two minimal cycles if $\rk \WH_i(X)=1$.
\end{prop}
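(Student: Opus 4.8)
The plan is to localize the whole problem to the support $S=\supp(b)$ of the given minimal boundary $b$ and to study the small space of cycles that live on $S$. Concretely I would set
$V_S=\{z\in Z_i(X):\supp(z)\subseteq S\}$ and $W_S=V_S\cap B_i(X)$. By \Cref{minimal2}, minimal cycles whose support is contained in $S$ are precisely the minimally supported nonzero vectors of $V_S$, so everything reduces to understanding $V_S$. The heart of the argument is the bound $\dim V_S\le 2$, after which the two alternatives in the statement are read off directly.

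First I would pin down $\dim W_S$. Since $b$ is minimal in $B_i(X)$, every nonzero element of $W_S$ must have support exactly $S$, for a boundary with support strictly inside $S$ would contradict the minimality of $b$. If $\dim W_S\ge 2$, then choosing independent $w_1,w_2\in W_S$ (each of full support $S$, so $(w_1)_j,(w_2)_j\neq 0$ for every $j\in S$) and eliminating one coordinate $j\in S$ yields the nonzero boundary $(w_2)_j\,w_1-(w_1)_j\,w_2\in W_S$ whose support omits $j$, again a contradiction. Hence $\dim W_S=1$ and $W_S=\langle b\rangle$.

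Next I would bound $\dim V_S$ using the hypothesis $\rk\WH_i(X)=1$. The inclusion $V_S\hookrightarrow Z_i(X)$ descends to an injection $V_S/W_S=V_S/(V_S\cap B_i(X))\hookrightarrow Z_i(X)/B_i(X)$, and by combinatorial Hodge theory (\Cref{hodge}) the quotient $Z_i(X)/B_i(X)$ is isomorphic to $\WH_i(X;\Q)$, which has rank $1$. Therefore $\dim(V_S/W_S)\le 1$, giving $\dim V_S\le 2$. If $\dim V_S=1$, then $V_S=\langle b\rangle$, so no cycle has support strictly inside $S$ and $b$ is itself a minimal cycle, which is the first alternative.

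The remaining case $\dim V_S=2$ is where the real work lies, and I expect it to be the main obstacle. Here I would exploit the rank-$2$ structure of $V_S\subseteq\Q^{S}$: for each $j\in S$ the coordinate functional $v\mapsto v_j$ is nonzero (it is nonzero on $b$), so its kernel is a line, a point of the projective line $\mathbb{P}(V_S)$. These lines cannot all coincide, since a common spanning vector would then vanish on all of $S$ and be $0$; hence there are at least two distinct such ``special'' points. Grouping the coordinates of $S$ by which special point annihilates them partitions $S$ into disjoint nonempty blocks, and the spanning vector at each special point has support equal to the complement of its block. As these complements are pairwise incomparable, each such spanning vector is a minimally supported vector of $V_S$, hence a minimal cycle (minimality in $V_S$ forces minimality in $Z_i(X)$, because any strictly smaller-support cycle again lies in $V_S$). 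Picking two distinct special points gives minimal cycles $c_1,c_2$ that are linearly independent and therefore a basis of the $2$-dimensional $V_S$, so $b=\alpha c_1+\beta c_2$; moreover $\alpha,\beta\neq 0$ because $b$ has full support $S$ while $c_1,c_2$ do not. Thus $b=(\alpha c_1)+(\beta c_2)$ exhibits $b$ as a sum of two minimal cycles, which is the second alternative. The delicate points to verify carefully are the incomparability argument showing the special-point vectors are genuinely minimal, and the claim that the full support of $b$ forces both coefficients to be nonzero.
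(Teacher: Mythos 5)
Your proof is correct, but it takes a genuinely different route from the paper's. The paper's proof uses the Hodge decomposition $Z_i(X;\Q)=B_i(X;\Q)\oplus\mathscr{H}_{i}(X)$, chooses an integral harmonic cycle $\lambda$ spanning $\mathscr{H}_{i}(X)$ (via \Cref{hc}), and realizes $B_i$ as the kernel of the augmented matrix $\overline{\partial}_i$ obtained by appending $\lambda$ to $\partial_i$ as an extra row; a minimal boundary is then literally a minimal cycle of the augmented system, and since deleting the row $\lambda$ enlarges the solution space by at most rank one, the proposition is asserted to follow. That last step --- why a corank-one enlargement forces a decomposition into at most two minimal cycles --- is precisely what the paper leaves implicit and what you carry out in full: the localization to $V_S$, the elimination argument giving $\dim W_S=1$, the injection $V_S/W_S\hookrightarrow Z_i(X;\Q)/B_i(X;\Q)$ giving $\dim V_S\le 2$ (this is where $\rk\WH_i(X)=1$ enters for you, exactly as the extra row $\lambda$ does for the paper), and the projective-line partition of $S$ producing the two minimal cycles. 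The delicate points you flagged do check out: any nonzero $z\in V_S$ vanishing at some $j\in S$ lies on the kernel line of the $j$-th coordinate functional and is therefore a multiple of a special vector, so every small-support vector is special; incomparability of the complements of the blocks then yields minimality of the special vectors, and full support of $b$ rules out $\alpha=0$ or $\beta=0$. In short, your approach buys a complete, self-contained argument that never chooses $\lambda$ or modifies the complex, while the paper's augmented-matrix device buys brevity and lets \Cref{minimal2} apply verbatim to minimal boundaries, at the cost of leaving the actual two-cycle decomposition to the reader.
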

\begin{proof}
First, we have the orthogonal decomposition $Z_i(X;\Q)=B_i(X;\Q)\oplus \mathscr{H}_{i}(X)$ from \Cref{decompostion}. We can choose a harmonic cycle $\lambda$ such that $\lambda$ is an integer vector and $\mathscr{H}_{i}(X)=\lambda\Q$ by \Cref{hc}. Therefore, the boundary group $B_i=B_i(X;\Z)$ can be represented by a kernel, i.e., we have $B_i=\ker\overline{\partial}_i$ where $\overline{\partial}_i=\begin{pmatrix} \partial_i \\ \lambda \end{pmatrix}$ and $\lambda$ is a row vector. Now, we can treat a minimal boundary as a minimal cycle. Let $v$ be a minimal boundary in $B_i$. Then, $v$ is a minimal cycle in $\ker\overline{\partial}_i$. We have one less condition in the linear system $\overline{\partial}_i x=0$ than those in the system $\partial_i x=0$. The solution space of $\overline{\partial}_i x=0$ might be larger by at most rank 1. Therefore, this proposition holds.
\end{proof}

\begin{ex}[Minimal boundary and minimal cycle]
Figure 1-A shows a cell complex $X$. In Figure 1-B, we have the red faces, whose boundary is a minimal cycle. In Figure 1-C, we have the red faces, whose boundary is the sum of minimal cycles, the outer cycle and the inner cycle.

\begin{figure}[h]
\centering
\includegraphics[angle=0, scale=0.4]{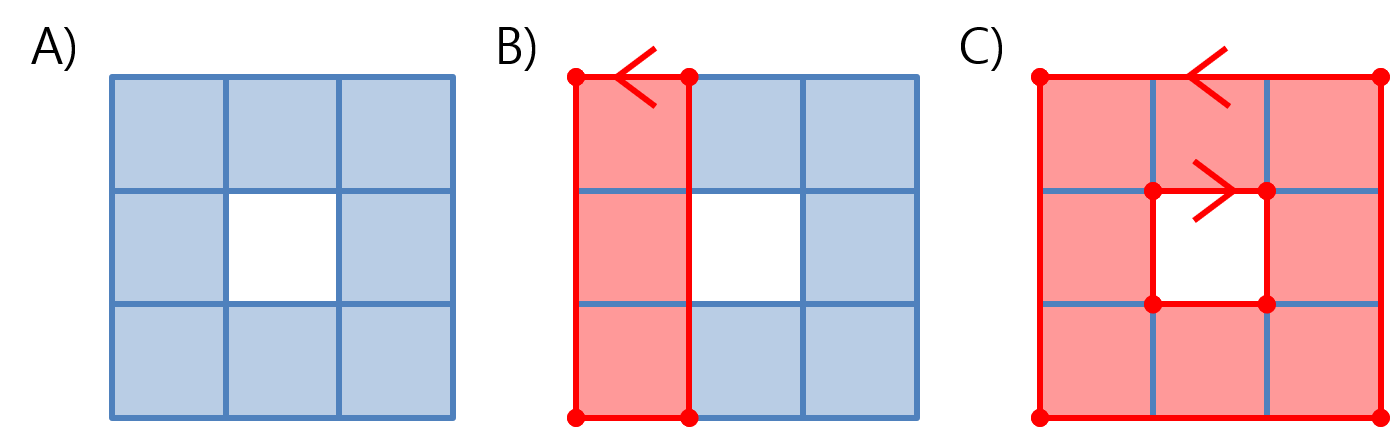}
\caption{A cell complex $X$ and minimal cycles with minimal boundaries} \label{Figure1}
\end{figure}
\end{ex}

In what follows, we will often need to \emph{annihilate} a cycle $z\in Z_{i-1}(X)$ to simplify homology. This task will be achieved purely algebraically by modifying the chain complex of $X$ without any reference to topology. Specifically, let $z$ be a cycle in $Z_{i-1}(X)$ that is not a boundary. Let $e\Z$ denote a rank 1 free abelian group generated by an element $e$, which we may call a \emph{virtual $i$-cell}. Let $X\oplus e$ denote the chain complex whose $i$-th chain group is $C_i(X)\oplus e\Z$ and $j$-th chain group is $C_j(X)$ if $j\neq i$. We define its $i$-th boundary operator to be $\partial_{X\oplus e,i}=(\partial_{X,i},\,z)$ obtained by adding $z$ to $\partial_{X,i}$ as the last column. Also, we define $\partial_{X\oplus e,j}=\partial_{X,j}$ for $j\not=i$.  Note that $\partial_{X\oplus e,i}(e)=z$, and, therefore, $z$ is annihilated in the homology of $X\oplus e$. We will refer to $e$ as \emph{a virtual $i$-cell whose boundary equals $z$}.



\begin{prop}\label{ctgcd}
Let $U$ be an $i$-cycletree with nonezero weight in a cell complex $X$ such that $\rk\WH_{i-1}(X)=0$, and $e$ be a virtual $(i+1)$-cell whose boundary is $m\widehat{C_U}$ in $C_i(X)$ where $m$ is a positive integer. Then, we have
$$\lvert\WH_i (U\oplus e)\rvert= m.$$
\end{prop}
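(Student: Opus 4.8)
The plan is to read off $\WH_i(U\oplus e)$ directly from its definition as $Z_i/B_i$, exploiting the fact that adjoining the virtual cell $e$ alters only the $(i+1)$-chains and the map $\partial_{i+1}$, leaving $\partial_i$ untouched.

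First I would record the two groups involved. Since $U=X^{i-1}\cup I(U)$ carries no cells above dimension $i$, the complex $U\oplus e$ has $C_{i+1}(U\oplus e)=e\Z$ of rank one with $\partial_{i+1}(e)=m\widehat{C_U}$, while $\partial_i$ is unchanged. Hence $Z_i(U\oplus e)=\ker\partial_i=Z_i(U)$ and $B_i(U\oplus e)=\im\partial_{i+1}=m\Z\,\widehat{C_U}$.

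The crux is to identify $Z_i(U)$ as a $\Z$-module. Because $U$ has nonzero weight, $\WH_{i-1}(U)=Z_{i-1}(X)/\im(\partial_i|_{I(U)})$ is finite, so the columns of $\partial_i$ indexed by $I(U)$ have full rank $\rk Z_{i-1}(X)=|I(U)|-1$; therefore $Z_i(U)=\ker(\partial_i|_{I(U)})$ is free of rank one. As the kernel of a homomorphism between free abelian groups it is a saturated subgroup, hence generated by a primitive integer vector. By \Cref{prop_gcd_wt} and its corollary, $\widehat{C_U}=\tfrac{1}{\gcd(C_U)}C_U$ is a primitive integer element of $Z_i(U)$, so it is exactly a generator: $Z_i(U)=\Z\,\widehat{C_U}$.

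Combining the last two paragraphs yields $\WH_i(U\oplus e)=\Z\widehat{C_U}/m\Z\widehat{C_U}\cong\Z/m\Z$, which has order $m$; the computation remains valid in reduced homology since $e$ sits in dimension $i+1\ge 1$, where the augmentation plays no role. The main obstacle is the middle step: one must verify not only that $Z_i(U)$ has rank one (from the cycletree full-rank condition) but that $\widehat{C_U}$ generates it over $\Z$ rather than merely over $\Q$. This integrality is precisely what \Cref{prop_gcd_wt} supplies, and once $\widehat{C_U}$ is known to be a $\Z$-generator the quotient is a routine cyclic-group computation.
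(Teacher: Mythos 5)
Your proof is correct and takes essentially the same route as the paper's: both reduce the claim to the computation $\WH_i(U\oplus e)=\ker\partial_{U,i}/\im\partial_{U\oplus e,i+1}$, identify $\ker\partial_{U,i}\cong\Z$ with generator $\widehat{C_U}$, and conclude the quotient is $\Z/m\Z$. The one difference is that you supply the justification (rank-one kernel, saturation of kernels of maps between free abelian groups, primitivity of $\widehat{C_U}$) for the step the paper asserts tersely by citing \Cref{cyclepart}, which by itself only gives $C_U\in\ker\partial_{U,i}$ — a worthwhile bit of added rigor rather than a different argument.
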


\begin{proof}
We have $\WH_i (U\oplus e)=\cfrac{\ker \partial_{U\oplus e,i}}{\im \partial_{U\oplus e,i+1}}=\cfrac{\ker \partial_{U,i}}{\partial_{i+1}(e)}=\cfrac{\Z}{m \Z}$. By \Cref{cyclepart}, we know that $C_U$ is a non-trivial element in $\ker\partial_{U,i}$ which is is isomorphic to $\Z$, and the integer vector $\widehat{C}_U$ is a generator of $\ker \partial_{U,i}$.
\end{proof}


\section{Winding number}\label{section_winding}


The $i$-winding number $w(\cdot)$ is defined as a function on the cycle space $Z_i(X)$ (or, $\WH_i(X)$ and $\mathscr{H}_i(X)$). It measures how many times a cycle winds around a generator of homology or harmonic space. For simplicity, we will focus on the case $\rk \WH_i(X)=1$. Let $r_i$ be $\rk Z_i(X)$. 


For any $n\!\times\! m$ matrix $M$, we can find an $n\!\times\!(\rk M)$ matrix $\overline{M}$ satisfying $\im M= \im\overline{M}$ as vector spaces, for example, by choosing the columns of $\overline{M}$ to be a basis for the column space of $M$. Therefore, from $\partial_{i+1}$, we can obtain $\overline{\partial}_{i+1}$ of size $\rk C_i\!\times\!\rk\partial_{i+1}$.  Note that we have $\rk\partial_{i+1}=r_i-1$ since we assume $\rk \WH_i(X)=1$. Now, fix a basis $Z$ of $Z_i(X)$. Then, we get the $r_i \!\times\! (r_i-1)$ matrix $[\overline{\partial}_{i+1}]_Z$ by writing the columns of $\overline{\partial}_{i+1}$ with respect to $Z$.

\begin{mydef}
An $i$-winding number $w:Z_i(X)\rightarrow \mathbb{Z}$ is given by $w(z)=\det([z]_Z,[\overline{\partial}_{i+1}]_Z)$ where $Z$ is a basis of $Z_i(X)$.
\end{mydef}

Note that this construction is independent of the choice of $Z$ up to sign. The winding number can be interpreted via homology by the following proposition. 

\begin{prop} \label{winding0}
Let $e$ be a virtual $(i+1)$-cell whose boundary is a given cycle $z\in\ker\partial_{i}$. If $w(z)$ is nonzero, 
$$\lvert w(z)\rvert= \lvert\WH_i(X\oplus e)\rvert.$$
Moreover, $w(z) = 0$ if and only if $\lvert \WH_i(X\oplus e)\rvert=\infty$
\end{prop}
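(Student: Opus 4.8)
The plan is to identify the reduced homology $\WH_i(X\oplus e)$ explicitly as a cokernel and then reduce the order of that cokernel to the single determinant defining $w(z)$. First I would unpack the annihilation construction: since $e$ is a virtual $(i+1)$-cell with $\partial_{X\oplus e,i+1}(e)=z$, adjoining it leaves the $i$-th boundary map unchanged, so $\ker\partial_{X\oplus e,i}=Z_i(X)$, while the $(i+1)$-th boundary map becomes $(\partial_{i+1}\mid z)$, whence $\im\partial_{X\oplus e,i+1}=B_i(X)+\Z z$. Because $\partial_i z=0$ (as $z\in\ker\partial_i$) this subgroup sits inside $Z_i(X)$, and therefore
$$\WH_i(X\oplus e)=\frac{Z_i(X)}{B_i(X)+\Z z}.$$

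Next I would pass to coordinates with respect to the fixed integral basis $Z$ of the free group $Z_i(X)$, which has rank $r_i$. The columns of $\overline{\partial}_{i+1}$ furnish an integral basis of $B_i(X)=\im\partial_{i+1}$, a subgroup of rank $r_i-1$ (using $\rk\WH_i(X)=1$). Consequently $B_i(X)+\Z z$ is exactly the integer column span of the square $r_i\!\times\! r_i$ matrix $N=([z]_Z,\,[\overline{\partial}_{i+1}]_Z)$, whose determinant is $w(z)$ by definition. I would then invoke the standard fact that for a square integer matrix $N$ one has $\lvert\Z^{r_i}/\im N\rvert=\lvert\det N\rvert$ whenever $\det N\neq 0$. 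Combining this with the displayed identification yields $\lvert w(z)\rvert=\lvert\WH_i(X\oplus e)\rvert$ in the nonzero case. The "moreover" clause follows from the same square matrix: $w(z)=\det N=0$ holds iff $N$ is singular, iff its column lattice has rank strictly below $r_i$, iff the quotient $Z_i(X)/(B_i(X)+\Z z)$ has positive rank and is thus infinite; this equivalence runs in both directions, giving the claimed biconditional.

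The step I expect to be the main obstacle is the \emph{integrality} at the heart of the second paragraph: the argument collapses to a single determinant only because the columns of $\overline{\partial}_{i+1}$ span $B_i(X)=\im\partial_{i+1}$ \emph{over $\Z$}, not merely over $\Q$. If $\overline{\partial}_{i+1}$ were only a rational basis of the column space, then $\lvert\WH_i(X\oplus e)\rvert$ would equal the gcd of the maximal minors of $[(\partial_{i+1}\mid z)]_Z$ rather than one such minor, and the identity would fail by a unimodular-defect factor. I would therefore take care to confirm that $\overline{\partial}_{i+1}$ is chosen so that its columns form an integral basis of $B_i(X)$; this is precisely the hypothesis that also makes $w$ well defined up to sign, since both a change of the basis $Z$ and a change of integral basis of $B_i(X)$ act by unimodular matrices and hence preserve $\lvert\det N\rvert$.
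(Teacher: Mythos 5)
Your proof is correct and follows essentially the same route as the paper's: identify $\WH_i(X\oplus e)$ with the lattice quotient $\ker\partial_i/(\im\partial_{i+1}+z\Z)$ and equate its order with $\lvert\det([z]_Z,[\overline{\partial}_{i+1}]_Z)\rvert$, the determinant of a square integer matrix computing the index of its column lattice, with the degenerate (rank-drop) case giving the biconditional in both arguments. Your closing caveat is well taken: the paper's definition of $\overline{\partial}_{i+1}$ only requires $\im\partial_{i+1}=\im\overline{\partial}_{i+1}$ \emph{as vector spaces}, so the step $\im\overline{\partial}_{i+1}+z\Z=\im\partial_{i+1}+z\Z$ in the paper's chain of equalities tacitly needs the columns of $\overline{\partial}_{i+1}$ to form a $\Z$-basis of $B_i(X)$, exactly the integrality point you make explicit.
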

\begin{proof}
\begin{eqnarray*}
\lvert w(z)\rvert
&=& \lvert\det([z]_Z,[\overline{\partial}_{X,i+1}]_Z)\rvert \\
&=& \lvert\ker\partial_{i}/(\im\overline{\partial}_{X,i+1}+z\Z)\rvert \\
&=& \lvert\ker\partial_{i}/(\im\partial_{X,i+1}+\partial_{i+1}(e)\Z)\rvert \\
&=& \lvert\WH_i(X\oplus e)\rvert
\end{eqnarray*}
Moreover, $w(z) = 0$ means $z\in \im \partial_{X,i+1}$. Equivalently, $\WH_i(X\oplus e)=\WH_i(X)$ has rank 1.
\end{proof}

Since we represent $\lvert w(C_U) \rvert$ in terms of homology, we can analyze it via a long exact homology sequence.

\begin{prop} \label{winding}
Let $U$ be an $i$-cycletree with nonezero weight in a cell complex $X$. If $w(C_U)\neq 0$, we have
$$\wt(U)\lvert w(\widehat{C_U})\rvert=\lvert H_{i}(X,U)\rvert\lvert\WH_{i-1}(X)\rvert.$$
Moreover, $w(C_U) = 0$ if and only if $\lvert H_{i}(X,U)\rvert$ is infinite.
\end{prop}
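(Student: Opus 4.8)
The plan is to translate the winding number into the order of a homology group via \Cref{winding0}, and then read off the desired identity from the reduced long exact sequence of a suitable pair.

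First, since $w(z)=\det([z]_Z,[\overline{\partial}_{i+1}]_Z)$ is linear in its first column $[z]_Z$, and since $C_U=\wt(U)\widehat{C_U}$ with $\wt(U)>0$, we have $w(C_U)=\wt(U)\,w(\widehat{C_U})$; in particular $w(C_U)=0$ if and only if $w(\widehat{C_U})=0$, so it suffices to work with $\widehat{C_U}$. Let $e$ be a virtual $(i+1)$-cell whose boundary is $\widehat{C_U}$; as $\widehat{C_U}$ is supported on $U$, we may adjoin $e$ to both $U$ and $X$. By \Cref{winding0}, when $w(\widehat{C_U})\neq 0$ we have $\lvert w(\widehat{C_U})\rvert=\lvert\WH_i(X\oplus e)\rvert$, and $w(\widehat{C_U})=0$ exactly when $\WH_i(X\oplus e)$ is infinite.

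The key observation is that $e$ cancels in the relative complex. Since $U\in\mathcal{S}_i$ has no $(i+1)$-cells, the virtual cell $e$ lies in both $C_{i+1}(X\oplus e)$ and $C_{i+1}(U\oplus e)$, so the relative chain complex of $(X\oplus e,\,U\oplus e)$ is identical to that of $(X,U)$; hence $H_i(X\oplus e,U\oplus e)\cong H_i(X,U)$. I then write the reduced long exact sequence of $(X\oplus e,\,U\oplus e)$ around dimension $i$ and simplify its terms: $\WH_i(U\oplus e)=0$ by \Cref{ctgcd} with $m=1$, while adjoining an $(i+1)$-cell leaves the $(i-1)$-st homology unchanged, so that $\WH_{i-1}(U\oplus e)=\WH_{i-1}(U)$ (finite of order $\wt(U)$) and $\WH_{i-1}(X\oplus e)=\WH_{i-1}(X)$ (finite, as $\rk\WH_{i-1}(X)=0$). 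This yields the exact sequence
\[
0\longrightarrow \WH_i(X\oplus e)\longrightarrow H_i(X,U)\xrightarrow{\;b\;}\WH_{i-1}(U)\xrightarrow{\;f\;}\WH_{i-1}(X).
\]

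Next I analyze the inclusion-induced map $f$. Because $U$ and $X$ share the full $(i-1)$-skeleton, $Z_{i-1}(U)=Z_{i-1}(X)$, whereas $B_{i-1}(U)=\im\partial_i|_{I(U)}\subseteq\im\partial_i=B_{i-1}(X)$; thus $f$ is the natural quotient map, hence surjective, with $\lvert\ker f\rvert=[B_{i-1}(X):B_{i-1}(U)]=\wt(U)/\lvert\WH_{i-1}(X)\rvert$. By exactness $\im b=\ker f$, and the map $\WH_i(X\oplus e)\to H_i(X,U)$ is injective with image $\ker b$, yielding the short exact sequence $0\to\WH_i(X\oplus e)\to H_i(X,U)\to\ker f\to 0$, whence $\lvert H_i(X,U)\rvert=\lvert\WH_i(X\oplus e)\rvert\cdot\lvert\ker f\rvert$. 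Since $\ker f$ is finite, $H_i(X,U)$ is infinite if and only if $\WH_i(X\oplus e)$ is, i.e.\ if and only if $w(\widehat{C_U})=0$ (equivalently $w(C_U)=0$), which is the \emph{moreover} claim. When these orders are finite, substituting $\lvert\WH_i(X\oplus e)\rvert=\lvert w(\widehat{C_U})\rvert$ and $\lvert\ker f\rvert=\wt(U)/\lvert\WH_{i-1}(X)\rvert$ gives $\lvert H_i(X,U)\rvert\,\lvert\WH_{i-1}(X)\rvert=\wt(U)\,\lvert w(\widehat{C_U})\rvert$, as required. The main obstacle is verifying the two structural claims—that the virtual cell cancels so that $H_i(X\oplus e,U\oplus e)\cong H_i(X,U)$, and that $f$ is onto with the stated kernel order—together with checking that adjoining an $(i+1)$-cell does not disturb $\WH_{i-1}$; once these are in place the order count is automatic.
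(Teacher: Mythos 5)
Your proof is correct and follows essentially the same route as the paper: the same virtual $(i+1)$-cell $e$ with $\partial_{i+1}(e)=\widehat{C_U}$, the same appeals to \Cref{ctgcd} and \Cref{winding0}, and the same long exact sequence of the pair $(X\oplus e, U\oplus e)$ with $H_i(X\oplus e,U\oplus e)\cong H_i(X,U)$. The only cosmetic difference is at the tail of the sequence: the paper closes it with $H_{i-1}(X\oplus e,U\oplus e)=H_{i-1}(X,U)=0$ and reads off the alternating product of orders, whereas you verify directly that the inclusion-induced map $\WH_{i-1}(U)\to\WH_{i-1}(X)$ is the quotient map and compute $\lvert\ker f\rvert=\wt(U)/\lvert\WH_{i-1}(X)\rvert$ by index multiplicativity---equivalent bookkeeping that also makes the finiteness argument for the \emph{moreover} claim explicit.
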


\begin{proof}
Let $e$ be a virtual $(i+1)$-cell with $\partial_{i+1}(e)=\widehat{C_U}$. Then, from \Cref{ctgcd}, we get 
$\lvert\WH_i(U\oplus e)\rvert= 1$ (i.e., $\WH_i(U\oplus e)=0$). One can easily check that $H_{i-1}(X\oplus e,U\oplus e)=H_{i-1}(X,U)$. Moreover, we know $H_{i-1}(X,U)=0$.




Now, consider the long exact homology sequence of the pair $(X\oplus e, U\oplus e)$:
$$0 \to \WH_i(X\oplus e)\to H_i(X\oplus e,U\oplus e)\to \WH_{i-1}(U\oplus e) \to \WH_{i-1}(X\oplus e) \to 0.$$


Again, we have $H_i(X\oplus e,U\oplus e)=H_i(X,U)$. Also, one can easily check that $\WH_{i-1}(U\oplus e)=\WH_{i-1}(U)$ and $\WH_{i-1}(X\oplus e)=\WH_{i-1}(X)$. Therefore, the exact sequence is $0\to \WH_i(X\oplus e)\to H_i(X,U)\to \WH_{i-1}(U) \to \WH_{i-1}(X) \to 0$ from which we obtain $$\lvert H_i(X,U)\rvert\lvert\WH_{i-1}(X)\rvert = \lvert\WH_i(X\oplus e)\rvert\lvert\WH_{i-1}(U)\rvert.$$

If $w(C_U)\neq 0$, then we have 
$\lvert\WH_i(X\oplus e)\rvert = \lvert w(\widehat{C_U})\rvert$ by \Cref{winding0}, and $\lvert\WH_{i-1}(U)\rvert=\wt(U)$.
Otherwise, by \Cref{winding0} again, we can show that $w(C_U) = 0$ is equivalent to $\lvert H_{i}(X,U)\rvert = \infty$.
\end{proof}

\begin{ex}[Intuition for the winding number]\label{ex_winding}
In Figure 2-A shows a harmonic cycle $\lambda$ of a cell complex $X$. The relation between a harmonic cycle and the winding number will be discussed in the following section. In Figure 2-B and 2-C, we have cycles $v_B$ and $v_C$ marked red. The winding number for $v_B$ gives $w(v_B)=\pm 1$ where the sign depends on the initial setting to construct $w(\cdot)$. The winding number for $v_C$ is $w(v_C)=0$.

\begin{figure}[h]
\centering
\includegraphics[angle=0, scale=0.5]{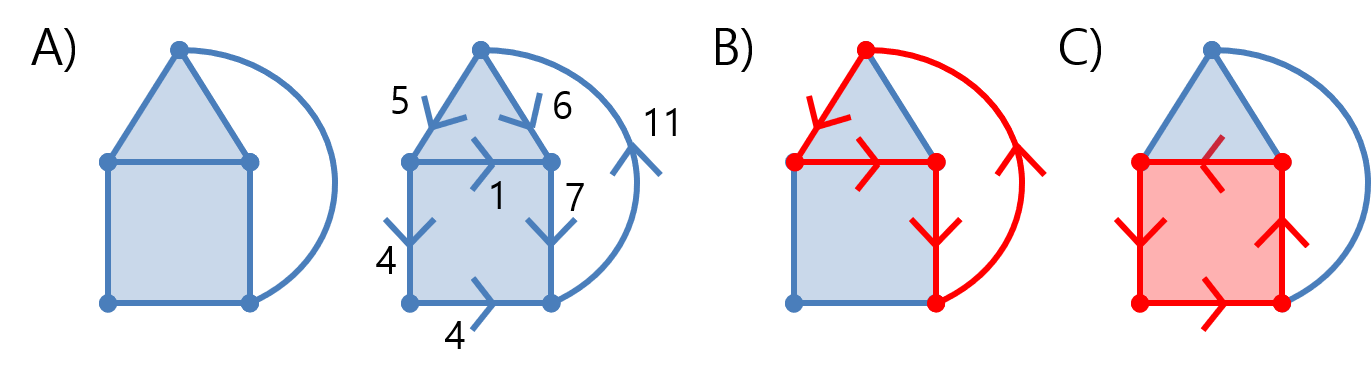}
\caption{A cell complex $X$, a harmonic cycle, and cycles} \label{Figure2}
\end{figure}

\end{ex}

\section{Standard harmonic cycle}\label{section_harmonic_cycle}

In this section, we define the standard harmonic cycle $\lambda$ and establish its formula via the winding number. 

\begin{mydef}\label{hc} Let $X$ be a cell complex with $\rk\WH_{i-1}(X)=0$ and $\rk\WH_i(X)=1$. Then the $i$-th standard harmonic cycle $\lambda$ in $Z_i(X)$ is defined to be
$$\lambda = \sum_U w(C_U) C_U $$
where the summation is over all cycletrees $U\in \mathcal{U}_i=\mathcal{U}_i(X)$.
\end{mydef}
Throughout this section, assume that $X$ is a cell complex with $\rk\WH_{i-1}(X)=0$ and $\rk\WH_i(X)=1$.
The above expression for $\lambda$ may be deduced from \cite[Theorem A]{Ca2} with some scalar multiplication.
Now, we describe the relation between the standard harmonic cycle and the winding number.

\begin{thm}\label{inner}
(Inner product formula)
For any $z\in Z_i(X)$, we have 
$$z\circ \lambda  = w(z) k_i(X)$$
where $\circ$ is the inner product in $C_i(X)$.
\end{thm}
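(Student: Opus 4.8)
The plan is to exploit the fact that \emph{both} sides of the identity are $\Q$-linear functionals of $z\in Z_i(X)$. The right-hand side is linear because the winding number is: by its definition $w$ is the determinant of a matrix whose first column is the coordinate vector of its cycle argument (the remaining columns $[\overline{\partial}_{i+1}]$ being fixed), hence linear in that argument, and $k_i(X)$ is a constant. Writing the standard inner product as $z\circ C_U=C_U^{\,t}z$ and invoking multilinearity of the determinant in its first column, I would first record the reformulation
\begin{equation*}
z\circ\lambda=\sum_U (z\circ C_U)\,w(C_U)=w\Big(\sum_U (z\circ C_U)\,C_U\Big)=w(Gz),\qquad G:=\sum_{U\in\mathcal U_i}C_U C_U^{\,t}.
\end{equation*}
Thus everything reduces to understanding the symmetric matrix $G$ as an operator on $Z_i(X)$.

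The crux is the following identity, which I will isolate as the main lemma:
\begin{equation*}
G=\sum_{U\in\mathcal U_i}C_U C_U^{\,t}=k_i(X)\,P,
\end{equation*}
where $P$ is the orthogonal projection of $C_i(X;\Q)$ onto $Z_i(X)=\ker\partial_i$. Granting this, the theorem is immediate: since $z\in Z_i(X)$ we have $Pz=z$, so $w(Gz)=k_i(X)\,w(Pz)=k_i(X)\,w(z)$. To set up the lemma I would first record the structural facts that pin $G$ down up to its action on $Z_i$: the matrix $G$ is symmetric; each $C_U$ lies in $\ker\partial_i=Z_i(X)$, so $\im G\subseteq Z_i(X)$, and since the minimal cycles $C_U$ already span $Z_i(X)$ we in fact have $\im G=Z_i(X)$; dually, each $C_U$ is orthogonal to $\im\partial_i^{t}=Z_i(X)^{\perp}$, so $G$ annihilates $Z_i(X)^{\perp}$. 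Consequently $G=k_i(X)P$ is equivalent to the single assertion that $G$ restricts to $k_i(X)\cdot\mathrm{Id}$ on $Z_i(X)$, that is, $GC_V=k_i(X)C_V$ for every cycletree $V$.

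To prove the lemma I would compute the entries of $G$ by Cauchy--Binet, exactly as in \Cref{treenumber}. Put $A=[\partial_i]_Z$ with $Z$ a basis of $Z_{i-1}(X)$, so that by \Cref{cyclepart} the component $(C_U)_a$ equals, up to a cofactor sign, the maximal minor $\det(A|_{I(U)\setminus a})$ when $a$ lies in the circuit of $U$, and vanishes otherwise. For a diagonal entry, the cycletrees whose circuit contains $a$ are in bijection with the spanning trees $T$ avoiding $a$ via $U\leftrightarrow T=I(U)\setminus a$; hence
\begin{equation*}
G_{aa}=\sum_{T:\,a\notin T}\det(A|_T)^2=k_i(X)-\sum_{T:\,a\in T}\det(A|_T)^2=k_i(X)\,(1-h_a),
\end{equation*}
and the classical matrix--tree (leverage-score) identity $\sum_{T\ni a}\det(A|_T)^2=k_i(X)\,h_a$, with $h_a=(A^{t}(AA^{t})^{-1}A)_{aa}$, identifies this with $k_i(X)P_{aa}$. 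The off-diagonal entries are treated the same way: cycletrees with both $a$ and $b$ in their circuit correspond to $(r-1)$-subsets $S$ for which $S\cup\{a\}$ and $S\cup\{b\}$ are both bases, and the resulting sum $\sum_S \pm\,\det(A|_{S\cup b})\det(A|_{S\cup a})$ is precisely the all-minors expansion of $k_i(X)P_{ab}=-(A^{t}\,\mathrm{adj}(AA^{t})\,A)_{ab}$.

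The main obstacle I anticipate is not the bijective bookkeeping but the \emph{signs} in the off-diagonal case: I must verify that the cofactor signs $(-1)^{a}$ built into the definition of $C_U$ in \Cref{cyclepart}, together with the reindexing $I(U)\mapsto S$, combine to produce exactly the sign pattern coming from the all-minors expansion of $P$. Once this sign compatibility is established, the lemma $G=k_i(X)P$ holds, and with it the inner product formula $z\circ\lambda=w(z)\,k_i(X)$.
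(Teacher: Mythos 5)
Your strategy is correct, and it takes a genuinely different route from the paper's. The paper proves \Cref{inner} by a direct telescoping computation: starting from $w(z)k_i(X)=\sum_T w(z)\wt(T)^2$, it expands $z$ in the fundamental-cycle basis $\{C_{T\cup e}\mid e\notin T\}$ attached to each spanning tree $T$, uses linearity of $w$, observes that $(C_{T\cup e})_e=\pm\det([\partial_i|_{I(T)}]_Z)$ so that $\wt(T)^2\,z_e/(C_{T\cup e})_e=z_e(C_{T\cup e})_e$ exactly, and then reindexes over edge-rooted cycletrees. You instead isolate the operator identity $G:=\sum_U C_UC_U^t=k_i(X)P$, with $P$ the orthogonal projection onto $Z_i(X)$. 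That identity is true --- it is in substance the transfer-current/projection formula, closely related to Theorem A of \cite{Ca2} which the paper cites --- and it is strictly stronger than what the theorem needs: it produces the harmonic projection $k_i(X)^{-1}Gx$ of an \emph{arbitrary} chain $x$, and it meshes nicely with the paper's remark extending $w$ to a rational winding number on all of $C_i(X)$. Your reduction $z\circ\lambda=w(Gz)$ (valid since each $C_U\in Z_i$ and $w$ is linear on $Z_i$) and your structural bookkeeping (symmetry of $G$, $\im G=Z_i$ because minimal cycles span $Z_i$, annihilation of $Z_i^{\perp}$) are all sound, as are the diagonal bijection $U\leftrightarrow T=I(U)\setminus a$ and the leverage-score identity, which follows from Cauchy--Binet and the matrix determinant lemma.

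The one unfinished step is exactly the one you flagged: the off-diagonal sign compatibility between the cofactor signs of \Cref{cyclepart} and the all-minors expansion of $-(A^t\,\mathrm{adj}(AA^t)\,A)_{ab}$. This verification is routine and would not fail, but you can bypass it entirely by borrowing the paper's device: for each spanning tree $T$ of nonzero weight, the fundamental-cycle decomposition gives, with no sign analysis at all,
$$\wt(T)^2\,z\;=\;\sum_{e\notin T} z_e\,(C_{T\cup e})_e\,C_{T\cup e}\qquad(z\in Z_i(X)),$$
because $\wt(T)^2/(C_{T\cup e})_e=(C_{T\cup e})_e$; summing over $T$ and reindexing by edge-rooted cycletrees yields $k_i(X)\,z=Gz$ for all $z\in Z_i(X)$, which is precisely the restriction of your key lemma that the theorem uses. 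In other words, the paper's chain of equalities, read before the final application of $w$, already proves your lemma on $Z_i(X)$; your entrywise Cauchy--Binet computation buys the full statement $G=k_i(X)P$ on $C_i(X)$ at the price of the sign check.
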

\begin{proof}
It's enough to show the following equalities step by step:
\begin{eqnarray*}
w(z) k_i(X)&=&\sum_{\lvert I\rvert= n} w(z) \det([\partial_i|_I]_Z)^2\\
	&=&\sum_{\lvert I\rvert= n}\ \sum_{I(U)=I\coprod e} 
	\frac{z_e}{C_{U,e}} w(C_U) \det([\partial_i|_I]_Z)^2\\
	&=&\sum_{\lvert I\rvert= n}\ \sum_{I(U)=I\coprod e} 
	z_e C_{U,e} w(C_U)\\
	&=&\sum_{U\in \mathcal{U}_i}\ \sum_{e \in U} 
	z_e C_{U,e} w(C_U) \\
	&=& \sum_{U\in \mathcal{U}_i}(z\circ C_{U}) w(C_{U})\, 
\end{eqnarray*}

The first equality comes from the definition of the weighted tree-number and the proof of \Cref{treenumber}. We have $z=\sum_e \frac{z_e}{C_{U,e}}C_U$ because one can find a basis $\{C_U\mid I(U)=I\coprod e\}$ of $Z_i(X;\Q)$ where $C_{U}$ represents the unique cycle in the union of the spanning trees corresponding to the indexing set $I$ and an edge $e$. Therefore, the second equality holds. The third equality follows because $C_{U,e}=\pm \det([\partial_i|_I]_Z)$ from the construction of $C_U$. By reindexing the edge-rooted cycletrees, we have the fourth equality. The last equality follows from the definition of  inner product.
\end{proof}

\begin{cor}\label{inner_cor}
$\lambda$ is a nonzero element in the harmonic space $\mathcal{H}_i(X).$
\end{cor}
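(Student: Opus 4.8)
The plan is to verify the two assertions separately—that $\lambda$ is harmonic and that it is nonzero—both as consequences of the inner product formula (\Cref{inner}) together with the characterization $\mathscr{H}_i(X)=\ker\partial_i\cap\ker\partial^t_{i+1}$ from \Cref{harmonics}.

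First I would record that $\lambda\in\ker\partial_i$ for free: by construction $\lambda=\sum_U w(C_U)C_U$ is a $\Q$-linear combination of the cycles $C_U\in Z_i(X)$, so it is itself a cycle. It then remains to show $\lambda\in\ker\partial^t_{i+1}$, equivalently $\lambda\perp\im\partial_{i+1}=B_i(X)$. The key observation is that the winding number vanishes on boundaries: for $b\in B_i(X)=\im\overline{\partial}_{i+1}$, the column $[b]_Z$ lies in the span of the columns of $[\overline{\partial}_{i+1}]_Z$, so the $r_i\!\times\! r_i$ matrix $([b]_Z,[\overline{\partial}_{i+1}]_Z)$ is singular and $w(b)=0$. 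Feeding $z=b$ into \Cref{inner} then gives $b\circ\lambda=w(b)k_i(X)=0$ for every boundary $b$, so $\lambda\in(\im\partial_{i+1})^{\perp}=\ker\partial^t_{i+1}$. Combined with $\lambda\in\ker\partial_i$ and \Cref{harmonics}, this yields $\lambda\in\mathscr{H}_i(X)$.

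For nonvanishing, I would exhibit a single $z$ with $z\circ\lambda\neq0$. Two facts make this easy. First, $k_i(X)\neq0$: since $\rk\WH_{i-1}(X)=0$ we have $\rk B_{i-1}=\rk Z_{i-1}=r$, so $[\partial_i]_Z$ has full row rank $r$ and $k_i(X)=\det([\partial_i]_Z[\partial_i]_Z^t)>0$ by \Cref{treenumber}. Second, since $\rk\WH_i(X)=1$ there is a cycle $z\in Z_i(X)$ with $z\notin B_i(X)$; for such $z$ the column $[z]_Z$ together with the columns of $[\overline{\partial}_{i+1}]_Z$ (a basis of $B_i$) spans $Z_i(X)$, so $w(z)=\det([z]_Z,[\overline{\partial}_{i+1}]_Z)\neq0$. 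Applying \Cref{inner} once more gives $z\circ\lambda=w(z)k_i(X)\neq0$, whence $\lambda\neq0$.

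The arguments are short, and I expect no genuine obstacle, since the inner product formula already does the heavy lifting. The two points requiring care are the vanishing $w(b)=0$ on boundaries and the full-rank claim for $[\partial_i]_Z$, both of which are purely linear-algebraic and rest squarely on the standing hypotheses $\rk\WH_{i-1}(X)=0$ and $\rk\WH_i(X)=1$. The only subtlety is to phrase the orthogonality correctly via $(\im\partial_{i+1})^{\perp}=\ker\partial^t_{i+1}$ rather than re-deriving the Hodge decomposition from scratch.
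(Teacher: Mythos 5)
Your proof is correct and rests on the same engine as the paper's---the inner product formula (\Cref{inner})---but it is routed differently at both steps, and it proves more than the paper's printed proof does. For nonvanishing, the paper plugs $z=\lambda$ itself into \Cref{inner} and expands by linearity of $w$, obtaining $\lambda\circ\lambda=w(\lambda)\,k_i(X)=k_i(X)\sum_{U\in\mathcal{U}_i}w(C_U)^2$, then notes that some cycletree has $w(C_U)\neq0$ and a spanning tree exists; this choice has the side benefit of immediately yielding the enumerative corollary $\lambda\circ\lambda/k_i(X)=\sum_U w(C_U)^2$ stated right after. You instead exhibit a single witness $z$ with $w(z)\neq0$, which is equally valid, with one small caveat: ``$z\in Z_i(X)$ with $z\notin B_i(X)$'' is not quite enough when $\WH_i(X)$ has torsion, since a torsion class puts $[z]_Z$ in the $\Q$-span of the columns of $[\overline{\partial}_{i+1}]_Z$ and forces $w(z)=0$; you must pick $z$ whose homology class has infinite order, which exists precisely because $\rk\WH_i(X)=1$, so this is a one-clause fix rather than a gap. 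For harmonicity, your argument---$\lambda$ is a cycle by construction, $w$ vanishes on boundaries because $([b]_Z,[\overline{\partial}_{i+1}]_Z)$ is singular for $b\in B_i(X)$, hence \Cref{inner} gives $\lambda\perp\im\partial_{i+1}$ and so $\lambda\in\ker\partial^t_{i+1}$, and \Cref{harmonics} concludes---is a genuine addition: the paper's proof of the corollary establishes only $\lambda\neq0$ and leaves membership in $\mathscr{H}_i(X)$ implicit (deferring, in effect, to the earlier remark that the formula for $\lambda$ may be deduced from Catanzaro et al.), so your version is the more self-contained of the two. Finally, your justification that $k_i(X)>0$ via full row rank of $[\partial_i]_Z$ and the Gram matrix $[\partial_i]_Z[\partial_i]_Z^t$ is exactly what \Cref{treenumber} and the hypothesis $\rk\WH_{i-1}(X)=0$ provide, and matches the paper's shorthand ``there is a spanning tree.''
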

\begin{proof}
By setting $z=\lambda$ and applying \Cref{inner}, we have
$$\lambda \circ \lambda  = w(\lambda) k_i(X)= k_i(X)\sum_{U\in \mathcal{U}_i} w(C_U)^2.$$ Since the $i$-th homology is nontrivial, there is at least one cycletree $U$ with $w(C_U)\neq 0$, and a spanning tree. Hence the right-hand side of the second equality is nonzero, and we conclude $\lambda\neq 0$.  
\end{proof}

Now, we have an enumeration formula for the cycletrees as in \Cref{section_cycletree}.

\begin{cor}
Let $X$ be a cell complex satisfying $\rk\WH_{i-1}(X)=0$ and $\rk\WH_i(X)=1$. Then, we have
$$\frac{\lambda \circ \lambda}{k_i(X)} = \sum_{U\in \mathcal{U}_i} w(C_U)^2.$$ 
\end{cor}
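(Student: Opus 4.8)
The plan is to recognize that this statement is essentially a reorganization of the identity already derived in the proof of \Cref{inner_cor}, combined with the observation that the winding number is a linear functional. First I would record that $w\colon Z_i(X)\to\Z$ is linear: from its defining formula $w(z)=\det([z]_Z,[\overline{\partial}_{i+1}]_Z)$, the determinant is linear in its first column $[z]_Z$, and the coordinate assignment $z\mapsto[z]_Z$ is itself a linear map, so the composite $w$ is linear on the cycle space $Z_i(X)$.

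Next I would apply the inner product formula \Cref{inner} with the specific choice $z=\lambda$, which yields $\lambda\circ\lambda = w(\lambda)\,k_i(X)$. Using the definition $\lambda=\sum_{U\in\mathcal{U}_i} w(C_U)\,C_U$ together with the linearity of $w$ just noted, I would expand
$$w(\lambda)=w\Big(\sum_{U\in\mathcal{U}_i} w(C_U)\,C_U\Big)=\sum_{U\in\mathcal{U}_i} w(C_U)\,w(C_U)=\sum_{U\in\mathcal{U}_i} w(C_U)^2,$$
so that $\lambda\circ\lambda = k_i(X)\sum_{U\in\mathcal{U}_i} w(C_U)^2$.

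Finally I would divide through by $k_i(X)$ to obtain the claimed formula. This division is legitimate because $k_i(X)\neq 0$: since $\rk\WH_{i-1}(X)=0$ and $\rk\WH_i(X)=1$, there exists at least one $i$-dimensional spanning tree of nonzero weight, whence by \Cref{treenumber} the tree number $k_i(X)$ is a sum of positive squares and is strictly positive. The only delicate point is precisely this nonvanishing of $k_i(X)$, which licenses the division; beyond that there is no genuine obstacle, since all of the substantive work is carried by the inner product formula \Cref{inner}, which is assumed already proved. In this sense the corollary is a direct unpacking of \Cref{inner_cor} rather than an independent result.
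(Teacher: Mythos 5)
Your proof is correct and takes essentially the same route as the paper: the chain of equalities $\lambda\circ\lambda = w(\lambda)\,k_i(X) = k_i(X)\sum_{U\in\mathcal{U}_i} w(C_U)^2$ is precisely the computation appearing in the paper's proof of \Cref{inner_cor}, from which the corollary follows by dividing by $k_i(X)$. Your explicit checks of the linearity of $w$ and of the nonvanishing $k_i(X)\neq 0$ (via the existence of a spanning tree of nonzero weight under $\rk\WH_{i-1}(X)=0$) are left implicit in the paper but do not alter the argument.
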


Note that the left-hand side can be calculated in polynomial time, i.e., $\lambda$ can be determined using \Cref{inner}.

\begin{ex}[Inner product formula]
In \Cref{ex_winding}, the given harmonic cycle is actually the standard harmonic cycle. Thus, we have the standard harmonic cycle $\lambda$ of a cell complex $X$, and cycles $v_B$ and $v_C$. The left-hand side of the inner product formula for $v_B$ is $v_B\circ \lambda =11+5+1+7=24$. And the right-hand side is $w(v_B) k_1(X) = 1\times 24$ where $k_1(X) = 24$. Moreover, the inner product formula for $v_C$ holds since $v_C\circ \lambda =-7-1+4+4=0$ and $w(v_C) = 0$.
\end{ex}

\begin{remark}
From \Cref{inner}, we can extend the domain of the winding number map from a cycle space to a chain group, i.e., $w(z) \coloneqq z\circ \lambda / k_i(X)$ where $z\in C_i(X)$. We will call it the rational winding number.
\end{remark}

\section{Duality and dual spanning tree}\label{section_dual_spanning_tree}
In this section, we define a dual spanning tree of a cell complex $X$ and discuss properties in \Cref{condition}. To that end, we first define a complement operator. Note that, to analyze a certain hole, we focus on the case when \Cref{condition} holds. If we have multiple holes, we can deal each holes separately as in \cite{KW}.




\begin{mydef}\label{def_complement}
Let $X$ be a cell complex. Recall that $\mathcal{S}_i$ is a collection of subcomplexes $Y$ of $X$ such that $Y=X^{i-1}\cup I(Y)$ for a subset $I(Y)\subseteq (X)_i$. The complement operator $\overline{\ \cdot\ }$ in dimension $i$ is a bijection defined on $\mathcal{S}_i$ by $\overline{Y}=X^{i-1} \cup I(\overline{Y})$ where $(X)_i=I(\overline{Y}) \coprod I(Y)$, or equivalently, $(\overline{Y})_i=\overline{(Y)_i}$.
\end{mydef}

It is trivial to show $Y=\overline{\overline{Y}}$. We will specify the dimension for a complement operator if necessary. Now, we introduce the dual spanning tree $T^*$ in a cell complex $X$, which is the conceptual dual to the spanning tree.

Recall that $\rk\WH_{i-1}(X)=0$ means connectedness in dimension $i$. Similarly, we need $\rk\WH_{i+1}(X)=0$ for the dual concept of a spanning tree.
Assume $X$ is the cell complex with $\rk\WH_{i+1}(X)=0$ throughout this section.

\begin{mydef}
An $i$-dimensional dual spanning tree $T^*$ is an element of $\mathcal{S}_i$ with $I=I(T^*)$ satisfying
$\lvert I\rvert=\rk Z^{i+1}(X)$. We define the weight of $T^*$ to be $\wt(T^*)=\lvert\det([\delta_{i}|_I]_Z)\rvert$ where $Z$ is a basis of $Z^{i+1}(X)$.
\end{mydef}
In this paper, $*$  is not an operator but a symbol for a dual object. This dual version of a spanning tree is related to the concepts in \cite{Ca2,DKM}.


Now, let $\mathcal{T}^i=\mathcal{T}^i(X)$ be the set of all $i$-dimensional dual spanning trees in $X$. We define the $i$-th dual tree number $k^i(X)$ by 
$$k^i(X)=\det([\delta_{i}]_Z [\delta_{i}]^t_Z)$$
using the Cauchy-Binet formula as in \Cref{treenumber}. In the following lemma, $\delta_{(X,A),i}$ where $A$ is a subcomplex of $X$ denotes a coboundary operator for the \emph{relative} cochain complex $\{C^{i}(X,A)=C^{i}(X)/C^{i}(A)\}$ induced by $\delta_{X,i}$.

\begin{lem}\label{prop1}
$\delta_{(X,\overline{T^*}),i} = \delta_{X,i}|_I$ where the column indexing set $I$ represents a dual spanning tree $T^*$.
\end{lem}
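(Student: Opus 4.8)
The plan is to compute both the domain and the codomain of the relative coboundary operator $\delta_{(X,\overline{T^*}),i}\colon C^i(X,\overline{T^*})\to C^{i+1}(X,\overline{T^*})$ explicitly in terms of cells, and then to evaluate the induced map on a natural basis. First I would unwind the domain. By \Cref{def_complement} the complement is taken in dimension $i$, so $\overline{T^*}=X^{i-1}\cup I(\overline{T^*})$ with $I(\overline{T^*})=(X)_i\setminus I$. Hence in dimension $i$ the cochains on $\overline{T^*}$ are spanned exactly by the $i$-cells lying in $(X)_i\setminus I$, and the quotient $C^i(X,\overline{T^*})=C^i(X)/C^i(\overline{T^*})$ carries a natural basis given by the classes $[e]$ of the $i$-cells $e\in I=I(T^*)$.

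The key observation for the codomain is that $\overline{T^*}\in\mathcal{S}_i$ lies inside the $i$-skeleton $X^i$, so it contains no $(i+1)$-cells. Therefore $C^{i+1}(\overline{T^*})=0$, which gives $C^{i+1}(X,\overline{T^*})=C^{i+1}(X)$ with the quotient projection equal to the identity. This is the step where the membership $T^*\in\mathcal{S}_i$ (and hence $\overline{T^*}\in\mathcal{S}_i$) is essential: it guarantees that no quotienting occurs in the target dimension, so that the induced operator lands in the full $C^{i+1}(X)$ without any collapsing.

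Finally I would evaluate the induced map on the basis found above. Recalling that the coboundary in dimension $i$ is $\delta_{X,i}=\partial^t_{i+1}$, whose columns are indexed by $(X)_i$ and whose rows are indexed by $(X)_{i+1}$, the relative coboundary by definition sends $[e]$ to the class of $\delta_{X,i}(e)$. Since the codomain projection is the identity by the previous paragraph, this class is $\delta_{X,i}(e)$ itself, which is precisely the column of $\delta_{X,i}$ indexed by $e$. Letting $e$ range over $I$, the matrix of $\delta_{(X,\overline{T^*}),i}$ with respect to the bases $\{[e]:e\in I\}$ and $(X)_{i+1}$ is exactly the submatrix $\delta_{X,i}|_I$ consisting of the columns indexed by $I$, which is the asserted equality.

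There is no serious analytic or algebraic obstacle here; the content is entirely bookkeeping about the relative cochain complex. The one point that genuinely requires care — and the one I would state explicitly — is the asymmetry between the two dimensions: in dimension $i$ the complement $\overline{T^*}$ carries cells, so the domain is a genuine quotient retaining only the cells of $I$, whereas in dimension $i+1$ it carries none, so the codomain is untouched. Keeping the indexing conventions consistent (that $\delta_{X,i}$ has columns indexed by $(X)_i$) is the other thing to track throughout.
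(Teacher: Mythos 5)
Your proof is correct and follows essentially the same route as the paper's: both identify $C^i(X,\overline{T^*})=C^i(X)/C^i(\overline{T^*})$ with the free summand spanned by the $i$-cells in $I(T^*)$ via the canonical splitting $C^{i}(X)=C^i(T^*)\oplus C^i(\overline{T^*})$, so that the induced operator is exactly the column submatrix $\delta_{X,i}|_I$. The only difference is that you spell out the codomain bookkeeping --- $C^{i+1}(\overline{T^*})=0$ because $\overline{T^*}\subseteq X^i$ carries no $(i+1)$-cells, so no quotienting occurs in the target --- which the paper's one-line proof leaves implicit.
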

\begin{proof}
This is because $C^{i}(X)$, $C^i(T^*)$, and $C^i(\overline{T^*})$ are free $\Z$-modules indexed by the $i$-cells of $X$, $T^*$ and $\overline{T^*}$, respectively. Thus, we have $C^{i}(X)=C^i(T^*)\oplus C^i(\overline{T^*})$ and $C^{i}(X,\overline{T^*})\cong C^i(T^*)$.
\end{proof}

The weight for a dual spanning tree $T^*$ can be written via (co)homology in a similar way as that of a spanning tree $T$ using the following lemma.
\begin{lem}\label{lem_prop2} $\lvert H^{i+1}(X,\overline{T^*})\rvert=\lvert H_{i}(X,\overline{T^*})\rvert$ for a dual spanning tree $T^*$.
\end{lem}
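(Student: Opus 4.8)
The plan is to apply the Universal Coefficient Theorem to the relative chain complex of the pair $(X,\overline{T^*})$, thereby reducing the claimed equality of orders to two finiteness statements about the relative (co)homology of the pair.

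First I would make the relative complex explicit. Since $\overline{T^*}\in\mathcal{S}_i$ contains the whole $(i-1)$-skeleton and no cell of dimension $>i$, the relative chain complex $C_*(X,\overline{T^*})$ vanishes below degree $i$, coincides with $C_j(X)$ for $j\geq i+1$, and has $C_i(X,\overline{T^*})\cong C_i(T^*)\cong\Z^r$ with $r=\rk Z^{i+1}(X)=\lvert I\rvert$, where $I=I(T^*)$. By \Cref{prop1} the relative coboundary in degree $i$ is $\delta_{X,i}|_I$, and its transpose is the relative differential $\overline{\partial}_{i+1}\colon C_{i+1}(X)\to\Z^r$. Hence $H^{i+1}(X,\overline{T^*})=Z^{i+1}(X)/\im(\delta_{X,i}|_I)=\coker[\delta_i|_I]_Z$, whose order is $\lvert\det[\delta_i|_I]_Z\rvert=\wt(T^*)$, while $H_i(X,\overline{T^*})=\Z^r/\im\overline{\partial}_{i+1}$ and $H_{i+1}(X,\overline{T^*})=\ker\overline{\partial}_{i+1}/\im\partial_{i+2}$.

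The key step is a rank count showing that $H_i(X,\overline{T^*})$ and $H_{i+1}(X,\overline{T^*})$ are finite together, governed by the single quantity $\wt(T^*)$. Using the co-connectedness hypothesis $\rk\WH_{i+1}(X)=0$, i.e. $\rk Z_{i+1}(X)=\rk B_{i+1}(X)$, together with $\rk C_{i+1}(X)=\rk Z_{i+1}(X)+\rk B_i(X)$ and $\rk Z^{i+1}(X)=\rk C_{i+1}(X)-\rk B_{i+1}(X)$, I obtain the identity $r=\rk B_i(X)$. Writing $\rho=\rk\overline{\partial}_{i+1}$, the free ranks of the two relative groups then come out equal, namely $\rk H_i(X,\overline{T^*})=\rk H_{i+1}(X,\overline{T^*})=r-\rho$, which vanishes exactly when $\overline{\partial}_{i+1}$ (equivalently the square matrix $[\delta_i|_I]_Z$) has full rank, i.e. when $\wt(T^*)\neq0$. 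In the degenerate case $\wt(T^*)=0$ both groups have positive free rank, so both orders are infinite and the asserted equality reads $\infty=\infty$.

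Assuming $\wt(T^*)\neq0$, so that both relative groups are finite, I finish with the Universal Coefficient Theorem for the free complex $C_*(X,\overline{T^*})$:
\[
0 \to \Ext(H_i(X,\overline{T^*}),\Z) \to H^{i+1}(X,\overline{T^*}) \to \Hom(H_{i+1}(X,\overline{T^*}),\Z) \to 0.
\]
Since $H_{i+1}(X,\overline{T^*})$ is finite we have $\Hom(H_{i+1}(X,\overline{T^*}),\Z)=0$, so $H^{i+1}(X,\overline{T^*})\cong\Ext(H_i(X,\overline{T^*}),\Z)\cong\Torsion(H_i(X,\overline{T^*}))$; and since $H_i(X,\overline{T^*})$ is itself finite, this torsion subgroup is all of $H_i(X,\overline{T^*})$. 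Taking orders gives $\lvert H^{i+1}(X,\overline{T^*})\rvert=\lvert H_i(X,\overline{T^*})\rvert$, both in fact equal to $\wt(T^*)$. I expect the main obstacle to be the bookkeeping of the middle step — correctly pinning down the ranks of the relative groups from $\rk\WH_{i+1}(X)=0$ and verifying that finiteness of $H_i$ and of $H_{i+1}$ occur simultaneously — rather than the UCT application itself, which is routine once finiteness is established.
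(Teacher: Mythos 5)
Your proposal is correct and follows essentially the same route as the paper's proof: a rank count driven by $\rk\WH_{i+1}(X)=0$ showing that $H_{i}(X,\overline{T^*})$ and $H_{i+1}(X,\overline{T^*})$ have equal free rank (zero precisely when $\wt(T^*)\neq 0$, infinite orders otherwise), followed by the Universal Coefficient Theorem to identify $H^{i+1}(X,\overline{T^*})$ with the torsion of $H_{i}(X,\overline{T^*})$. Your write-up is merely more explicit than the paper's — you spell out the relative complex and pin the common order down as $\wt(T^*)=\lvert\det([\delta_{i}|_I]_Z)\rvert$ — but it is the same argument in concrete matrix form.
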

\begin{proof}
Let $I=I(T^*)$. The assumption $\rk \WH^{i+1}(X)=0$ implies $\lvert I\rvert=\rk Z^{i+1}(X)=\rk B^{i+1}(X)$. 
Since $\rk C_{i+1}(X)=\rk C_{i+1}(X,\overline{T^*})$, $\rk B_{i+1}(X)=\rk B_{i+1}(X,\overline{T^*})$, and $B^{i}(X,\overline{T^*})=0$ from \Cref{decompostion}, we know
\begin{equation}\label{eq_1}
\rk \WH_{i+1}(X)+\rk B^{i+1}(X) =\rk H_{i+1}(X,\overline{T^*})+\rk B^{i+1}(X,\overline{T^*})
\end{equation}
\begin{equation}\label{eq_2}
\lvert I\rvert = \rk C_{i}(X,\overline{T^*})=\rk B_{i}(X,\overline{T^*})+\rk H_{i}(X,\overline{T^*})
\end{equation}
By \Cref{eq_1,eq_2}, we have $\rk \WH^{i+1}(X,\overline{T^*})=\rk \WH_{i}(X,\overline{T^*})$.

Furthermore, we know $\Torsion(\WH^{i+1}(X,\overline{T^*})) = \Torsion(\WH_{i}(X,\overline{T^*}))$ by the universal coefficient theorem. Thus, we have $\lvert \WH^{i+1}(X,\overline{T^*})\rvert=\lvert \WH_{i}(X,\overline{T^*})\rvert$.
\end{proof}

\begin{prop}\label{prop2}
$\wt(T^*)=\lvert H_{i}(X,\overline{T^*})\rvert$ when $\wt(T^*)\neq 0$. Moreover, we have  $\wt(T^*)=0$ if and only if $\lvert H_{i}(X,\overline{T^*})\rvert =\infty$.
\end{prop}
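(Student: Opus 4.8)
The plan is to identify $\wt(T^*)$ with the order of the relative cohomology group $H^{i+1}(X,\overline{T^*})$ and then transfer to $H_i(X,\overline{T^*})$ using \Cref{lem_prop2}. Recall that $\wt(T^*)=\lvert\det([\delta_i|_I]_Z)\rvert$, where $I=I(T^*)$ and $Z$ is a basis of $Z^{i+1}(X)$; thus the entire argument reduces to reading this determinant homologically.

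First I would examine the relative cochain complex $\{C^j(X,\overline{T^*})\}$ around degree $i+1$. Since $\overline{T^*}\in\mathcal{S}_i$ has no cells of dimension exceeding $i$, we have $C^j(\overline{T^*})=0$ for $j\geq i+1$, so $C^j(X,\overline{T^*})=C^j(X)$ for $j\geq i+1$ and the relative coboundary maps there coincide with those of $X$. Hence the relative cocycles in degree $i+1$ are exactly $Z^{i+1}(X)$, and $\im\delta_{(X,\overline{T^*}),i}\subseteq Z^{i+1}(X)$ because $\delta\delta=0$. Writing $\delta_{(X,\overline{T^*}),i}=\delta_{X,i}|_I$ via \Cref{prop1}, I would conclude
$$H^{i+1}(X,\overline{T^*})=Z^{i+1}(X)\big/\im(\delta_{X,i}|_I).$$
Expressing this in the basis $Z$ identifies $Z^{i+1}(X)$ with $\Z^{\lvert I\rvert}$ and the image with the column span of the integer matrix $[\delta_i|_I]_Z$. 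Because $\lvert I\rvert=\rk Z^{i+1}(X)$ for a dual spanning tree, this matrix is square, so by the Smith normal form (exactly as in the computation of spanning-tree weights) its cokernel has order $\lvert\det([\delta_i|_I]_Z)\rvert$ when the determinant is nonzero and is infinite precisely when it vanishes. This gives $\wt(T^*)=\lvert H^{i+1}(X,\overline{T^*})\rvert$ when $\wt(T^*)\neq 0$, and $\wt(T^*)=0$ iff $\lvert H^{i+1}(X,\overline{T^*})\rvert=\infty$. Applying \Cref{lem_prop2} to replace $H^{i+1}(X,\overline{T^*})$ by $H_i(X,\overline{T^*})$ then yields the proposition.

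The step I expect to be most delicate is the homological identification in the second paragraph: one must check carefully that $\overline{T^*}$ contributes nothing to the relative complex in degrees $i+1$ and higher, so that $Z^{i+1}(X,\overline{T^*})$ is genuinely $Z^{i+1}(X)$, and that $\im\delta_{(X,\overline{T^*}),i}$ lands inside $Z^{i+1}(X)$, so that the displayed quotient really is the relative cohomology rather than a cokernel requiring correction. Once this is settled, the determinant-versus-cokernel order computation and the appeal to \Cref{lem_prop2} are routine.
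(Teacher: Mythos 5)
Your proposal is correct and follows essentially the same route as the paper: identify $\delta_{(X,\overline{T^*}),i}$ with $\delta_{X,i}|_I$ via \Cref{prop1}, recognize $\lvert\det([\delta_i|_I]_Z)\rvert$ as the order of the cokernel $Z^{i+1}(X)/\im(\delta_{X,i}|_I)=\WH^{i+1}(X,\overline{T^*})$ (the paper's ``volume of a lattice'' step, which you usefully make explicit via Smith normal form and the observation that $C^j(\overline{T^*})=0$ for $j\geq i+1$), and then pass to $\WH_i(X,\overline{T^*})$ by \Cref{lem_prop2}, with the zero-weight case handled by the same determinant-vanishes-iff-cokernel-infinite dichotomy. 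The ``delicate step'' you flag is sound, since $\overline{T^*}\in\mathcal{S}_i$ lies in the $i$-skeleton.
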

\begin{proof}
Assume $\wt(T^*)\neq 0$. Then we will show the following equalities:
\begin{eqnarray*}
\wt(T^*)&=&\lvert\det([\delta_{X,i}|_I]_Z)\rvert \\
	&=&\lvert \det([\delta_{(X,\overline{T^*}),i}]_Z)\rvert \\
	&=&\lvert \ker\delta_{(X,\overline{T^*}),i+1} / \im\delta_{(X,\overline{T^*}),i}\rvert \\
	&=&\lvert \WH^{i+1}(X,\overline{T^*})\rvert\\
	&=&\lvert \WH_{i}(X,\overline{T^*})\rvert
\end{eqnarray*}
The second equality comes from \Cref{prop1}. The third equality holds because it is the volume of a lattice. The fourth equality comes from the definition of relative homology. To get the last equality, use \Cref{lem_prop2}. 

Now, if $\wt(T^*)=0$, we have $\det([\delta_{(X,\overline{T^*}),i}]_Z)=0$, and $\lvert \ker\delta_{(X,\overline{T^*}),i+1} / \im\delta_{(X,\overline{T^*}),i}\rvert$ is infinite. Thus, we have $\lvert H_{i}(X,\overline{T^*})\rvert =\infty$. The converse of this statement can be proved similarly.
\end{proof}

We will show that $\overline{T^*}$ is a cycletree previously defined, which establishes a relation between the set of cycletrees $\mathcal{U}_i$ and that of dual spanning trees $\mathcal{T}^i$. For the rest of this paper, we will frequently use the following condition to be applied simultaneously to the (dual) spanning trees, (dual) cycletrees, and  winding(cutting) numbers.

\begin{cond}[Unicycle condition]\label{condition}
Let $X$ be a cell complex with 
\begin{equation}
\rk\WH_{i+1}(X)=0,\ \rk\WH_i(X)=1\textit{, and } \rk\WH_{i-1}(X)=0.
\end{equation}
\end{cond}

\begin{thm}\label{relation1} Let $X$ be a cell complex under \Cref{condition}. The map between $\mathcal{U}_i$ and $\mathcal{T}^i$ given by $U\mapsto\overline{U}=T^*$ is a bijection. Moreover, we have the relation $$\lvert w(C_U)\rvert=\wt(U)\lvert w(\widehat{C_U})\rvert=\wt(T^*)\lvert \WH_{i-1}(X)\rvert.$$ 
\end{thm}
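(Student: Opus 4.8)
The plan is to handle the two claims in turn. The bijection is a pure dimension count, and the chain of equalities is obtained by splicing \Cref{winding} and \Cref{prop2} together through the common quantity $\lvert H_i(X,U)\rvert$, using the fact that the complement operator of \Cref{def_complement} is an involution, so that $\overline{T^*}=\overline{\overline U}=U$.

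For the bijection, recall that $\overline{\ \cdot\ }$ is already a bijection on $\mathcal{S}_i$, so it suffices to check that it carries the defining cardinality condition $\lvert I(U)\rvert=\rk Z_{i-1}(X)+1$ of $\mathcal{U}_i$ onto the condition $\lvert I(T^*)\rvert=\rk Z^{i+1}(X)$ of $\mathcal{T}^i$. Since $\lvert I(U)\rvert+\lvert I(\overline U)\rvert=\lvert (X)_i\rvert$, everything reduces to the single identity $(\rk Z_{i-1}(X)+1)+\rk Z^{i+1}(X)=\lvert (X)_i\rvert$. I would prove this from the reduced-homology rank formula $\rk\WH_j(X)=\lvert (X)_j\rvert-\rk\partial_j-\rk\partial_{j+1}$ applied at $j=i-1,i,i+1$. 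The hypotheses $\rk\WH_{i-1}(X)=0$ and $\rk\WH_{i+1}(X)=0$ of \Cref{condition} give $\rk Z_{i-1}(X)=\rk\partial_i$ and $\rk Z^{i+1}(X)=\rk\partial_{i+1}$, and then $\rk\WH_i(X)=1$ gives $\lvert (X)_i\rvert=\rk\partial_i+\rk\partial_{i+1}+1$, which is exactly the required identity. Hence $U\mapsto\overline U=T^*$ restricts to a bijection $\mathcal{U}_i\to\mathcal{T}^i$.

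For the numerical relation, the first equality is immediate: $w(z)=\det([z]_Z,[\overline\partial_{i+1}]_Z)$ is linear in $z$, and $C_U=\wt(U)\widehat{C_U}$, so $w(C_U)=\wt(U)\,w(\widehat{C_U})$ and therefore $\lvert w(C_U)\rvert=\wt(U)\lvert w(\widehat{C_U})\rvert$ since $\wt(U)\ge 0$. For the second equality I would first take the main case $\wt(U)\neq 0$ and $w(C_U)\neq 0$. Then \Cref{winding} yields $\wt(U)\lvert w(\widehat{C_U})\rvert=\lvert H_i(X,U)\rvert\,\lvert\WH_{i-1}(X)\rvert$ with $\lvert H_i(X,U)\rvert$ finite. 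The bridge is the involution: since $T^*=\overline U$ we have $\overline{T^*}=U$, so $\lvert H_i(X,U)\rvert=\lvert H_i(X,\overline{T^*})\rvert$, and \Cref{prop2} identifies this finite value with $\wt(T^*)$. Substituting gives $\wt(U)\lvert w(\widehat{C_U})\rvert=\wt(T^*)\lvert\WH_{i-1}(X)\rvert$, completing the chain.

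The remaining work, and what I expect to be the main obstacle, is matching the degenerate cases so that $w(C_U)=0$, $\lvert H_i(X,U)\rvert=\infty$, and $\wt(T^*)=0$ are seen to be simultaneous. When $\wt(U)\neq 0$ but $w(C_U)=0$, linearity forces $w(\widehat{C_U})=0$, so the left two terms vanish, while the ``iff infinite'' clause of \Cref{winding} gives $\lvert H_i(X,U)\rvert=\infty$, hence $\wt(T^*)=0$ by \Cref{prop2}; all three terms vanish. The genuinely delicate case is $\wt(U)=0$, where $C_U=0$ makes the left two terms vanish but \Cref{winding} does not apply directly. Here I would argue homologically: $\wt(U)=0$ forces $Z_i(U)=Z_i(X)\cap\operatorname{span}(I(U))$ to have rank at least $2$, and since $Z_i(X)=B_i(X)\oplus\mathscr{H}_i(X)$ with $\rk\mathscr{H}_i(X)=1$ by \Cref{decompostion,hodge}, this forces $\operatorname{span}(I(U))\cap\im\partial_{i+1}\neq 0$, so $H_i(X,U)$ has positive rank and is infinite; \Cref{prop2} then gives $\wt(T^*)=0$, and the identity again reads $0=0$ since $\lvert\WH_{i-1}(X)\rvert$ is finite. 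Everything else is bookkeeping with the rank formula and the two prior propositions.
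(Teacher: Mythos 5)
Your proof is correct, and its overall architecture is the same as the paper's: the bijection is a dimension count (the paper phrases it via the Hodge decomposition of \Cref{decompostion}, setting $n=\rk\partial_i^t$, $m=\rk\partial_{i+1}$, $h=1$, which is exactly your rank identity $(\rk Z_{i-1}(X)+1)+\rk Z^{i+1}(X)=\lvert (X)_i\rvert$ in different notation), and the numerical relation is obtained, as you do, by splicing \Cref{winding} and \Cref{prop2} through the common quantity $\lvert H_i(X,U)\rvert$ with $U=\overline{T^*}$. The one place you genuinely diverge is the degenerate case $\wt(U)=0$, which you correctly identify as the crux. The paper proves the contrapositive $\wt(T^*)\neq 0\Rightarrow\wt(U)\neq 0$ in one line from the long exact sequence of the pair $(X,U)$: exactness of $H_i(X,U)\to\WH_{i-1}(U)\to\WH_{i-1}(X)$ gives $\rk\WH_{i-1}(U)\le\rk\WH_{i-1}(X)+\rk H_i(X,U)=\rk H_i(X,U)$, so finiteness of $H_i(X,U)$ (i.e.\ $\wt(T^*)\neq 0$, by \Cref{prop2}) forces $\WH_{i-1}(U)$ finite. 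Your route instead goes forward: $\wt(U)=0$ makes $\rk\big(\partial_i|_{I(U)}\big)\le\rk Z_{i-1}(X)-1$, so $\rk Z_i(U)\ge 2$, and since $B_i(X;\Q)$ has codimension one in $Z_i(X;\Q)$ by \Cref{decompostion,hodge}, some nonzero boundary is supported on $I(U)$. This is valid, but your final inference ``so $H_i(X,U)$ has positive rank'' is asserted rather than proved; it needs one more line, e.g.\ $H_i(X,U)=\Z^{\overline{I(U)}}/\pi(\im\partial_{i+1})$ (as $C_{i-1}(X,U)=0$), where the projection $\pi$ onto the coordinates outside $I(U)$ kills your nonzero boundary, so $\rk\pi(\im\partial_{i+1})\le m-1<\lvert\overline{I(U)}\rvert$. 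Note that the paper's long-exact-sequence inequality delivers this same conclusion for free (with $\rk\WH_{i-1}(X)=0$ it gives $\rk H_i(X,U)\ge\rk\WH_{i-1}(U)\ge 1$ directly), so the homological phrasing is slightly more economical, while your linear-algebraic version has the merit of making the mechanism --- a boundary hiding inside the support of $U$ --- completely explicit.
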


\begin{proof}
For the first statement, we can easily check that the complement operator is bijective. Thus, it is sufficient to show that the image of the complement operator on $\mathcal{U}_i$ is $\mathcal{T}^i$. Recall that we have $C_{i}(X;\mathbb{Q})=\mathcal{H}_{i}(X)\oplus \im\partial_{i}^{t}\oplus\im\partial_{i+1}$ from the Hodge decomposition in \Cref{decompostion}. Let $n=\rk\partial^t_{i}$, $m=\rk\partial_{i+1}$, and $h=\rk\mathcal{H}_{i}(X)$. Then, we have $n=\rk B_{i-1}$ and $m=\rk B^{i+1}$. Due to \Cref{condition}, we have $n=\rk Z_{i-1}$, $m=\rk Z^{i+1} $, and $h=1$. We know that a cycletree $U$ is an element of $\mathcal{S}_i$ with $\lvert I(U) \rvert = n+1$, and a dual spanning tree $T^*$ is an element $\mathcal{S}_i$ with $\lvert I(T^*)\rvert=m$. 

The second statement holds from \Cref{winding} and \Cref{prop2} if $\wt(U)\neq 0$. Now, it is enough to show that $\wt(T^*)=0$ if $\wt(U)=0$ where $U=\overline{T^*}$. Consider the contrapositive, i.e., $\wt(T^*)\neq 0 \Rightarrow \wt(U)\neq 0$. We have a long exact sequence for the pair $(X,U)$, $H_i(X,U)\xrightarrow{\alpha}\WH_{i-1}(U)\xrightarrow{\beta} \WH_{i-1}(X)$. By using $\WH_{i-1}(U) / \ker\beta \cong \im\beta$ and $\im\alpha\cong \ker\beta$, we get $$\rk\WH_{i-1}(U) \le \rk\WH_{i-1}(X) + \rk H_i(X,U)$$
Because $\wt(T^*)\neq 0$ means $\rk H_i(X,U)=0$ by \Cref{prop2}, and $\rk\WH_{i-1}(X)=0$ by \Cref{condition}, we have $\rk\WH_{i-1}(U)=0$, i.e., $\wt(U)\neq 0$.
\end{proof}
Note that we have 
\begin{equation}\label{iff}
\wt(U)=0 \Rightarrow w(C_U)= 0 \Leftrightarrow \wt(T^*)= 0.
\end{equation}
i.e., the map given by $U\mapsto\overline{U}$ is a bijection when it is restricted to the elements with nonzero weights and nonzero winding numbers.




Finally, we have an enumeration formula for the cycletrees as in \Cref{section_cycletree}.
\begin{cor} Let $X$ be a cell complex under \Cref{condition}. Then
$$k^i(X)\lvert \WH_{i-1}(X)\rvert^2 = \sum_{U\in \mathcal{U}_i} w(C_U)^2.$$
\end{cor}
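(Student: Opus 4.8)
The plan is to derive the identity by composing the two facts already packaged in \Cref{relation1} — the complementation bijection $U\mapsto\overline{U}=T^*$ and the norm relation $\lvert w(C_U)\rvert=\wt(T^*)\lvert\WH_{i-1}(X)\rvert$ — and then to recognize the resulting sum of squared weights as the dual tree number $k^i(X)$. Since $X$ is assumed to satisfy \Cref{condition}, all the analytic content is already available; what remains is a careful bookkeeping of the summation, including the degenerate terms.

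First I would establish the pointwise identity
$$w(C_U)^2=\wt(\overline{U})^2\,\lvert\WH_{i-1}(X)\rvert^2$$
for every cycletree $U\in\mathcal{U}_i$, abbreviating $T^*=\overline{U}$. When $w(C_U)\neq 0$ this is immediate by squaring the norm relation in \Cref{relation1}. When $w(C_U)=0$, the implication chain \eqref{iff} forces $\wt(T^*)=0$, so both sides vanish and the identity persists. Thus the pointwise identity holds across all of $\mathcal{U}_i$ with no restriction on the weights.

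Next I would sum this pointwise identity over all $U\in\mathcal{U}_i$ and pull out the constant factor $\lvert\WH_{i-1}(X)\rvert^2$, obtaining
$$\sum_{U\in\mathcal{U}_i} w(C_U)^2=\lvert\WH_{i-1}(X)\rvert^2\sum_{U\in\mathcal{U}_i}\wt(\overline{U})^2.$$
Because $U\mapsto\overline{U}=T^*$ is a bijection from $\mathcal{U}_i$ onto $\mathcal{T}^i$ by \Cref{relation1}, the last sum reindexes as $\sum_{T^*\in\mathcal{T}^i}\wt(T^*)^2$. Finally, the Cauchy-Binet expansion of $k^i(X)=\det([\delta_i]_Z[\delta_i]^t_Z)$ — the dual analogue of \Cref{treenumber} — identifies this sum with $k^i(X)$, yielding $\sum_{U\in\mathcal{U}_i} w(C_U)^2=\lvert\WH_{i-1}(X)\rvert^2\,k^i(X)$, which is the claim.

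I expect the only genuine obstacle to be the handling of the vanishing terms: one must verify that a cycletree with zero winding number complements to a dual spanning tree of zero weight, and conversely, so that these terms match consistently on the two sides. This is exactly what \eqref{iff} supplies, so once the pointwise identity is seen to survive the degenerate case, the rest of the argument is a routine reindexing through the bijection and an appeal to the dual Cauchy-Binet formula.
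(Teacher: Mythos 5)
Your proposal is correct and follows exactly the route the paper intends for this corollary: squaring the norm relation of \Cref{relation1}, reindexing through the complementation bijection $U\mapsto\overline{U}$, and identifying $\sum_{T^*\in\mathcal{T}^i}\wt(T^*)^2$ with $k^i(X)$ via the Cauchy--Binet definition of the dual tree number. Your explicit treatment of the degenerate terms via \eqref{iff} is a careful spelling-out of what the paper leaves implicit, not a deviation.
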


Of course, the left hand side can be easily computed in polynomial time. With \Cref{inner_cor}, we can show 
\begin{cor}\label{cor_harmonic_cycle}
$\lambda\circ\lambda = k^i(X)k_i(X)\lvert \WH_{i-1}(X)\rvert^2.$
\end{cor}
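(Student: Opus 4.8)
The plan is to combine the two earlier results that already express the same quantity $\lambda\circ\lambda$ in two different ways. From \Cref{inner_cor}, setting $z=\lambda$ in the inner product formula of \Cref{inner}, we obtain $\lambda\circ\lambda = k_i(X)\sum_{U\in\mathcal{U}_i} w(C_U)^2$. This reduces the corollary to showing $\sum_{U\in\mathcal{U}_i} w(C_U)^2 = k^i(X)\lvert\WH_{i-1}(X)\rvert^2$, which is exactly the statement of the immediately preceding corollary. So the entire content is a two-line substitution, and the only real work lies in the enumeration corollary that precedes it.

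For that enumeration corollary, the plan is to use the bijection $U\mapsto\overline{U}=T^*$ from \Cref{relation1} together with the weight relation it supplies, namely $\lvert w(C_U)\rvert = \wt(T^*)\lvert\WH_{i-1}(X)\rvert$. First I would square this identity to get $w(C_U)^2 = \wt(T^*)^2\lvert\WH_{i-1}(X)\rvert^2$ for each $U$ with nonzero weight. Then I would sum over all cycletrees $U\in\mathcal{U}_i$; since the complement map is a weight-preserving bijection onto $\mathcal{T}^i$ (in the sense of \Cref{iff}, the zero-winding and zero-weight terms match up and contribute nothing), the sum becomes
\begin{equation*}
\sum_{U\in\mathcal{U}_i} w(C_U)^2 = \lvert\WH_{i-1}(X)\rvert^2\sum_{T^*\in\mathcal{T}^i}\wt(T^*)^2.
\end{equation*}
Finally I would recognize $\sum_{T^*\in\mathcal{T}^i}\wt(T^*)^2$ as the dual tree number $k^i(X)$, exactly as the dual analogue of \Cref{treenumber} (the Cauchy–Binet expansion of $\det([\delta_i]_Z[\delta_i]_Z^t)$ as a sum of squared maximal minors, each equal to a squared dual-tree weight). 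Substituting gives the enumeration corollary, and then the one-step reduction above yields $\lambda\circ\lambda = k^i(X)k_i(X)\lvert\WH_{i-1}(X)\rvert^2$.

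The main obstacle, such as it is, is bookkeeping about the vanishing terms: I need the bijection of \Cref{relation1} to genuinely match the summands so that no cycletree with $w(C_U)=0$ is paired with a dual tree of nonzero weight, and vice versa. This is precisely what \Cref{iff} guarantees, since $w(C_U)=0\Leftrightarrow\wt(T^*)=0$, so both sides drop the same indices and the squared identity extends consistently to the degenerate terms (both contributing $0$). Beyond confirming this alignment, every step is a direct citation of an earlier result, so I expect the proof to be short and the chief risk to be merely ensuring the factors of $\lvert\WH_{i-1}(X)\rvert$ are tracked correctly through the squaring.
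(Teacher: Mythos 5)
Your proposal is correct and takes essentially the same route as the paper: the paper obtains \Cref{cor_harmonic_cycle} precisely by combining \Cref{inner_cor} (which gives $\lambda\circ\lambda = k_i(X)\sum_{U\in\mathcal{U}_i} w(C_U)^2$) with the preceding enumeration corollary $k^i(X)\lvert\WH_{i-1}(X)\rvert^2 = \sum_{U\in\mathcal{U}_i} w(C_U)^2$, itself a consequence of the bijection and weight identity of \Cref{relation1}, exactly as you describe. Your explicit bookkeeping of the zero-weight and zero-winding terms via \eqref{iff} spells out a detail the paper leaves implicit, but it is the same argument.
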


\begin{ex}[Dual spanning tree and cycletree]\label{ex_dualspanningtree}
In Figure 3-A, we have a cell complex $X$. Figure 3-B shows all of the 1-dimensional dual spanning trees in $X$. The first dual spanning tree has weight 0, and the others have weight 1. Figure 3-C shows all of the 1-dimensional cycletrees corresponding to the  dual spanning trees. The cycle parts of cycletrees are marked red. All of these cycletrees has weight 1. However, the winding number of their cycle parts is 1 except for the first cycle part whose winding number is 0.

\begin{figure}[h]
\centering
\includegraphics[angle=0, scale=0.5]{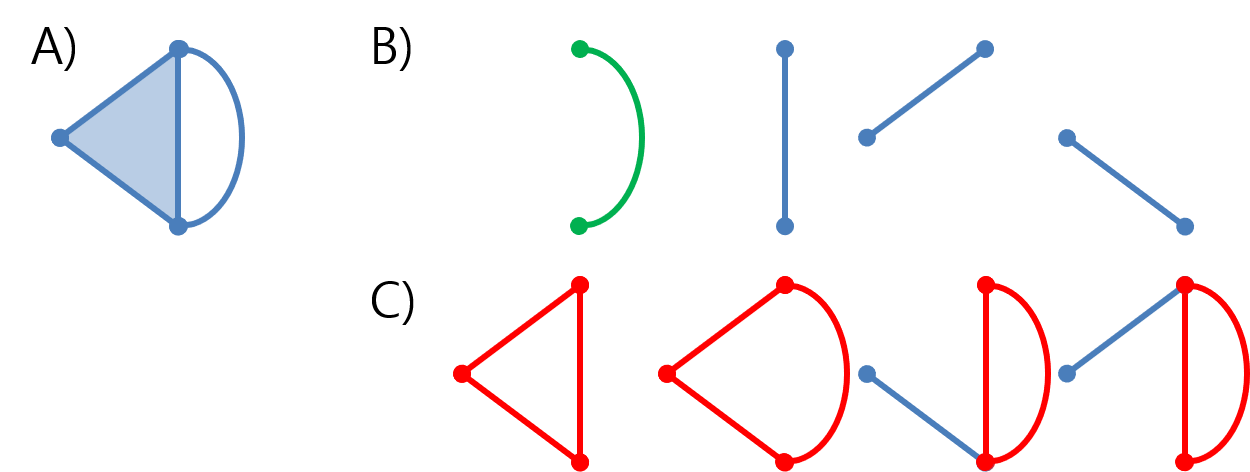}
\caption{A cell complex $X$, dual spanning trees, and cycletrees} \label{Figure3}
\end{figure}

\end{ex}

\section{Dual cycletree and cutting number}\label{section_dual_cycletree}
In this section, we will deal with the dual cycletrees, having one more $i$-cell than the dual spanning trees, and see related properties with respect to the cutting number of a cocycle which is a conceptional dual to the winding number of a cycle. Most of the proofs in this section are similar to those in the previous sections, and will be omitted to avoid repetition.
\subsection{Dual cycletree and its minimal cocycle}

\begin{mydef}
Let $X$ be a cell complex with $\rk\WH^{i+1}(X)=0$. An $i$-dimensional dual cycletree $U^*$ is an element in $\mathcal{S}_i$ such that $\lvert I(U^*)\rvert=\rk Z^{i+1}(X) +1$ with the weight $\wt(U^*)=\lvert H^{i+1}(X,\overline{U^*}) \rvert$ if it is finite, and $\wt(U^*)=0$ otherwise.
\end{mydef}


Let $\mathcal{U}^i=\mathcal{U}^i(X)$ be the set of all $i$-dimensional dual cycletrees in $X$.
When $U^*$ is a dual cycletree with a nonzero weight, we have a unique cocycle $C_{U^*}$ up to nonzero scalar multiplication. Let $C_{U^*}$ be the cocycle constructed by a parallel argument following \Cref{cyclepart}, $\gcd(C_{U^*})$ the gcd of all components of the vector $C_{U^*}$, and $\widehat{C_{U^*}}$ the integer vector $\dfrac{1}{\gcd(C_{U^*})}C_{U^*}$. Note that, if $U^*$ has zero weight, then $C_{U^*}$ and $\widehat{C_{U^*}}$ are the zero vectors.

\begin{prop}\label{prop_gcd_d}
Let $U^*$ be an $i$-dimensional dual cycletree of a cell complex $X$ with the weight $\wt(U^*)$. Then we have $\gcd(C_{U^*})=\lvert H^{i+1}(X,\overline{U^*}) \rvert$. If we assume $\wt(U^*)\neq 0$, then $$\gcd(C_{U^*})=\wt(U^*).$$ Moreover, if $e$ be a virtual $(i-1)$-cell with the coboundary $\delta_{i-1}(e)=m\cdot \widehat{C_{U^*}}$ for a positive integer $m$, and $\rk \WH_i(X)=1$, then we have
$$\lvert H^i(X\oplus e, \overline{U^*})\rvert= m.$$
\end{prop}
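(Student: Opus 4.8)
The plan is to read this as the cohomological dual of \Cref{prop_gcd_wt} and \Cref{ctgcd}, transporting their proofs across the duality $\partial_i \leftrightarrow \delta_i$, $Z_{i-1}(X)\leftrightarrow Z^{i+1}(X)$, and $\WH_{i-1}(U)\leftrightarrow H^{i+1}(X,\overline{U^*})$. Concretely, I would fix a basis $Z$ of $Z^{i+1}(X)$, set $n=\rk Z^{i+1}(X)$, and let $A=[\delta_i|_{I(U^*)}]_Z$ be the $n\times(n+1)$ matrix whose columns are indexed by $I(U^*)$. By the construction of the cocycle part parallel to \Cref{cyclepart}, $C_{U^*}$ is exactly the zero-extension of $v_A$, so each of the three claims reduces to a statement about this single matrix $A$.

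First I would identify $H^{i+1}(X,\overline{U^*})$ with $Z^{i+1}(X)/\im(\delta_i|_{I(U^*)})$. Since $\overline{U^*}\in\mathcal{S}_i$ contains the full $(i-1)$-skeleton and no cell of dimension $\ge i+1$, the relative cochain complex satisfies $C^{i-1}(X,\overline{U^*})=0$ and $\delta_{(X,\overline{U^*}),i+1}=\delta_{X,i+1}$, so that $\ker\delta_{(X,\overline{U^*}),i+1}=Z^{i+1}(X)$, while $\delta_{(X,\overline{U^*}),i}=\delta_{X,i}|_{I(U^*)}$ by the same reasoning as in \Cref{prop1}. Writing this in the basis $Z$ identifies the quotient with $\Z^{n}/\im A$. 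The algebraic fact extracted in the proof of \Cref{prop_gcd_wt}, namely $\gcd(v_A)=\lvert\Z^{n}/\im A\rvert$ (proved by invariance under integral column operations and reduction to Smith normal form, with both sides infinite precisely when $\rk A<n$), then yields $\gcd(C_{U^*})=\lvert H^{i+1}(X,\overline{U^*})\rvert$. The equality $\gcd(C_{U^*})=\wt(U^*)$ for $\wt(U^*)\neq 0$ is then immediate from the definition $\wt(U^*)=\lvert H^{i+1}(X,\overline{U^*})\rvert$.

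For the final assertion I would dualize the argument of \Cref{ctgcd}. Adjoining the virtual $(i-1)$-cell $e$ with $\delta_{i-1}(e)=m\widehat{C_{U^*}}$ enlarges $C^{i-1}(X\oplus e,\overline{U^*})$ by a single generator $e$ but leaves $C^{i}$ and $C^{i+1}$, and hence the map $\delta_{i}$, untouched; therefore the relative $i$-cocycles are unchanged, $Z^{i}(X\oplus e,\overline{U^*})=\ker A$, whereas the relative $i$-coboundaries become $\im\delta_{(X\oplus e,\overline{U^*}),i-1}=m\widehat{C_{U^*}}\Z$, using that $m\widehat{C_{U^*}}$ is supported on $I(U^*)$ and hence lies in $C^{i}(X,\overline{U^*})$. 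By \Cref{cyclepart} the rational kernel of $A$ is one-dimensional and spanned by $\widehat{C_{U^*}}$, and since $\ker A$ is the integral kernel of an integer matrix it is saturated, so its integral points form exactly $\widehat{C_{U^*}}\Z\cong\Z$. Hence $H^{i}(X\oplus e,\overline{U^*})\cong \widehat{C_{U^*}}\Z / m\widehat{C_{U^*}}\Z\cong\Z/m\Z$, of order $m$.

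The routine parts are the Smith-normal-form computation, which is already carried out once in \Cref{prop_gcd_wt}, and the bookkeeping of which relative cochain groups vanish. The step requiring the most care is the last one: I must verify that adjoining an $(i-1)$-cell really only enlarges the coboundaries and leaves $Z^{i}(X\oplus e,\overline{U^*})=\ker A$ intact, and that $\ker A$ is saturated so its integral points form $\widehat{C_{U^*}}\Z$ rather than a larger lattice. Here the condition $\wt(U^*)\neq 0$ (forcing $\rk A=n$, hence $\ker A\cong\Z$) does the real work, while $\rk\WH_i(X)=1$ places us in the unicycle setting of \Cref{condition}, ensuring that $\widehat{C_{U^*}}$ represents the single nontrivial relative class being annihilated, so that the final quotient is a clean cyclic group of order $m$.
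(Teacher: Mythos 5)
Your proof is correct and takes essentially the paper's intended approach: the paper omits this proof (stating that the arguments of this section dualize earlier ones), and your transport of \Cref{prop_gcd_wt} and \Cref{ctgcd} across $\partial_i\leftrightarrow\delta_i$ is exactly that dualization, with the right details supplied --- the identification $H^{i+1}(X,\overline{U^*})\cong \Z^n/\im A$ via the relative cochain complex as in \Cref{prop1}, the Smith normal form computation of $\gcd(v_A)$, and the saturation of the integral kernel $\ker A=\widehat{C_{U^*}}\Z$. Your closing observation is also accurate: the hypothesis $\rk\WH_i(X)=1$ is not actually load-bearing in the final computation, while $\wt(U^*)\neq 0$ is what forces $\rk A=n$ and hence $\ker A\cong\Z$.
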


For our purpose, duality is closely related to, for example, a graph cut. Given a graph $G$, a graph cut is essentially a binary partition which consists of sources and sinks on the vertex set of $G$. We can represent the cut for the partition as the set of directed edges in $G$ from sources to sinks. A bond is a minimal cut. The cut space of $G$ is a vector space generated by graph cuts. Note that a cut space is identified with $\im\partial^t_1$ and we have $C_1(X)=\ker\partial_1\oplus \im\partial^t_1$. As a high dimensional analog, we define $\im\partial^t_i$ to be the $i$-th cut space. Therefore, the cut space is the orthogonal complement of the cycle space. Refer to \cite{DKM2} for more detail.

Notice that, in a lower dimensional complex $X$ with $\rk \WH_1(X)=0$ or a planar graph $G=X^1$, we have $\im\partial^t_1=\ker\partial^t_2$. Equivalently, the 1-coboundary space $B^1$ and the 1-cocycle space $Z^1$ are the same. A high dimensional cut space is often generalized as a coboundary space $B^i$. This generalization makes sense when we are dealing with an acyclic complex $X$ and focusing on the connectivity of $X$. However, a cocycle space $Z^i$ also has a strong relation with ``cut" in a different sense. 

Refer to \Cref{minimal} for the definition of minimal cocycle. For the proofs of the following propositions, consult with those of \Cref{prop_mincycle1} and \ref{prop_mincycle2}.
\begin{prop}
The cocycle part of a dual cycletree is a minimal cocycle, and vice versa.
\end{prop}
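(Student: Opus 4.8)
The plan is to dualize, essentially verbatim, the proof of \Cref{prop_mincycle1}, replacing the boundary operator $\partial_i$ by the coboundary $\delta_i=\partial^t_{i+1}$, cycles by cocycles, and spanning trees by dual spanning trees. The one ingredient I would record first is the cocycle analogue of \Cref{minimal2}: since a minimal cocycle is by \Cref{minimal} a minimally supported nonzero vector of $Z^i=\ker\partial^t_{i+1}$, its support corresponds to a minimal linearly dependent set among the columns of $\delta_i$. This is immediate from the definitions, exactly as for \Cref{minimal2}.

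For the forward direction, recall that $C_{U^*}$ is built by the parallel of \Cref{cyclepart} applied to the submatrix $[\delta_i]_Z|_{I(U^*)}$, an $n\!\times\!(n+1)$ matrix of rank $n$ with $n=\rk Z^{i+1}(X)$. By that lemma its kernel is one-dimensional and spanned by $C_{U^*}$; since this kernel is precisely the space of cocycles supported within $I(U^*)$, no nonzero cocycle can have support strictly smaller than $\supp(C_{U^*})$. Hence $C_{U^*}$ is a minimal cocycle.

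For the converse, I would start from a minimal cocycle $C$ and set $S=\supp(C)$. By the dual of \Cref{minimal2}, $S$ is a minimal linearly dependent set of columns of $\delta_i$, so after deleting any $e\in S$ the set $S\setminus\{e\}$ is linearly independent. Using $\rk\WH^{i+1}(X)=0$, which forces $\rk Z^{i+1}(X)=\rk B^{i+1}(X)$, I would extend $S\setminus\{e\}$ to the index set of a dual spanning tree $T^*$, i.e.\ a basis of the coboundary space $\im\delta_i$. Because the column of $e$ already lies in the span of $S\setminus\{e\}$, this extension can be chosen with $e\notin I(T^*)$. Then $U^*:=X^{i-1}\cup(I(T^*)\cup\{e\})$ satisfies $\lvert I(U^*)\rvert=\rk Z^{i+1}(X)+1$, so it is a dual cycletree, and its cocycle part is the unique-up-to-scalar cocycle supported within $I(U^*)$. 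Since $C$ is such a cocycle, we conclude that $C$ is proportional to $C_{U^*}$, that is, $C$ is the cocycle part of $U^*$.

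The main obstacle is entirely in the converse: guaranteeing that the deleted cell $e$ can be excluded from the dual spanning tree $T^*$, and then arguing that the cocycle space supported on $I(U^*)$ is genuinely one-dimensional so that the cocycle part must coincide with $C$ rather than merely contain it in its support. Both points rest on the nonzero-weight and connectivity hypotheses, which keep the relevant column sets at the right rank; once these are pinned down, the remaining verifications are the same routine linear-algebra facts already used in \Cref{prop_mincycle1}.
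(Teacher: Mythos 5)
Your proposal is correct and takes essentially the same approach as the paper: the paper gives no separate proof here, instead directing the reader to dualize the proof of \Cref{prop_mincycle1}, which is precisely what you carry out, with the needed rank bookkeeping ($\rk Z^{i+1}(X)=\rk B^{i+1}(X)$ under $\rk\WH^{i+1}(X)=0$) made explicit. The one point you flag as the main obstacle---choosing the extension of $S\setminus\{e\}$ so that $e\notin I(T^*)$---is in fact automatic, since any nonzero-weight dual spanning tree containing $S\setminus\{e\}$ has linearly independent columns and hence cannot contain $e$, exactly as in the paper's argument for cycletrees.
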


Here, by \Cref{decompostion}, we have $\mathcal{H}_{i}(X)\oplus\im\partial_{i}^{t}=\ker\partial_{i}^{t}$ and know that the set of graph cuts is a subset of the set of cocycles. Moreover, there is a relation between bonds and minimal cocycles.

\begin{prop}
A bond (or, minimal coboundary) can be written as the sum of at most two minimal cocycles.
\end{prop}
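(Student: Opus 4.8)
The plan is to dualize the proof of \Cref{prop_mincycle2} almost verbatim, trading cycles, boundaries, and $\partial_i$ for cocycles, coboundaries, and the coboundary map $\partial^t_{i+1}$. First I would record the cocycle form of the Hodge decomposition: since $B_i=\im\partial_{i+1}$ is orthogonal to $\ker\partial^t_{i+1}$, the decomposition in \Cref{decompostion} restricts to $Z^i(X)=\ker\partial^t_{i+1}=\mathscr{H}_i(X)\oplus B^i$, where $B^i=\im\partial^t_i$ is the cut space that contains every bond. Under the unicycle \Cref{condition} we have $\rk\WH_i(X)=1$, so $\mathscr{H}_i(X)=\lambda\Q$ for the standard harmonic cycle $\lambda$ of \Cref{hc}, which is simultaneously a cocycle. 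Consequently $B^i$ is exactly the orthogonal complement of $\lambda$ inside the cocycle space, i.e. $B^i=\{v\in Z^i(X): \lambda\circ v=0\}$.

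Next I would realize the cut space as a kernel, mirroring the matrix $\overline{\partial}_i$ used in \Cref{prop_mincycle2}. Set $\overline{\delta}=\begin{pmatrix}\partial^t_{i+1}\\ \lambda\end{pmatrix}$, obtained by adjoining $\lambda$ as an extra row; then $B^i=\ker\overline{\delta}$. Because $\lambda\in\mathscr{H}_i(X)$ is nonzero and orthogonal to the row space $\im\partial_{i+1}$ of $\partial^t_{i+1}$, this new row is independent of the old ones, so $\overline{\delta}$ imposes exactly one more linear condition than $\partial^t_{i+1}$. Thus a bond is precisely a minimal vector of $\ker\overline{\delta}$, while a minimal cocycle is a minimal vector of the larger kernel $\ker\partial^t_{i+1}=Z^i(X)$, and the two kernels differ in codimension one.

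The heart of the argument --- and the step that \Cref{prop_mincycle2} only sketches --- is that a minimal vector $w$ of a codimension-one subspace $B^i\subseteq Z^i(X)$ is a sum of at most two minimal vectors of $Z^i(X)$; this is where I expect the real work to lie. I would make it precise by localizing to the support $S=\supp(w)$ and setting $V_S=\{x\in\ker\partial^t_{i+1}:\supp(x)\subseteq S\}$. Minimality of $w$ in $B^i$ forces $B^i\cap V_S=w\Q$ to be one-dimensional, and since $B^i\cap V_S$ is the kernel of the single functional $\lambda\circ(\cdot)$ restricted to $V_S$, it has codimension at most one there, giving $\dim V_S\le 2$. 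If $\dim V_S=1$ then $V_S=w\Q$ and $w$ is itself a minimal cocycle. If $\dim V_S=2$, a short check shows every nonzero vector of $V_S$ whose support lies strictly inside $S$ is automatically a minimal cocycle; picking such a $u_1$ and eliminating one coordinate of $\supp(u_1)$ from $w$ produces $u_2=w-cu_1\in V_S$ with $\supp(u_2)\subsetneq S$, again minimal, so that $w=cu_1+u_2$ exhibits the bond as a sum of two minimal cocycles. The main obstacle is thus purely the bound $\dim V_S\le 2$ together with the minimality of the two pieces; everything else transfers formally from the cycle case.
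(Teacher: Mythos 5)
Your proposal is correct and follows exactly the route the paper intends: the paper omits the proof of this proposition, directing the reader to dualize \Cref{prop_mincycle2}, whose own proof realizes the (co)boundary space as the kernel of the matrix obtained by adjoining $\lambda$ as an extra row to the (co)boundary operator and then simply asserts that the ambient kernel is larger by at most rank one, whence the conclusion. Your support-localization argument (minimality forcing $B^i\cap V_S=w\Q$, the bound $\dim V_S\le 2$ via the single functional $\lambda\circ(\cdot)$, and the verification that both summands $u_1$ and $u_2=w-cu_1$ are themselves minimal cocycles) is a rigorous expansion of precisely the step the paper leaves implicit, so your write-up is, if anything, more complete than the original.
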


\begin{ex}[Minimal coboundary(or, bond) and minimal cocycle]
In Figure 4-A, we have a cell complex $X$. Figure 4-B shows two cluster (or, set of vertices), $c_0$ marked red and $c_1$ marked green. Note that the induced graphs by $c_0$ and $c_1$ are connected. The minimal coboundary marked blue, which is the coboundary of $c_1$, is a minimal cocycle. Figure 4-C shows the red cluster $c_0$ and green cluster $c_1$. The coboundary of $c_1$ is the minimal coboundary, and is the sum of two minimal cocyles $v_0$ (upper cocycle) and $v_1$ (lower cocycle.)

\begin{figure}[h]
\centering
\includegraphics[angle=0, scale=0.4]{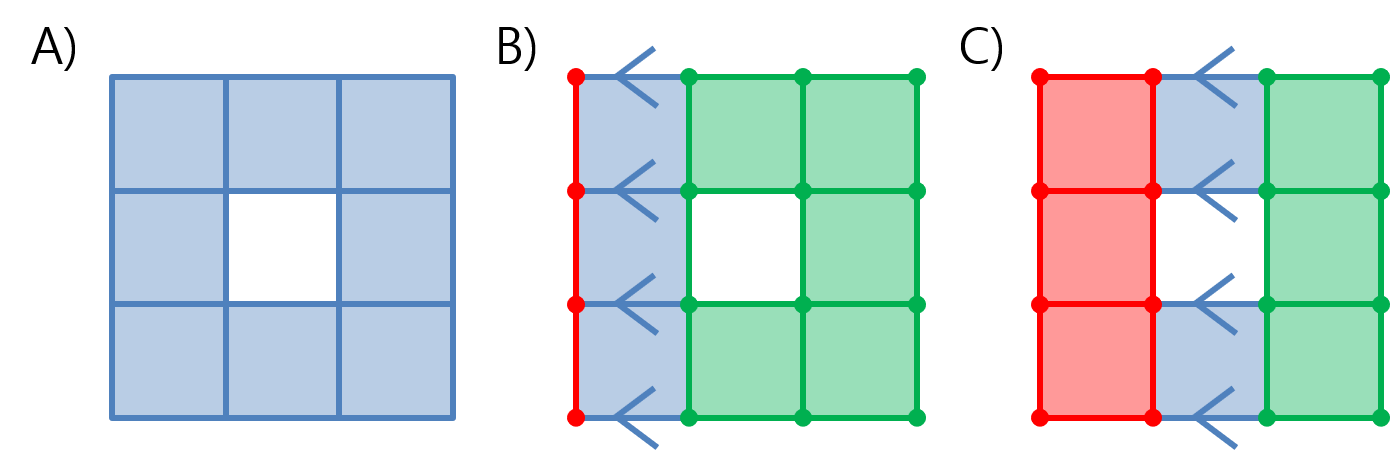}
\caption{A cell complex $X$, minimal cocycles with minimal coboundaries} \label{Figure4}
\end{figure}

\end{ex}

\subsection{Cutting number}
The winding number measures how many times a cycle winds around a homology generator. Similarly, a cutting number counts the multiplicity of cocycles of $\WH^i(X)$ where $X$ is a cell complex with $\rk\WH_i(X)=1$.
\begin{mydef}
A cutting number $c:Z^i\rightarrow \mathbb{Z}$ is given by 
$$c(z)=\det([z]_Z,[\overline{\delta}_{i-1}]_Z)$$ 
where $Z$ is a basis of $Z^i(X)$, and $z$ and the columns of a reduced matrix $\overline{\delta}_{i-1}$ are written with respect to the basis $Z$ to compute the determinant.
\end{mydef}

\begin{ex}[Intuitive meaning of the cutting number]\label{ex_cutting}
As in \Cref{ex_winding}, we have a cell complex $X$ and the standard harmonic cycle $\lambda$ on $X$ in Figure 5-A. In the left hand sides of Figures 5-B and 5-C, the red lines \emph{cut} the complex $X$, which annihilates the 1-dimensional homology of $X$ for B. The right hand sides of Figures 5-B and 5-C show the corresponding red cocyles $v_B$ and $v_C$. Note that $c(v_B)=\pm 1$, and $c(v_C)=0$. We see that the cocycle $v_C$ indeed fails to ``cut" the complex $X$ unlike $v_B$.

\begin{figure}[h]
\centering
\includegraphics[angle=0, scale=0.5]{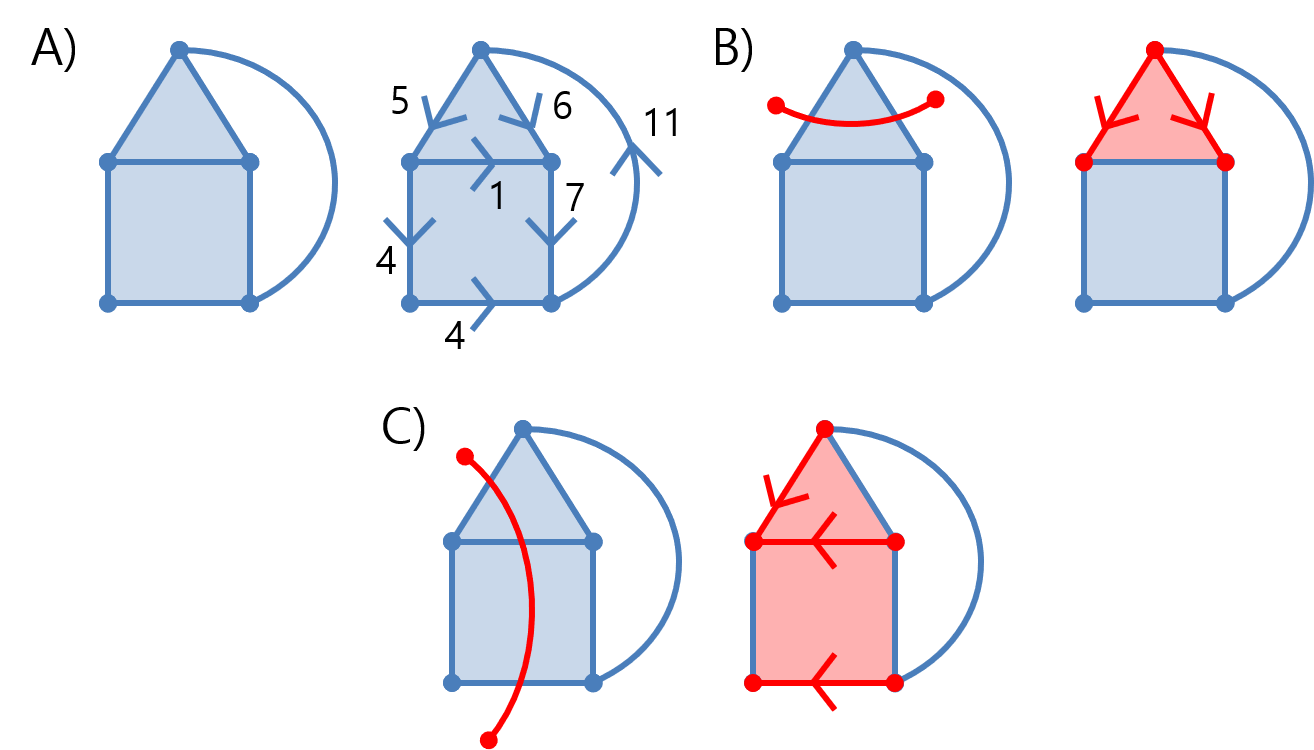}
\caption{A cell complex $X$, a harmonic cycle, and cocycles with corresponding cuts} \label{Figure5}
\end{figure}

\end{ex}

We can represent the cutting number $c(C_{U^*})$ via cohomology as follows.
\begin{prop} \label{prop_cutting}
For a cocycle $z\in Z^{i}$, let $e$ be a virtual $(i-1)$-cell with its coboundary $\delta_{i-1}(e) = z$. If $c(z)\neq 0$, then $$\lvert c(z)\rvert  = \lvert\WH^i(X\oplus e)\rvert.$$
Otherwise, $c(z)\neq 0$ if and only if $\lvert\WH^i(X\oplus e)\rvert=\infty$.
\end{prop}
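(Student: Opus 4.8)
The plan is to dualize the proof of \Cref{winding0} verbatim, since the cutting number is built to be the cohomological mirror of the winding number. \textbf{First} I would unwind the determinant $c(z)=\det([z]_Z,[\overline{\delta}_{i-1}]_Z)$ as the index of a lattice. Just as $\det([z]_Z,[\overline{\partial}_{i+1}]_Z)$ in \Cref{winding0} measures $\lvert\ker\partial_i/(\im\overline{\partial}_{i+1}+z\Z)\rvert$, the matrix here has columns forming a basis of $Z^i(X)$; the block $[\overline{\delta}_{i-1}]_Z$ spans $\im\delta_{i-1}=B^i(X)$ written in that basis, and appending the column $[z]_Z$ adds the extra generator $z\Z$. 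Hence $\lvert c(z)\rvert=\lvert Z^i(X)/(B^i(X)+z\Z)\rvert=\lvert\WH^i(X)/(z\Z)\rvert$ as an index computation, exactly parallel to the first three lines of the \Cref{winding0} proof with $\partial$ replaced by $\delta$ and homology by cohomology.

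\textbf{Second}, I would identify this quotient with a genuine cohomology group of an augmented complex. Attaching the virtual $(i-1)$-cell $e$ with $\delta_{i-1}(e)=z$ enlarges the coboundary image exactly by $z\Z$: in $X\oplus e$ we have $B^i(X\oplus e)=\im\delta_{(X\oplus e),i-1}=B^i(X)+z\Z$, while the cocycle group $Z^i=\ker\delta_{i+1}^t$ is unchanged (the new cell $e$ sits in dimension $i-1$ and does not affect $\delta_{i+1}$). Therefore
\begin{equation*}
\lvert c(z)\rvert=\lvert Z^i(X)/(B^i(X)+z\Z)\rvert=\lvert Z^i(X\oplus e)/B^i(X\oplus e)\rvert=\lvert\WH^i(X\oplus e)\rvert,
\end{equation*}
which is the desired equality. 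This mirrors the cochain-level bookkeeping already used in \Cref{prop_gcd_d}, where annihilating a cocycle via a virtual $(i-1)$-cell was handled the same way, so the needed algebraic identities are in effect already established there.

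\textbf{Finally}, for the degenerate case I would trace when the index is infinite. The finite-index interpretation breaks down precisely when $[z]_Z$ lies in the column span of $[\overline{\delta}_{i-1}]_Z$, i.e. when $z\in B^i(X)$; equivalently $\det([z]_Z,[\overline{\delta}_{i-1}]_Z)=0$, so $c(z)=0$. In that case $B^i(X)+z\Z=B^i(X)$, the quotient $\WH^i(X\oplus e)=\WH^i(X)$ retains its full rank and $\lvert\WH^i(X\oplus e)\rvert=\infty$; conversely a nonzero determinant forces a finite index. This is the contrapositive-style argument closing out \Cref{winding0}, dualized.

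The main obstacle is purely notational rather than conceptual: I must verify with care that adjoining $e$ in dimension $i-1$ leaves $Z^i$ untouched and augments only $B^i$ by $z\Z$, so that the quotient $\WH^i(X\oplus e)$ is genuinely the lattice quotient computed by the determinant. As the section's preamble notes that proofs here run parallel to the winding-number case, the real content is confirming the dualization is faithful at the (co)chain level; once the coboundary bookkeeping is pinned down, the chain of equalities transcribes directly from \Cref{winding0}.
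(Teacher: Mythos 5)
Your proof is correct and is exactly the argument the paper intends: the paper omits the proof of this proposition as the verbatim dual of \Cref{winding0}, and your lattice-index chain $\lvert c(z)\rvert=\lvert Z^i(X)/(B^i(X)+z\Z)\rvert=\lvert Z^i(X\oplus e)/B^i(X\oplus e)\rvert=\lvert\WH^i(X\oplus e)\rvert$, together with the observation that adjoining the virtual $(i-1)$-cell enlarges only $B^i$ by $z\Z$ while leaving $Z^i$ untouched, is precisely that dualization. Two cosmetic points only: $Z^i=\ker\delta_i=\ker\partial_{i+1}^t$, not $\ker\delta_{i+1}^t$ as you wrote, and in the degenerate case $c(z)=0$ means $z$ lies in the $\Q$-span of $B^i(X)$ (so $B^i(X)+z\Z$ has the same rank as $B^i(X)$ and the quotient is infinite) rather than literally $B^i(X)+z\Z=B^i(X)$ over $\Z$ --- an imprecision the paper's own proof of \Cref{winding0} shares.
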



Like the relation \Cref{relation1} between dual spanning trees and cycletrees, there is a relation between dual cycletrees and spanning trees.


\begin{prop} \label{prop_cutting2}
Let $U^*$ be a dual cycletree with $c(C_{U^*})\neq 0$. Then, $\wt(U^*) \lvert c(\widehat{C_{U^*}})\rvert=\lvert\WH^i(\overline{U^*})\rvert\lvert\WH^{i+1}(X)\rvert$.
\end{prop}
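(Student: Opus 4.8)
The plan is to dualize the proof of \Cref{winding} line for line, trading homology for cohomology, the boundary $\partial_{i+1}$ for the coboundary $\delta_{i-1}$, and the cycletree $U$ for the complement $\overline{U^*}$. Since $c(C_{U^*})\neq 0$ forces $\wt(U^*)\neq 0$ (otherwise $C_{U^*}=0$, whence $c(C_{U^*})=0$), the normalized cocycle $\widehat{C_{U^*}}$ is well defined and nonzero, and $c(\widehat{C_{U^*}})\neq 0$ as well because $c$ is linear in its cocycle argument, so $c(C_{U^*})=\gcd(C_{U^*})\,c(\widehat{C_{U^*}})$. First I would let $e$ be a virtual $(i-1)$-cell with coboundary $\delta_{i-1}(e)=\widehat{C_{U^*}}$, i.e.\ take $m=1$ in \Cref{prop_gcd_d}; invoking that proposition (which needs only $\rk\WH_i(X)=1$, part of our standing hypotheses) gives $\lvert H^i(X\oplus e,\overline{U^*})\rvert=1$, hence $H^i(X\oplus e,\overline{U^*})=0$.

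Next I would write the long exact cohomology sequence of the pair $(X\oplus e,\overline{U^*})$ and locate its two vanishing terms. The term $H^i(X\oplus e,\overline{U^*})$ vanishes by the previous step, while $\WH^{i+1}(\overline{U^*})$ vanishes because $\overline{U^*}\in\mathcal{S}_i$ has no $(i+1)$-cells, so $C^{i+1}(\overline{U^*})=0$. Splicing the sequence between these two zeros extracts the four-term exact sequence
\[
0 \to \WH^i(X\oplus e) \to \WH^i(\overline{U^*}) \to H^{i+1}(X\oplus e,\overline{U^*}) \to \WH^{i+1}(X\oplus e) \to 0.
\]

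I would then simplify the terms. Because $e$ is $(i-1)$-dimensional, adjoining it alters only $\delta_{i-2}$ and $\delta_{i-1}$ and thus leaves any group computed from $\delta_i,\delta_{i+1}$ untouched; consequently $H^{i+1}(X\oplus e,\overline{U^*})=H^{i+1}(X,\overline{U^*})$, whose order is $\wt(U^*)$ by the definition of dual-cycletree weight together with \Cref{prop_gcd_d}, and $\WH^{i+1}(X\oplus e)=\WH^{i+1}(X)$. Since $c(\widehat{C_{U^*}})\neq 0$, \Cref{prop_cutting} gives $\lvert\WH^i(X\oplus e)\rvert=\lvert c(\widehat{C_{U^*}})\rvert$. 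Finally, multiplicativity of orders along the four-term exact sequence --- valid because the two flanking torsion terms force the remaining two groups to be finite by a rank count --- yields $\lvert c(\widehat{C_{U^*}})\rvert\,\wt(U^*)=\lvert\WH^i(\overline{U^*})\rvert\,\lvert\WH^{i+1}(X)\rvert$, which is the assertion.

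The steps requiring the most care are the two vanishing claims and the dimension-shift invariance. The genuinely delicate point is confirming that adjoining the virtual $(i-1)$-cell $e$ to $X$ does not perturb the $(i+1)$-dimensional relative and absolute cohomology, so that $e$ may be erased from those terms; relatedly, one must pin down the exact placement of $e$ in the pair $(X\oplus e,\overline{U^*})$, since $\overline{U^*}$ contains every original $(i-1)$-cell but not the new cell $e$. Once that bookkeeping is settled, the remaining identifications are direct applications of \Cref{prop_gcd_d} and \Cref{prop_cutting}, exactly mirroring the homological argument of \Cref{winding}.
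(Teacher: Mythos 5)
Your proposal is correct and matches the paper's own proof essentially line for line: the same virtual $(i-1)$-cell with coboundary $\widehat{C_{U^*}}$, the same use of \Cref{prop_gcd_d} with $m=1$ to kill $H^i(X\oplus e,\overline{U^*})$, the same four-term exact cohomology sequence of the pair $(X\oplus e,\overline{U^*})$ with $\WH^{i+1}(\overline{U^*})=0$, and the same identifications via \Cref{prop_cutting} and the definition of $\wt(U^*)$. You merely make explicit a few points the paper leaves implicit (that $c(C_{U^*})\neq 0$ forces $\wt(U^*)\neq 0$ and $c(\widehat{C_{U^*}})\neq 0$, and the finiteness needed for multiplicativity of orders), all of which check out.
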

\begin{proof}
Since $c(C_{U^*})\neq 0$, we have $\wt(U^*)\neq 0$.
Let $e$ be a virtual $(i-1)$-cell whose coboundary is $\widehat{C_{U^*}}$. Then, by \Cref{prop_gcd_d}, we have $H^i(X\oplus e,\overline{U^*})=0$ because $\lvert H^i(X\oplus e, \overline{U^*})\rvert=1$. Moreover, it is easy to show that $\WH^{i+1}(\overline{U^*})=0$.

Now, there is a long exact sequence of cohomology groups for the pair of chain complexes $(C_i(X\oplus e), C_i(\overline{U^*}))$
$$ 0 \to \WH^i(X\oplus e) \to \WH^i(\overline{U^*}) \to H^{i+1}(X\oplus e ,\overline{U^*}) \to \WH^{i+1}(X\oplus e) \to 0. $$
%
%
%
%
%
Since $e$ is a virtual $(i-1)$-cell, we know that $\WH^{i+1}(X\oplus e)=\WH^{i+1}(X)$, and $H^{i+1}(X\oplus e,\overline{U^*})=H^{i+1}(X,\overline{U^*})$. Hence, the above long exact sequence is
$$ 0 \to \WH^i(X\oplus e) \to \WH^i(\overline{U^*}) \to H^{i+1}(X,\overline{U^*}) \to \WH^{i+1}(X) \to 0. $$
Therefore, $\lvert\WH^i(X\oplus e)\rvert\lvert H^{i+1}(X,\overline{U^*})\rvert=\lvert\WH^i(\overline{U^*})\rvert\lvert\WH^{i+1}(X)\rvert$ holds. By \Cref{prop_cutting}, we have $\lvert\WH^i(X\oplus e)\rvert=\lvert c(\widehat{C_{U^*}})\rvert$, and the proposition holds.

\end{proof}

\begin{thm}\label{relation2}
Let $X$ be a cell complex under \Cref{condition}. There is a bijection between $\mathcal{U}^i$ and $\mathcal{T}_i$ via the complement operator. Moreover, the bijection between $U^*\in \mathcal{U}^i$ and $T\in \mathcal{T}_i$ preserves $$\lvert c(C_{U^*})\rvert=\wt(U^*)\lvert c(\widehat{C_{U^*}})\rvert=\wt(T)\lvert\WH^{i+1}(X)\rvert$$ where $U^*=\overline{T}$.
Also, we have $c(C_{U^*})=0$ if and only if $\wt(T)=0$.
\end{thm}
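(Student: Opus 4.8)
The plan is to run the proof of \Cref{relation1} with every object replaced by its dual: cycletrees by dual cycletrees, dual spanning trees by spanning trees, the winding number by the cutting number, and homology by cohomology; the ``if and only if'' will then be read off from the equality of the three displayed quantities. The bijection is a pure dimension count. The complement operator is a bijection on $\mathcal{S}_i$, so I only need to check that $\overline{U^*}$ has the cardinality of a spanning tree. Setting $n=\rk\partial_i^t$ and $m=\rk\partial_{i+1}$, \Cref{condition} gives $n=\rk Z_{i-1}$, $m=\rk Z^{i+1}$ and $\rk\mathcal{H}_i(X)=1$, so the Hodge decomposition \eqref{decompostion} yields $\rk C_i=1+n+m$. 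Since a dual cycletree satisfies $\lvert I(U^*)\rvert=\rk Z^{i+1}+1=m+1$, its complement satisfies $\lvert I(\overline{U^*})\rvert=(1+n+m)-(m+1)=n=\rk Z_{i-1}$, which is precisely the defining cardinality of a spanning tree; hence $U^*\mapsto\overline{U^*}$ restricts to a bijection $\mathcal{U}^i\to\mathcal{T}_i$.

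The next ingredient is the identification $\lvert\WH^i(\overline{U^*})\rvert=\wt(T)$ for $T=\overline{U^*}$, playing here the role \Cref{prop2} plays in \Cref{relation1}. I would prove it by the universal coefficient theorem, as in \Cref{lem_prop2}: since $T$ has no cells above dimension $i$, one has $\WH^i(T)\cong\Z^{\rk\WH_i(T)}\oplus\Torsion(\WH_{i-1}(T))$, and $\wt(T)\neq 0$ is exactly equivalent to $\WH_i(T)=0$ (equivalently $\WH_{i-1}(T)$ finite), in which case $\lvert\WH^i(T)\rvert=\lvert\WH_{i-1}(T)\rvert=\wt(T)$; when $\wt(T)=0$ the group $\WH^i(T)$ is infinite. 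With this in hand the nonzero case is immediate. Assume $c(C_{U^*})\neq 0$; then $\wt(U^*)\neq 0$, so $C_{U^*}=\wt(U^*)\widehat{C_{U^*}}$ by \Cref{prop_gcd_d}, and linearity of $c$ in its first argument gives $\lvert c(C_{U^*})\rvert=\wt(U^*)\lvert c(\widehat{C_{U^*}})\rvert$. By \Cref{prop_cutting2} this equals $\lvert\WH^i(\overline{U^*})\rvert\lvert\WH^{i+1}(X)\rvert$, which is finite and nonzero, so $\WH^i(\overline{U^*})$ is finite; the identification above then gives $\lvert\WH^i(\overline{U^*})\rvert=\wt(T)$, whence all three displayed quantities agree and are nonzero. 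In particular $c(C_{U^*})\neq 0\Rightarrow\wt(T)\neq 0$.

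It remains to handle the vanishing case and close the equivalence, and the crux is the converse implication $\wt(T)\neq 0\Rightarrow c(C_{U^*})\neq 0$, which I would argue in two steps. First, the cohomology long exact sequence of $(X,\overline{U^*})$, using $\WH^{i+1}(\overline{U^*})=0$ (no $(i+1)$-cells) together with $\rk\WH^i(\overline{U^*})=0$ (from the identification, since $\wt(T)\neq 0$) and $\rk\WH^{i+1}(X)=0$ ($\WH^{i+1}(X)$ is finite under \Cref{condition}), forces $H^{i+1}(X,\overline{U^*})$ to be finite, i.e. $\wt(U^*)\neq 0$. Second, $\widehat{C_{U^*}}$ is now defined; choosing a virtual $(i-1)$-cell $e$ with $\delta_{i-1}(e)=\widehat{C_{U^*}}$, \Cref{prop_gcd_d} gives $H^i(X\oplus e,\overline{U^*})=0$, so the sequence of \Cref{prop_cutting2} begins with an injection $\WH^i(X\oplus e)\hookrightarrow\WH^i(\overline{U^*})$; since $\wt(T)\neq 0$ makes the target finite, $\WH^i(X\oplus e)$ is finite, whence $c(\widehat{C_{U^*}})\neq 0$ by \Cref{prop_cutting} and $c(C_{U^*})\neq 0$. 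Combined with the previous paragraph this yields $c(C_{U^*})=0\iff\wt(T)=0$. Finally, if $c(C_{U^*})=0$ then $\wt(U^*)\lvert c(\widehat{C_{U^*}})\rvert=\lvert c(C_{U^*})\rvert=0$ and $\wt(T)=0$, so all three quantities vanish, completing every case.

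The main obstacle is precisely this vanishing direction. Unlike \Cref{winding}, which carries the built-in equivalence ``$w(C_U)=0$ iff the relative homology is infinite,'' \Cref{prop_cutting2} is stated only for $c(C_{U^*})\neq 0$, so the implication $\wt(T)\neq 0\Rightarrow c(C_{U^*})\neq 0$ has to be supplied by hand. The delicate point is guaranteeing that the relative cohomology groups $H^i(X\oplus e,\overline{U^*})$ and $\WH^{i+1}(\overline{U^*})$ vanish, so that the two long exact sequences truncate correctly and deliver the needed injectivity and rank bounds.
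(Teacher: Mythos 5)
Your proposal is correct and takes exactly the route the paper intends: its own proof of \Cref{relation2} is a one-line deferral to the dualization of \Cref{relation1} combined with \Cref{prop_cutting2}, which is precisely the argument you carry out. The details you supply --- the dimension count via the Hodge decomposition for the bijection, the universal-coefficient identification $\lvert\WH^i(T)\rvert=\wt(T)$ (the dual of \Cref{lem_prop2} and \Cref{prop2}), and the two long-exact-sequence steps handling the vanishing case (dual to the contrapositive step in \Cref{relation1}, compensating for the fact that \Cref{prop_cutting2} is stated only for $c(C_{U^*})\neq 0$) --- are exactly the steps the paper omits, and you fill them in correctly.
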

\begin{proof}
The proof is similar to that of \Cref{relation1} combined with \Cref{prop_cutting2}.
\end{proof}

\begin{cor}\label{cor_weighted_cycletree_number_d1}
The $i$-tree number $k_i(X)$ can be given as $$k_i(X)= \dfrac{1}{\lvert\WH^{i+1}(X)\rvert^2}\sum_{U^*\in \mathcal{U}^i} c(C_{U^*})^2.$$
\end{cor}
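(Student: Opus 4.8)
The plan is to obtain this identity directly from the bijection in \Cref{relation2} together with the defining formula for the tree number. The substantive work has already been carried out in \Cref{relation2}; what remains is a bookkeeping argument of squaring and summing.

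First I would recall that, by definition, $k_i(X)=\sum_{T\in\mathcal{T}_i}\wt(T)^2$, where the sum runs over all $i$-dimensional spanning trees of $X$ (those with zero weight contributing nothing). Under \Cref{condition} the rank of $\WH_{i+1}(X)$ is zero, so $\lvert\WH^{i+1}(X)\rvert$ is a finite positive integer and division by $\lvert\WH^{i+1}(X)\rvert^2$ is legitimate.

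Next I would invoke \Cref{relation2}: the complement operator $T\mapsto\overline{T}=U^*$ is a bijection from $\mathcal{T}_i$ onto $\mathcal{U}^i$, and along this bijection one has $\lvert c(C_{U^*})\rvert=\wt(T)\lvert\WH^{i+1}(X)\rvert$. Squaring removes the sign ambiguity inherent in the cutting number (which is a determinant depending on a basis choice only up to sign), yielding the well-defined equality $c(C_{U^*})^2=\wt(T)^2\,\lvert\WH^{i+1}(X)\rvert^2$ for each matched pair. The zero-weight objects cause no trouble: by the final assertion of \Cref{relation2}, $c(C_{U^*})=0$ if and only if $\wt(T)=0$, so such terms vanish simultaneously on both sides.

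Finally I would sum over the bijection. Since $\lvert\WH^{i+1}(X)\rvert^2$ is a common factor independent of the summation index, this gives
\[
\sum_{U^*\in\mathcal{U}^i}c(C_{U^*})^2=\lvert\WH^{i+1}(X)\rvert^2\sum_{T\in\mathcal{T}_i}\wt(T)^2=\lvert\WH^{i+1}(X)\rvert^2\,k_i(X),
\]
and dividing through by $\lvert\WH^{i+1}(X)\rvert^2$ yields the claimed formula. I do not anticipate any genuine obstacle here: the only point requiring care is confirming that the bijection of \Cref{relation2} pairs zero-contribution terms with zero-contribution terms, which is precisely the ``if and only if'' clause already recorded there, so the identity is essentially a termwise consequence of the preserved relation.
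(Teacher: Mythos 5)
Your proof is correct and follows exactly the route the paper intends: the corollary is stated without proof as an immediate consequence of \Cref{relation2}, and your argument---squaring the preserved relation $\lvert c(C_{U^*})\rvert=\wt(T)\lvert\WH^{i+1}(X)\rvert$ termwise along the complement bijection, using the ``if and only if'' clause to match zero-weight terms, and summing against the definition $k_i(X)=\sum_{T\in\mathcal{T}_i}\wt(T)^2$---is precisely that consequence spelled out. Your care about the sign ambiguity of the cutting number and the finiteness of $\lvert\WH^{i+1}(X)\rvert$ under \Cref{condition} is appropriate and introduces no gap.
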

\begin{ex}[Spanning trees and dual cycletrees]
As in \Cref{ex_dualspanningtree}, we have a cell complex $X$ in Figure 6-A. Figure 6-B  shows 1-dimensional spanning trees in $X$. The last spanning tree has weight 0 while the others have weight 1. Figure 6-C shows 1-dimensional dual cycletrees in $X$ which correspond to the spanning trees. The cocycle parts of cycletrees are colored red. Every cycletree has weight 1. However, the cutting numbers of their cocycle parts are 1 except for the last. The last cocycle part has a cutting number of 0.

\begin{figure}[h]
\centering
\includegraphics[angle=0, scale=0.5]{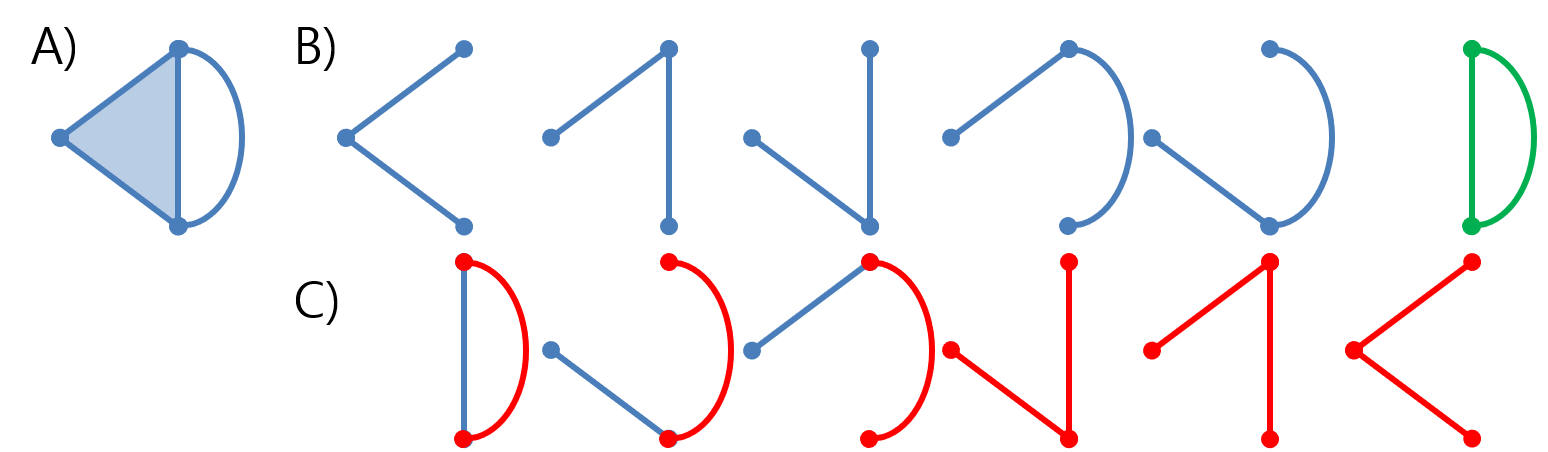}
\caption{A cell complex $X$, spanning trees, and dual cycletrees} \label{Figure6}
\end{figure}

\end{ex}

\section{Standard harmonic cocycle and relationship}\label{section_harmonic_cocycle}
In this section, we will see a decomposition of harmonic cycle with respect to cocycles. And this is the dual statement of \Cref{hc}. Some proofs which are already mentioned in dual statements are omitted.

\begin{mydef}
Let $X$ be a cell complex under the conditions (\Cref{condition}). The standard harmonic cocycle $\lambda^*$ in $Z^i(X)$ is given by
$$\lambda^* = \sum_{U^*} c(C_{U^*}) C_{U^*}.$$
where the summation is over all $U^*\in \mathcal{U}^i(X)$.
\end{mydef}

This expression is made by replacing cycles with cocycles. The standard harmonic cocycle has a relation with the cutting number that is similar to the relation in \Cref{inner}. 
\begin{thm}\label{inner2}
Let $X$ be a cell complex under \Cref{condition}. We have $$\lambda^*\circ x = c(x)\cdot k^i(X).$$
\end{thm}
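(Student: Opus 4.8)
The plan is to dualize the proof of \Cref{inner} essentially verbatim, replacing each primal object by its cohomological counterpart. Concretely, the cycles $z,\lambda,C_U$ are replaced by the cocycles $x,\lambda^*,C_{U^*}$; the boundary matrix $[\partial_i]_Z$ with $Z$ a basis of $Z_{i-1}(X)$ is replaced by the coboundary matrix $[\delta_i]_Z$ with $Z$ a basis of $Z^{i+1}(X)$; the winding number $w$ is replaced by the cutting number $c$; the tree number $k_i$ by the dual tree number $k^i$; and the cycletrees $\mathcal{U}_i$ by the dual cycletrees $\mathcal{U}^i$. Under \Cref{condition} the quantity $m=\rk Z^{i+1}(X)$ plays exactly the role that $n=\rk Z_{i-1}(X)$ played before, so all of the combinatorial bookkeeping carries over unchanged.

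First I would expand $k^i(X)$ using its Cauchy-Binet description (as in \Cref{treenumber}), writing $c(x)\,k^i(X)=\sum_{\lvert I\rvert=m} c(x)\det([\delta_i|_I]_Z)^2$, where $I$ ranges over the index sets of dual spanning trees. Next, for each fixed $I$, I would expand $x$ in the $\Q$-basis of $Z^i(X;\Q)$ given by the cocycle parts $\{C_{U^*}\mid I(U^*)=I\coprod e\}$, obtaining $x=\sum_e \tfrac{x_e}{C_{U^*,e}}C_{U^*}$, hence $c(x)=\sum_e \tfrac{x_e}{C_{U^*,e}}c(C_{U^*})$ by linearity of $c$. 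Substituting the identity $C_{U^*,e}=\pm\det([\delta_i|_I]_Z)$ (the dual of the construction following \Cref{cyclepart}) cancels the squared determinant against the denominator, leaving $\sum_{\lvert I\rvert=m}\sum_{I(U^*)=I\coprod e} x_e\,C_{U^*,e}\,c(C_{U^*})$. Reindexing this double sum over pairs (dual spanning tree $I$, extra cell $e$) as a single sum over dual cycletrees $U^*$ and their cells $e$ turns it into $\sum_{U^*\in\mathcal{U}^i}\sum_{e\in U^*} x_e\,C_{U^*,e}\,c(C_{U^*})=\sum_{U^*}(x\circ C_{U^*})\,c(C_{U^*})$, which is exactly $x\circ\lambda^*$ by the definition of $\lambda^*$.

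The main obstacle is to justify the two dual ingredients rather than the arithmetic. One needs that the cocycle parts $C_{U^*}$ extending a fixed dual spanning tree form a basis of $Z^i(X;\Q)$, and that each coordinate satisfies $C_{U^*,e}=\pm\det([\delta_i|_I]_Z)$. Both follow by applying the orthogonal-complement lemma \Cref{cyclepart} to the matrix $[\delta_i]_Z$ in place of $[\partial_i]_Z$: since $\delta_i=\partial^t_{i+1}$ has kernel $Z^i$, the lemma produces $C_{U^*}$ as the vector of signed maximal minors of $[\delta_i|_{I(U^*)}]_Z$, which simultaneously yields the determinant identity and shows that these vectors span the relevant cocycle space. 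Once these dual analogues of the facts used in \Cref{inner} are in hand, every step of the chain above is formally identical to the primal case, so no new computation is required.
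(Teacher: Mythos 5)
Your proposal is correct and takes exactly the approach the paper intends: the paper omits the proof of \Cref{inner2} precisely because it is the verbatim dualization of the proof of \Cref{inner}, which is what you carry out step by step. Your justification of the two dual ingredients (that the cocycle parts $C_{U^*}$ over a fixed dual spanning tree form a basis of $Z^i(X;\Q)$, and that $C_{U^*,e}=\pm\det([\delta_i|_I]_Z)$, both obtained by applying \Cref{cyclepart} to $[\delta_i]_Z$ whose kernel is $Z^i$) supplies exactly the details the paper leaves implicit.
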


Hence, $\lambda^*$ is seen to be non-trivial as in \Cref{inner_cor}. 
\begin{cor}
$\lambda^*$ is a nonzero element in $\mathscr{H}_i(X)$.
\end{cor}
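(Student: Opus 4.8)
The plan is to mirror the proof of \Cref{inner_cor}, now using the dual inner product formula \Cref{inner2} in place of \Cref{inner}. The corollary contains two assertions---that $\lambda^*$ is harmonic and that it is nonzero---and I would treat them separately, establishing harmonicity first and nonvanishing second.

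For harmonicity, recall from \Cref{harmonics} that $\mathscr{H}_i(X) = \ker\partial_i \cap \ker\partial^t_{i+1}$. By its very definition $\lambda^* = \sum_{U^*} c(C_{U^*}) C_{U^*}$ is a $\Q$-linear combination of cocycles $C_{U^*} \in Z^i = \ker\partial^t_{i+1}$, so $\lambda^* \in \ker\partial^t_{i+1}$ is immediate. The remaining requirement $\lambda^* \in \ker\partial_i$ is, by the orthogonal decomposition of \Cref{decompostion}, equivalent to $\lambda^*$ being orthogonal to the coboundary space $B^i = \im\partial^t_i$. I would obtain this from \Cref{inner2} together with the fact that the cutting number annihilates coboundaries: if $x \in B^i$, then $[x]_Z$ lies in the column span of $[\overline{\delta}_{i-1}]_Z$, so the determinant defining $c(x)$ has linearly dependent columns and vanishes. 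Hence $\lambda^* \circ x = c(x)\,k^i(X) = 0$ for every $x \in B^i$, which yields $\lambda^* \perp B^i$ and therefore $\lambda^* \in \ker\partial_i$, completing the proof that $\lambda^* \in \mathscr{H}_i(X)$.

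For nonvanishing, I would substitute $x = \lambda^*$ into \Cref{inner2}. Since the cutting number is linear in its argument, $c(\lambda^*) = \sum_{U^*} c(C_{U^*})^2$, and hence
$$\lambda^* \circ \lambda^* = c(\lambda^*)\,k^i(X) = k^i(X)\sum_{U^* \in \mathcal{U}^i} c(C_{U^*})^2.$$
It then suffices to show both factors are nonzero. The dual tree number $k^i(X)$ is positive because the hypothesis $\rk\WH_{i+1}(X) = 0$ in \Cref{condition} guarantees at least one dual spanning tree of nonzero weight. For the sum, I would appeal to \Cref{relation2}: the dual cycletrees with nonzero cutting number correspond bijectively to spanning trees of nonzero weight, and the hypothesis $\rk\WH_{i-1}(X) = 0$ ensures that at least one spanning tree of nonzero weight exists. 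Thus at least one term $c(C_{U^*})^2$ is strictly positive, forcing $\lambda^* \circ \lambda^* > 0$ and so $\lambda^* \neq 0$.

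The argument is conceptually routine, being the verbatim dual of the cycle-side reasoning; the only point demanding care is the verification that $c$ vanishes on coboundaries and that \Cref{inner2} legitimately applies to $x \in B^i$, which is permissible precisely because $B^i \subseteq Z^i$, the stated domain of the cutting number. Both facts follow directly from the determinantal definition of $c$ and the reduced coboundary matrix $\overline{\delta}_{i-1}$, exactly paralleling how the winding number annihilates boundaries in the cycle setting underlying \Cref{inner_cor}.
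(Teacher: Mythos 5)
Your proposal is correct and takes essentially the paper's own route: the paper proves this corollary exactly ``as in \Cref{inner_cor},'' i.e.\ by setting $x=\lambda^*$ in \Cref{inner2} to obtain $\lambda^*\circ\lambda^* = k^i(X)\sum_{U^*\in\mathcal{U}^i} c(C_{U^*})^2$ and observing that under \Cref{condition} both $k^i(X)$ and the sum of squares are nonzero (your justification via \Cref{relation2} and the existence of a spanning tree of nonzero weight matches the paper's dual reasoning). Your explicit verification that $\lambda^*$ is harmonic---$\lambda^*\in Z^i$ by construction, and $\lambda^*\in\ker\partial_i$ because $c$ vanishes on coboundaries, so \Cref{inner2} gives $\lambda^*\perp B^i=\im\partial_i^t$---is sound and merely fills in a step the paper leaves implicit in both \Cref{inner_cor} and its dual.
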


\begin{ex}[Inner product formula]
In \Cref{ex_cutting}, there are the standard harmonic cocycle $\lambda$ of a cell complex $X$, and cocycles $v_B$, and $v_C$. From the inner product formula for $v_B$, we have $v_B\circ \lambda =5+6=11 = 1\times 11 = c(v_B)\cdot k^1(X)$ where $k^1(X)=11$. For $v_C$, we have $v_C\circ \lambda =5-1-4=0 = 1\times 0 = c(v_C)\cdot k^1(X)$.
\end{ex}

\begin{remark}
As in the rational winding number, via \ref{inner2}, we can extend the domain of cutting number map to a chain group. Specifically, we define the rational cutting number to be  $c(z) \coloneqq z\circ \lambda / k^i(X)$. 
\end{remark}

From the standard harmonic cocycle, we can calculate the sum of the weighted dual cycletrees.
\begin{cor}\label{cor_weighted_cycletree_number_d2}
$$\dfrac{\lambda^* \circ \lambda^*}{k^i(X)} = \sum_{U^*\in \mathcal{U}^i} c(C_{U^*})^2.$$ 
\end{cor}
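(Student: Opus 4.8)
The plan is to read this off directly from the inner product formula \Cref{inner2}, in exactly the same way the analogous statement $\lambda\circ\lambda/k_i(X)=\sum_U w(C_U)^2$ was obtained from \Cref{inner} via \Cref{inner_cor}. Since $\lambda^*\in Z^i(X)$, I may substitute $x=\lambda^*$ into \Cref{inner2} to obtain
$$\lambda^*\circ\lambda^* = c(\lambda^*)\,k^i(X),$$
so the entire corollary reduces to evaluating the single scalar $c(\lambda^*)$.

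First I would note that the cutting number $c:Z^i(X)\to\Z$ is a linear functional: in its defining determinant $c(z)=\det([z]_Z,[\overline{\delta}_{i-1}]_Z)$ the argument $z$ occupies one column, and the determinant is linear in that column, while $[\,\cdot\,]_Z$ is merely a linear change of coordinates on $Z^i(X)$; hence $c$ is additive and homogeneous on the cocycle space. Feeding in the definition $\lambda^*=\sum_{U^*} c(C_{U^*})\,C_{U^*}$ and pulling the scalars $c(C_{U^*})$ out through linearity then gives
$$c(\lambda^*)=\sum_{U^*\in\mathcal{U}^i} c(C_{U^*})\,c(C_{U^*})=\sum_{U^*\in\mathcal{U}^i} c(C_{U^*})^2.$$
Combining the two displays yields $\lambda^*\circ\lambda^* = k^i(X)\sum_{U^*} c(C_{U^*})^2$, and dividing through by $k^i(X)$ is precisely the claim.

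There is essentially no genuine obstacle here; the only points needing care are the legitimacy of the division and, implicitly, the linearity used above. For the division I would observe that $\lambda^*\neq 0$ by the preceding corollary, so $\lambda^*\circ\lambda^*>0$ by positive-definiteness of the standard inner product $\circ$; the displayed identity $\lambda^*\circ\lambda^*=c(\lambda^*)k^i(X)$ then forces $k^i(X)\neq 0$, making the final division valid. The linearity of $c$ is the one structural fact to confirm, but it follows at once from multilinearity of the determinant in the column indexed by $z$. The argument is the exact cocycle-theoretic mirror of the cycle case, which is why it can be stated as a corollary rather than a theorem.
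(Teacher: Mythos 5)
Your proposal is correct and takes essentially the same route the paper intends: the corollary is stated without proof precisely because it follows from \Cref{inner2} with $x=\lambda^*$ together with linearity of the cutting number, exactly mirroring how \Cref{inner_cor} is derived from \Cref{inner} in the cycle case. Your extra care about $k^i(X)\neq 0$ (via the positive-definiteness of $\circ$ and the non-vanishing of $\lambda^*$ from the preceding corollary) is a legitimate filling-in of a detail the paper leaves implicit, not a deviation.
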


By using \Cref{cor_weighted_cycletree_number_d1} and \Cref{cor_weighted_cycletree_number_d2}, we have the following formula.
\begin{cor}\label{cor_harmonic_cocycle}
$$\lambda^* \circ \lambda^* = k^i(X) k_i(X)\lvert\WH^{i+1}(X)\rvert^2.$$ 
\end{cor}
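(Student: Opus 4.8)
The plan is to eliminate the common quantity $\sum_{U^*\in\mathcal{U}^i} c(C_{U^*})^2$ between the two enumeration corollaries already established for dual cycletrees. Both \Cref{cor_weighted_cycletree_number_d1} and \Cref{cor_weighted_cycletree_number_d2} isolate precisely this sum: the former writes it as $k_i(X)\lvert\WH^{i+1}(X)\rvert^2$, while the latter writes it as $(\lambda^*\circ\lambda^*)/k^i(X)$. Setting the two expressions equal immediately yields the claimed identity, so no new geometric or homological input is needed at this final stage.

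Concretely, I would first invoke \Cref{cor_weighted_cycletree_number_d2} to record
$$\lambda^*\circ\lambda^* = k^i(X)\sum_{U^*\in\mathcal{U}^i} c(C_{U^*})^2,$$
which itself follows by substituting $x=\lambda^*$ into the dual inner product formula \Cref{inner2} and recognizing $c(\lambda^*)=\sum_{U^*} c(C_{U^*})^2$ from the definition of $\lambda^*$. Next I would invoke \Cref{cor_weighted_cycletree_number_d1} in the form
$$\sum_{U^*\in\mathcal{U}^i} c(C_{U^*})^2 = k_i(X)\lvert\WH^{i+1}(X)\rvert^2,$$
which is the dual-cycletree counterpart of \Cref{relation2}. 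Substituting the second identity into the first gives $\lambda^*\circ\lambda^* = k^i(X)k_i(X)\lvert\WH^{i+1}(X)\rvert^2$, exactly as desired.

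The step itself is a one-line algebraic elimination, so there is no real obstacle internal to this corollary. All of the substantive work lives upstream in the two prerequisite corollaries, and ultimately in verifying that the cutting-number theory is a faithful dual of the winding-number theory --- in particular that \Cref{inner2} and \Cref{relation2} hold with the cocycle constructions $C_{U^*}$ playing the role of the cycle parts $C_U$. Since those are assumed, the only point demanding a moment of care is bookkeeping with the dual dimensional indices ($Z^{i+1}$, $\WH^{i+1}$, and the $(i-1)$-dimensional virtual cells), so that the two corollaries are genuinely being applied to the same cell complex $X$ under \Cref{condition} and the common sum $\sum_{U^*} c(C_{U^*})^2$ really does match term by term.
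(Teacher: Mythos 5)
Your proposal is correct and follows exactly the paper's own route: the paper derives this corollary by combining \Cref{cor_weighted_cycletree_number_d1} and \Cref{cor_weighted_cycletree_number_d2} to eliminate the common sum $\sum_{U^*\in\mathcal{U}^i} c(C_{U^*})^2$, just as you do. Your side remarks (deriving \Cref{cor_weighted_cycletree_number_d2} from \Cref{inner2} with $x=\lambda^*$, and the dimensional bookkeeping under \Cref{condition}) are also consistent with the paper.
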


Further, we point out the following elegant relation between the standard harmonic cycle and the standard harmonic cocycle.
\begin{thm}\label{thm_normal_def}
$$\dfrac{\lambda}{\lvert\WH_{i-1}(X)\rvert} =\pm \dfrac{\lambda^*}{\lvert\WH^{i+1}(X)\rvert}.$$
\end{thm}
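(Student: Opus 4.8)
The plan is to show that both $\lambda/\lvert\WH_{i-1}(X)\rvert$ and $\lambda^*/\lvert\WH^{i+1}(X)\rvert$ are the same harmonic cycle up to sign, by computing that their squared norms agree and that both lie in the one-dimensional harmonic space $\mathscr{H}_i(X)$. By \Cref{condition} we have $\rk\WH_i(X)=1$, so by the Hodge isomorphism \Cref{hodge} the harmonic space $\mathscr{H}_i(X)$ is one-dimensional over $\Q$. Since $\lambda$ is a nonzero harmonic cycle (\Cref{inner_cor}) and $\lambda^*$ is a nonzero harmonic cycle (the corollary following \Cref{inner2}), both scaled vectors are nonzero elements of the same one-dimensional space, hence they are scalar multiples of one another. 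It therefore suffices to show that the scalar has absolute value $1$, which I will do by comparing norms.

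The key computation uses the norm formulas already established. From \Cref{cor_harmonic_cycle} we have $\lambda\circ\lambda = k^i(X)\,k_i(X)\,\lvert\WH_{i-1}(X)\rvert^2$, so
\begin{equation*}
\frac{\lambda\circ\lambda}{\lvert\WH_{i-1}(X)\rvert^2} = k^i(X)\,k_i(X).
\end{equation*}
Dually, from \Cref{cor_harmonic_cocycle} we have $\lambda^*\circ\lambda^* = k^i(X)\,k_i(X)\,\lvert\WH^{i+1}(X)\rvert^2$, so
\begin{equation*}
\frac{\lambda^*\circ\lambda^*}{\lvert\WH^{i+1}(X)\rvert^2} = k^i(X)\,k_i(X).
\end{equation*}
Thus the two normalized vectors $\lambda/\lvert\WH_{i-1}(X)\rvert$ and $\lambda^*/\lvert\WH^{i+1}(X)\rvert$ have equal squared norms. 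Two vectors in a one-dimensional inner product space that share the same norm must be equal up to sign, which is exactly the claimed identity.

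The main obstacle is establishing cleanly that $\lambda$ and $\lambda^*$ lie in the \emph{same} one-dimensional space rather than merely in isomorphic spaces; this requires invoking \Cref{harmonics}, namely $\mathscr{H}_i(X)=\ker\partial_i\cap\ker\partial^t_{i+1}$, so that a harmonic cycle is simultaneously a cycle and a cocycle and both objects genuinely live in the single subspace $\mathscr{H}_i(X)\subseteq C_i(X;\Q)$. Once this identification is in place the argument is immediate: two nonzero vectors spanning the same line, with equal norms, coincide up to sign. I would take care to confirm that $\lambda^*$, defined as a sum over dual cycletrees of cocycle parts, does in fact land in $\mathscr{H}_i(X)$ and not merely in $Z^i(X)$; this follows from the dual analogue of \Cref{inner_cor}. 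The sign ambiguity $\pm$ is unavoidable and expected, since each of $w(\cdot)$, $c(\cdot)$, and the cycle/cocycle parts is only defined up to an overall sign determined by a choice of basis orientations, so no canonical sign can be pinned down without fixing those conventions.
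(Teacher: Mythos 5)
Your proposal is correct and follows essentially the same route as the paper: the paper's proof is the one-line instruction to combine \Cref{cor_harmonic_cycle} and \Cref{cor_harmonic_cocycle}, which is precisely your norm comparison. The details you supply --- that $\mathscr{H}_i(X)$ is one-dimensional under \Cref{condition}, that both $\lambda$ and $\lambda^*$ are nonzero elements of this same subspace of $C_i(X;\Q)$, and that equal norms on a line force agreement up to sign --- are exactly what the paper leaves implicit.
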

\begin{proof}
Use \Cref{cor_harmonic_cycle} and \Cref{cor_harmonic_cocycle}.
\end{proof}

Finally, we present the normalized harmonic (co)cycle as follows.

\begin{thm}
Let the normalized harmonic cycle $\overline{\lambda}$ be $\dfrac{\lambda}{\lvert\WH_{i-1}(X)\rvert}$. Then,  $$\overline{\lambda}\circ\overline{\lambda}= k_i(X)k^i(X).$$
\end{thm}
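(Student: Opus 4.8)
The plan is to obtain the desired identity by substituting the definition $\overline{\lambda} = \lambda/\lvert\WH_{i-1}(X)\rvert$ into the inner product $\overline{\lambda}\circ\overline{\lambda}$ and then invoking the norm formula for $\lambda$ already established in \Cref{cor_harmonic_cycle}. Concretely, by bilinearity of the inner product,
\begin{equation*}
\overline{\lambda}\circ\overline{\lambda} = \frac{\lambda\circ\lambda}{\lvert\WH_{i-1}(X)\rvert^2},
\end{equation*}
so the entire statement reduces to computing $\lambda\circ\lambda$ and dividing by $\lvert\WH_{i-1}(X)\rvert^2$.

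The key step is to recall \Cref{cor_harmonic_cycle}, which gives
\begin{equation*}
\lambda\circ\lambda = k^i(X)\,k_i(X)\,\lvert\WH_{i-1}(X)\rvert^2.
\end{equation*}
Substituting this into the expression above, the factor $\lvert\WH_{i-1}(X)\rvert^2$ in the numerator cancels exactly against the $\lvert\WH_{i-1}(X)\rvert^2$ in the denominator, leaving $\overline{\lambda}\circ\overline{\lambda} = k_i(X)\,k^i(X)$, which is the claim. Thus the proof is essentially a one-line algebraic simplification once \Cref{cor_harmonic_cycle} is in hand.

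There is no genuine obstacle here, since all the substantive work has already been done: \Cref{cor_harmonic_cycle} packages the combinatorial Hodge-theoretic content (the norm of $\lambda$ expressed through tree numbers and the torsion factor $\lvert\WH_{i-1}(X)\rvert^2$), and the normalization is designed precisely to absorb that torsion factor. The only point warranting a moment's care is ensuring that $\lvert\WH_{i-1}(X)\rvert$ is nonzero so that the division defining $\overline{\lambda}$ is legitimate; under \Cref{condition} we have $\rk\WH_{i-1}(X)=0$, so $\WH_{i-1}(X)$ is finite and $\lvert\WH_{i-1}(X)\rvert\geq 1$, which validates the normalization and the cancellation. Hence the statement follows immediately, and the proof need only cite \Cref{cor_harmonic_cycle} together with the bilinearity of $\circ$.
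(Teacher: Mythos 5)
Your proof is correct, but it takes a different (and in fact more direct) route than the paper. The paper's own proof reads ``Use \Cref{cor_harmonic_cocycle} and \Cref{thm_normal_def}'': it first invokes the duality relation $\lambda/\lvert\WH_{i-1}(X)\rvert = \pm\,\lambda^*/\lvert\WH^{i+1}(X)\rvert$ to rewrite $\overline{\lambda}$ in terms of the standard harmonic \emph{cocycle}, and then applies the cocycle-side norm formula $\lambda^*\circ\lambda^* = k^i(X)k_i(X)\lvert\WH^{i+1}(X)\rvert^2$, so that the factor $\lvert\WH^{i+1}(X)\rvert^2$ cancels. You instead stay entirely on the cycle side, citing \Cref{cor_harmonic_cycle}, $\lambda\circ\lambda = k^i(X)k_i(X)\lvert\WH_{i-1}(X)\rvert^2$, and cancelling $\lvert\WH_{i-1}(X)\rvert^2$ directly. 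Both arguments are one-line cancellations and both rest on results the paper has already established; the two are equivalent in substance, since \Cref{thm_normal_def} is itself proved by comparing \Cref{cor_harmonic_cycle} with \Cref{cor_harmonic_cocycle}. What your version buys is economy: it needs neither the cocycle machinery nor the sign ambiguity in \Cref{thm_normal_def}. What the paper's version buys is emphasis: routing through $\lambda^*$ exhibits $\overline{\lambda}$ as the common normalization of both the harmonic cycle and the harmonic cocycle, which is the duality point the section is organized around. Your attention to the legitimacy of the division is well placed and correctly resolved: under \Cref{condition}, $\rk\WH_{i-1}(X)=0$ forces $\WH_{i-1}(X)$ to be finite, so $\lvert\WH_{i-1}(X)\rvert\geq 1$ and the normalization is well defined.
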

\begin{proof}
Use \Cref{cor_harmonic_cocycle} and \Cref{thm_normal_def}.
\end{proof}

It is worth noting that a similar form can be seen in \cite{Ca2, L}. However, the result is usually focused on the case when $\rk\WH_i(X)=0$, whereas we have $\rk\WH_i(X)=1$ under \Cref{condition}.

\begin{remark}
We may summarize the relationships among the previous topological and combinatorial objects in the following diagram.
\begin{center}
\begin{tikzpicture}
  \matrix (m) [matrix of math nodes,row sep=3em,column sep=4em,minimum width=2em]
  {
     \mathcal{T}_i & \mathcal{U}^i  \\
     \mathcal{U}_i & \mathcal{T}^i  \\};
  \path[-stealth]
    (m-1-1) edge [thin] node [left] {$add$} (m-2-1)
    (m-2-2) edge [thin] node [right] {$add$} (m-1-2)
    (m-1-1) edge [dashed,-] (m-2-2)
    (m-1-2) edge [dashed,-] (m-2-1);
  \draw[<->] 
    (m-1-1) edge node [above] {$complement$} (m-1-2)
    (m-2-1) edge node [below] {$complement$}(m-2-2);
\end{tikzpicture}
\end{center}
The dotted lines represent conceptual duality between objects, which appears when we use $\delta$ instead of $\partial$. The arrows with `add' are surjective maps, and they correspond to adding exactly one cell to a (dual) spanning tree. Finally, `complement' means the complement operator on $\mathcal{S}_i$.
\end{remark}






\section*{Acknowledgment} Y.-J. Kim was supported by NRF(National Research Foundation of Korea) Grant funded by Korean Government (NRF-2015-Global Ph.D. Fellowship Program). W. Kook was supported by the National Research Foundation of Korea (NRF) Grant funded by the Korean Government (MSIP) (No.2017R1A5A1015626 and 2018R1A2A3075511).

\end{document}